\theoremstyle{plain}
\newtheorem{theorem}{Theorem}[section]
\newtheorem{lemma}[theorem]{Lemma}
\newtheorem{corollary}[theorem]{Corollary}
\newtheorem{proposition}[theorem]{Proposition}
\theoremstyle{definition}
\newtheorem{definition}[theorem]{Definition}
\newtheorem{remark}[theorem]{Remark}
\numberwithin{equation}{section}
\DeclareMathOperator{\supp}{supp}
\DeclareMathOperator*{\essliminf}{ess\,lim\,inf}
\title[Liouville theorem for singular solutions to nonlocal equations]{Liouville theorem for singular solutions to nonlocal equations}
\author{Minhyun Kim}
\address{Department of Mathematics \& Research Institute for Natural Sciences, Hanyang University, 04763 Seoul, Republic of Korea}
\email{minhyun@hanyang.ac.kr}
\author{Se-Chan Lee}
\address{School of Mathematics, Korea Institute for Advanced Study, 02455 Seoul, Republic of Korea}
\email{sechan@kias.re.kr}
\subjclass[2020]{35A21, 35A08, 35B53, 35R09}
\keywords{
B\^ocher theorem, Liouville theorem, nonlocal equation, singularity}
\thanks{M. Kim is supported by the National Research Foundation of Korea (NRF) grant funded by the Korean government (MSIT) (RS-2023-00252297). S.-C. Lee is supported by the KIAS Individual Grant (No. MG099001) at the Korea Institute for Advanced Study.}
\begin{document}

	\begin{abstract}
        We study singular solutions to the fractional Laplace equation and, more generally, to nonlocal linear equations with measurable kernels. We establish B\^ocher type results that characterize the behavior of singular solutions near the singular point. In addition, we prove Liouville theorems for singular solutions. To this end, we construct fundamental solutions for nonlocal linear operators and establish a localized comparison principle.
	\end{abstract}
	
	\maketitle

\section{Introduction}

The analysis of singularities in solutions to partial differential equations (PDEs) is a central topic in regularity theory. A fundamental question in this area is whether solutions to a given PDE are always smooth or can develop singularities. This question arises in a wide range of mathematical and physical contexts, including Hilbert’s nineteenth problem, singular harmonic maps, the Navier--Stokes equations in fluid dynamics, and the Poincar\'e conjecture via Ricci flow.

For the Laplace equation, one of the most classical and fundamental PDEs, the classification and asymptotic behavior of singular solutions are now well understood. For example, the early work of B\^ocher~\cite{Boc03} in 1903 established that any nonnegative singular harmonic function in a punctured ball (in dimension greater than or equal to two) can be represented as a linear combination of the fundamental solution and a harmonic function in the ball. By applying the Kelvin transform, he also showed that any nonnegative harmonic function in an exterior domain approaches a finite limit at infinity, provided that the dimension is greater than two. As a consequence of B\^ocher's result, the following Liouville theorem for singular harmonic solutions holds. See also Picard~\cite{Pic24}, Kellogg~\cite{Kel26}, and Raynor~\cite{Ray26}.

\begin{theorem}\cite{Boc03}\label{thm-sing-Liouv-Lap}
Assume that $n>2$. Let $u$ be harmonic in $\mathbb{R}^n \setminus \{0\}$. Suppose that $u$ is bounded from above or below in a neighborhood of $0$ and that $u$ is bounded from above or below in a neighborhood of infinity. Then
\begin{equation*}
u=a|x|^{2-n}+b
\end{equation*}
for some constants $a, b \in \mathbb{R}$.
\end{theorem}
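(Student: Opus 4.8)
My plan is to combine a B\^ocher-type analysis at the singular point $0$ with a Liouville argument at infinity. First I would reduce: replacing $u$ by $-u$ if necessary, assume $u$ is bounded from below in a punctured neighborhood of $0$, and after adding a constant assume $u\ge 0$ on $B_{r_0}\setminus\{0\}$ for some $r_0>0$. Since $u$ is harmonic on the connected set $\mathbb{R}^n\setminus\{0\}$, integrating $\Delta u=0$ over annuli shows that the flux $\int_{\partial B_\rho}\partial_\nu u\,d\sigma$ is independent of $\rho$; differentiating the spherical average $\phi(\rho):=|\partial B_\rho|^{-1}\int_{\partial B_\rho}u\,d\sigma$ then gives $\phi'(\rho)=c\,\rho^{1-n}$ for a constant $c$, and integrating (here $n>2$ enters, so that a primitive of $\rho^{1-n}$ is a multiple of $\rho^{2-n}$ rather than $\log\rho$) yields $\phi(\rho)=a\rho^{2-n}+b$ for all $\rho\in(0,\infty)$ and suitable constants $a,b\in\mathbb{R}$. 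Boundedness of $u$ from below near $0$ forces $a\ge 0$, since $a<0$ would give $\phi(\rho)\to-\infty$ as $\rho\to 0^+$.

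The core step is to show that $v:=u-a|x|^{2-n}$, which is harmonic on $\mathbb{R}^n\setminus\{0\}$ and has spherical average identically equal to $b$, extends to a harmonic function on all of $\mathbb{R}^n$. I would first invoke Harnack's inequality on dyadic annuli around $0$ (legitimate since $u\ge 0$ there) to obtain $\max_{\partial B_\rho}u\le C\min_{\partial B_\rho}u\le C\phi(\rho)$, and hence $v(x)=O(|x|^{2-n})$ as $x\to 0$. Then, expanding $v$ in $B_{r_0}\setminus\{0\}$ by separation of variables, $v(r\theta)=\sum_{k\ge 0}\sum_m(\alpha_{k,m}r^k+\beta_{k,m}r^{2-n-k})Y_{k,m}(\theta)$ with $Y_{k,m}$ spherical harmonics, the constancy of the spherical average kills the degree-zero singular coefficient $\beta_{0,0}$, while projecting onto $Y_{k,m}$ for $k\ge 1$ gives $|\alpha_{k,m}\rho^k+\beta_{k,m}\rho^{2-n-k}|=O(\rho^{2-n})$, which forces $\beta_{k,m}=0$ because $2-n-k<2-n$. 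Thus $v$ carries no singular modes and extends harmonically across $0$, with value $b$ at the origin by the mean value property. (Alternatively, one can check that the lower semicontinuous regularization of $u$ at $0$ is superharmonic and apply the Riesz decomposition together with the local structure of the fundamental solution.)

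Let $\tilde v$ denote the harmonic extension of $v$ to $\mathbb{R}^n$. On $\mathbb{R}^n\setminus B_R$ one has $0\le a|x|^{2-n}\le aR^{2-n}$, so $\tilde v=u-a|x|^{2-n}$ inherits from $u$ the property of being bounded from above or below outside some large ball; being continuous, $\tilde v$ is also bounded on $\overline{B_R}$, hence bounded on one side on all of $\mathbb{R}^n$. By the Liouville theorem for one-sidedly bounded harmonic functions (which follows from Harnack on large balls and the classical Liouville theorem), $\tilde v$ is constant, and that constant equals its spherical average $b$. Therefore $v\equiv b$ and $u=a|x|^{2-n}+b$ on $\mathbb{R}^n\setminus\{0\}$, as claimed.

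I expect the local step of the second paragraph to be the main obstacle. Its delicate point is that the boundedness hypothesis at $0$ fixes the candidate coefficient $a$ only through the spherical average, while $|x|^{2-n}$ is precisely the critical growth rate at $0$; removability of the singularity of $v$ therefore genuinely relies on the \emph{uniform} bound $v=O(|x|^{2-n})$ coming from Harnack, and neither a one-sided bound nor control along a single sequence $x_j\to 0$ would suffice — for instance, $|x|^{1-n}$ times a spherical harmonic of degree one is $O(|x|^{2-n})$ (indeed, zero) along its nodal directions, yet has a non-removable singularity. Finally, the hypothesis $n>2$ is used twice: $|x|^{2-n}$ must blow up at $0$ — to fix the sign of $a$ and to be the relevant singular mode — and must decay at infinity, so that the one-sided bound on $u$ transfers to $\tilde v$.
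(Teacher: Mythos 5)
The paper itself does not prove Theorem~\ref{thm-sing-Liouv-Lap}: it is quoted from B\^ocher~\cite{Boc03} as classical background, so there is no internal proof to compare against line by line. Your argument is correct as written, and its architecture is exactly the one the paper follows for the nonlocal generalizations (Theorems~\ref{thm-sing-Liouv-s} and~\ref{thm-sing-Liouv-L}): determine the coefficient $a$ of the fundamental solution from the one-sided bound at the origin, show that $u-a|x|^{2-n}$ extends harmonically across $0$, and then apply a one-sided Liouville theorem after transferring the bound at infinity to the regularized function. Where you differ is in the local implementation, which is necessarily classical: you get $a$ from the constancy of the flux and the ODE for the spherical average, and you prove removability of the singularity of $v$ via the Harnack bound $u=O(|x|^{2-n})$ on dyadic annuli combined with the Laurent expansion in spherical harmonics (correctly noting that the uniform two-sided bound, not a one-sided or sequential one, is what kills the singular modes $r^{2-n-k}$, $k\ge 1$). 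The paper's nonlocal substitutes for these steps are the weak B\^ocher Theorem~\ref{thm-weak-Bocher} (which produces $a$ as the value of a bilinear form on cut-offs), the Caccioppoli--Cimmino--Weyl lemma or, for measurable kernels, the isolated singularity theorem together with a localized comparison principle (which replace your expansion argument), and Theorem~\ref{thm-Liouv-weak} (which replaces the classical Liouville theorem). Your proof is thus a clean, self-contained local proof whose skeleton matches the paper's nonlocal program; the only points worth making explicit if you write it up are that the Harnack chain on $\partial B_\rho$ uses connectedness of the annulus (hence $n\ge 2$) and that the reduction ``replace $u$ by $-u$'' only normalizes the side of the bound at the origin, the bound at infinity being allowed to be from either side throughout.
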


An extension of Theorem~\ref{thm-sing-Liouv-Lap} to certain linear equations can also be found in \cite{Boc03}. Gilbarg--Serrin~\cite{GS56} carried out a comprehensive study of fundamental solutions and isolated singularities for second-order linear elliptic equations. Building on this, Serrin~\cite{Ser64a,Ser65a} published a series of influential papers on singular solutions of general quasilinear divergence form equations, including the $p$-Laplace equation as a model case. For further developments on quasilinear equations, we refer the reader to Serrin~\cite{Ser65b}, Kichenassamy--V\'eron~\cite{KV86}, V\'eron~\cite{Ver96}, and Nicolosi--Skrypnik--Skrypnik~\cite{NSS03}. There are also results on equations in non-divergence form; see Labutin~\cite{Lab00,Lab01} for Pucci’s extremal equations and Armstrong--Sirakov--Smart~\cite{ASS11} for fully nonlinear equations. In particular, the results in \cite{ASS11} extend Theorem~\ref{thm-sing-Liouv-Lap} to the cases $n=1$ and $n=2$ with the corresponding fundamental solutions. These classical results provide a foundational framework for the characterization of singularities in local elliptic equations.

On the other hand, nonlocal equations such as the fractional Laplace equation have attracted considerable attention in recent decades as they arise naturally in modeling various phenomena described by nonlocal interactions. For instance, the fractional Laplacian is the infinitesimal generator of a stable L\'evy jump process. Moreover, it plays a crucial role in the study of the Boltzmann equation as it can approximate the Boltzmann collision operator; see Silvestre~\cite{Sil16}. Furthermore, there are a number of significant applications in materials science \cite{Bat06,BC99}, soft thin films \cite{Kur06}, quasi-geostrophic dynamics \cite{CV10}, and image processing \cite{GO08}.

While the regularity theory for nonlocal equations is now well established---thanks to the nonlocal extensions of the classical De Giorgi--Nash--Moser theory for both elliptic \cite{Coz17,DCKP14,DCKP16,Kas09} and parabolic \cite{CCV11,FK13,KW24b} problems---the study of singular solutions is still in its early stage. The aim is to extend the classical program of analyzing singular solutions to the fractional Laplace equation and, more generally, to nonlocal linear equations. In this direction, B\^ocher type results characterizing nonnegative singular solutions near the origin have been obtained; see Li--Wu--Xu~\cite{LWX18}, Li--Liu--Wu--Xu \cite{LLWX20}, and Klimsiak~\cite{Kli25}. Moreover, the isolated singularity theorem and removable singularity theorem for nonlocal nonlinear equations, extending Serrin's works~\cite{Ser64a,Ser65a,Ser65b}, have been studied by the authors in \cite{KL24} and by the first author in \cite{Kim25}.

In this work, we take up the study of singular solutions, focusing on Liouville type theorems. Our goal is to establish the nonlocal counterpart of Theorem~\ref{thm-sing-Liouv-Lap} for nonlocal linear equations with measurable kernels. We begin with the simplest case, where the operator is given by the fractional Laplacian. The fractional Laplacian of order $s \in (0,1)$ is defined by
\begin{equation*}
(-\Delta)^s u(x) = \frac{4^{s}\Gamma(\frac{n}{2}+s)}{\pi^{n/2}|\Gamma(-s)|} \,\mathrm{p.v.} \int_{\mathbb{R}^n} \frac{u(x)-u(y)}{|x-y|^{n+2s}} \,\mathrm{d}y,
\end{equation*}
where $\Gamma$ is the gamma function. Throughout the paper, we always assume that $n \geq 2$. It is well known that the function
\begin{equation}\label{eq-FS}
\Phi_s(x) \coloneqq \frac{\Gamma(\frac{n}{2}-s)}{4^{s}\pi^{n/2}\Gamma(s)} |x|^{2s-n}
\end{equation}
is the fundamental solution for the fractional Laplacian, that is, $(-\Delta)^s\Phi_s = \delta_0$ in $\mathscr{D}'(\mathbb{R}^n)$ (in the sense of Definition~\ref{def-distribution}), where $\delta_0$ is the Dirac delta at the origin; see, for instance, classical works of Riesz~\cite{Rie49} and Blumenthal--Getoor--Ray~\cite{BGR61}; the standard monograph by Samko--Kilbas--Marichev~\cite{SKM93}; and more recent expositions such as Bucur~\cite[Theorem~2.3]{Buc16}, Garofalo~\cite[Theorem~8.4]{Gar19}, and Fern\'andez-Real--Ros-Oton~\cite[Lemma~1.10.9]{FRRO24}.

Our first result is the following Liouville theorem for singular solutions to the fractional Laplace equation, which extends both the classical Liouville theorem (see Theorem~\ref{thm-Liouville-s}) for globally nonnegative solutions and B\^ocher's Theorem~\ref{thm-sing-Liouv-Lap}. We refer the reader to Definition~\ref{def-harmonic} for the precise definition of a $(-\Delta)^s$-harmonic function.

\begin{theorem}\label{thm-sing-Liouv-s}
Let $u$ be $(-\Delta)^s$-harmonic in $\mathbb{R}^n \setminus \{0\}$. Suppose that $u$ is bounded from above or below in a neighborhood of $0$ and that $u$ is bounded from above or below in a neighborhood of infinity. Then
\begin{equation*}
u=a|x|^{2s-n}+b
\end{equation*}
for some constants $a, b \in \mathbb{R}$.
\end{theorem}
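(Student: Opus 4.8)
The plan is to reduce, by means of a B\^ocher-type theorem at the origin, to the classical Liouville theorem for globally nonnegative $(-\Delta)^s$-harmonic functions (Theorem~\ref{thm-Liouville-s}). Since $-u$ is again $(-\Delta)^s$-harmonic in $\mathbb{R}^n\setminus\{0\}$ and the conclusion is invariant under $u\mapsto -u$, we may assume that $u$ is bounded from below in a punctured neighborhood of $0$; by interior regularity $u$ is continuous on $\mathbb{R}^n\setminus\{0\}$, and it still is bounded from above or from below near infinity.

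The main step is a B\^ocher-type statement: if $u$ is $(-\Delta)^s$-harmonic in $B_R\setminus\{0\}$ and bounded from below in $B_R$, then $w:=u-a|x|^{2s-n}$ extends to a $(-\Delta)^s$-harmonic function in the whole ball $B_R$, for a suitable $a\in\mathbb{R}$. To prove this, after adding a constant---which does not change $(-\Delta)^su$ in $\mathscr{D}'(B_R)$, since $\int_{\mathbb{R}^n}(-\Delta)^s\varphi=0$ for $\varphi\in C_c^\infty$---one may assume $u\ge0$ in $B_R$. Comparing $u$ with small multiples $\varepsilon\Phi_s$ of the fundamental solution on annuli $B_R\setminus B_\rho$ and letting $\rho\to0$, the localized comparison principle shows that $u$ is $s$-superharmonic in $B_R$, so that $(-\Delta)^su=\mu$ in $\mathscr{D}'(B_R)$ for a nonnegative Radon measure $\mu$ on $B_R$; since $u$ is $(-\Delta)^s$-harmonic away from $0$, the measure $\mu$ is supported in $\{0\}$, hence $\mu=c\,\delta_0$ with $c\ge0$. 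Setting $a:=c\,\Gamma(\tfrac{n}{2}-s)/(4^s\pi^{n/2}\Gamma(s))$ gives $(-\Delta)^s(u-a|x|^{2s-n})=0$ in $\mathscr{D}'(B_R)$, and a Weyl-type regularity lemma then shows that $w=u-a|x|^{2s-n}$ is $(-\Delta)^s$-harmonic, in particular continuous, in $B_R$.

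It remains to conclude. Because $(-\Delta)^s(|x|^{2s-n})=0$ pointwise in $\mathbb{R}^n\setminus\{0\}$ and $u$ is $(-\Delta)^s$-harmonic there, the function $w=u-a|x|^{2s-n}$ is $(-\Delta)^s$-harmonic on $(\mathbb{R}^n\setminus\{0\})\cup B_R=\mathbb{R}^n$, hence continuous on $\mathbb{R}^n$. As $|x|\to\infty$ one has $a|x|^{2s-n}\to0$, so $w$ inherits the one-sided bound of $u$ near infinity; together with the local boundedness of the continuous function $w$ on the complementary compact region (and near $0$), this makes $w$ bounded from above or from below on all of $\mathbb{R}^n$. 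Then $\pm w$ plus a suitable constant is a nonnegative $(-\Delta)^s$-harmonic function on $\mathbb{R}^n$, hence constant by Theorem~\ref{thm-Liouville-s}; so $w\equiv b$ for some $b\in\mathbb{R}$, that is, $u=a|x|^{2s-n}+b$.

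The main obstacle is the B\^ocher step: isolating the exact leading profile $a|x|^{2s-n}$ at the isolated singularity from merely a one-sided bound, for an operator whose value at a point depends on the global shape of $u$. This forces one to work with the distributional formulation and to prove that the singular part of $(-\Delta)^su$ near $0$ is precisely a multiple of $\delta_0$ and nothing more singular, for which the explicit fundamental solution $\Phi_s$ and the localized comparison principle are indispensable; checking that adding constants and patching the two regions of harmonicity are admissible in the weighted $L^1$ tail space underlying Definition~\ref{def-harmonic} is routine bookkeeping. Alternatively, the behavior near infinity could be handled via the Kelvin transform, reducing it to a second application of the B\^ocher theorem at the origin, but the argument above avoids that.
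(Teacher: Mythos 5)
Your proof follows the same overall architecture as the paper's: a B\^ocher step at the origin identifying the singular part of $(-\Delta)^s u$ as $c\,\delta_0$, subtraction of $c\,\Phi_s$, the nonlocal Weyl lemma to regularize the difference, and the Liouville theorem for entire $s$-harmonic functions bounded from one side. The difference lies in how the B\^ocher step is executed. The paper invokes its Theorem~\ref{thm-weak-Bocher}, which is proved for general kernels by testing the bilinear form against the modified cutoffs $\varphi_r=(1-\eta_r)\varphi+\eta_r\varphi(0)$ and using Lemma~\ref{lem-const}; this yields the identity $\mathcal{E}(u,\varphi)=\bar a\varphi(0)$ directly, and only then is it converted into the distributional statement $(-\Delta)^su=\bar a\delta_0$ in $\mathscr{D}'(\mathbb{R}^n)$. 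You instead propose the more potential-theoretic route: show $u$ is $s$-superharmonic across the origin, invoke a Riesz-type representation so that $(-\Delta)^su=\mu$ for a nonnegative Radon measure, and observe that a nonnegative measure supported at a point is $c\,\delta_0$. For the fractional Laplacian this is a legitimate alternative (it is essentially the route of Li--Wu--Xu), and it buys a shorter argument at the price of quoting the Riesz decomposition for $s$-superharmonic functions, which the paper's bilinear-form argument avoids and which does not generalize to measurable kernels.

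One step of your sketch is imprecise and worth fixing: you assert that ``comparing $u$ with small multiples $\varepsilon\Phi_s$ on annuli $B_R\setminus B_\rho$ and letting $\rho\to0$, the localized comparison principle shows that $u$ is $s$-superharmonic in $B_R$.'' Comparison with $\varepsilon\Phi_s$ on shrinking annuli is the mechanism for pinning down the \emph{coefficient} $a$ (as in the proof of Theorem~\ref{thm-Bocher}) or for the lower bound in the isolated singularity dichotomy; it does not by itself establish superharmonicity across the puncture, which requires the comparison property against solutions in \emph{all} subdomains containing the origin. The paper obtains this via truncation: $\min\{u,j\}$ is a weak supersolution across the origin (Lemma~\ref{lem-supersolution}, relying on \cite{KL24}), and then the lsc-regularizations increase to an $s$-superharmonic function (Lemma~\ref{lem-superharmonic}). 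With that substitution your argument closes; the remaining steps (harmlessness of adding constants, patching harmonicity on $(\mathbb{R}^n\setminus\{0\})\cup B_R$, one-sided global boundedness of $w$, and the reduction of Theorem~\ref{thm-Liouville-s} to the one-side-bounded case) are all correct and coincide with the paper's.
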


The main ingredients in the proof of Theorem~\ref{thm-sing-Liouv-s} are a nonlocal Caccioppoli--Cimmino--Weyl lemma (Theorem~\ref{thm-Weyl}), the classical Liouville Theorem~\ref{thm-Liouville-s} for the fractional Laplacian, and a weak B\^ocher type Theorem~\ref{thm-weak-Bocher}, which we establish below in a more general setting.

The next goal of this paper is to prove a Liouville theorem for more general nonlocal linear operators of the form
\begin{equation}\label{eq-L}
\mathcal{L}u(x) = 2\,\mathrm{p.v.} \int_{\mathbb{R}^n} (u(x)-u(y)) k(x, y)\,\mathrm{d}y,
\end{equation}
where $k: \mathbb{R}^n \times \mathbb{R}^n \to [0, \infty]$ is a measurable kernel satisfying the ellipticity and symmetry conditions
\begin{equation*}
\frac{\Lambda^{-1}}{|x-y|^{n+2s}} \leq k(x, y) = k(y, x) \leq \frac{\Lambda}{|x-y|^{n+2s}}, \quad x, y \in \mathbb{R}^n,
\end{equation*}
with $\Lambda \geq 1$. Note that $\mathcal{L}=(-\Delta)^s$ when the kernel $k$ is given by
\begin{equation*}
k(x, y) = \frac{2^{2s-1}\Gamma(\frac{n}{2}+s)}{\pi^{n/2}|\Gamma(-s)|}|x-y|^{-n-2s}.
\end{equation*}
Although Theorem~\ref{thm-sing-Liouv-L}, a Liouville theorem for singular solutions of $\mathcal{L}u=0$, can be regarded as an extension of Theorem~\ref{thm-sing-Liouv-s}, its proof is significantly more challenging. One of the main reasons is the absence of a nonlocal Caccioppoli--Cimmino--Weyl lemma for $\mathcal{L}$; as in the local case\footnote{The classical Caccioppoli--Cimmino--Weyl lemma~\cite{Cac38,Cim37,Cim38,Wey40} states that if $u \in \mathscr{D}'(\Omega)$ satisfies $\Delta u=0$ in $\mathscr{D}'(\Omega)$, then $u \in C^\infty(\Omega)$ and $\Delta u=0$ pointwise in $\Omega$. However, such a hypoellipticity fails in general for second-order elliptic equations with bounded measurable coefficients; see Serrin~\cite{Ser64b}.}, such a result is not expected to hold for general nonlocal operators $\mathcal{L}$ with measurable kernels $k$.

The first step in proving the Liouville theorem is to find the fundamental solution for $\mathcal{L}$ and to investigate its asymptotic behavior both near the origin and at infinity. While the fundamental solution for the fractional Laplacian is defined as the distributional solution of $(-\Delta)^s \Phi_s = \delta_0$ in $\mathscr{D}'(\mathbb{R}^n)$, this definition is not appropriate for operators $\mathcal{L}$ with measurable kernels $k$, because $\mathcal{L}\varphi$ is not well defined in general even for nice test functions $\varphi \in C^\infty_c(\mathbb{R}^n)$. However, the following theorem provides a precise framework for interpreting $\mathcal{L}\Phi=\delta_0$ in this more general situation. See Definition~\ref{def-harmonic} for the precise definition of an $\mathcal{L}$-harmonic function.

\begin{theorem}\label{thm-weak-Bocher}
Let $\Omega \subset \mathbb{R}^n$ be an open set containing the origin. Let $u$ be $\mathcal{L}$-harmonic in $\Omega \setminus \{0\}$ and bounded from below in $\Omega$. Then there exists a constant $a \geq 0$ such that
\begin{equation}\label{eq-delta}
	\mathcal{L}u=a\delta_0 \quad\text{in }\Omega
\end{equation}
in the sense that
\begin{equation}\label{eq-delta-weak}
	\int_{\mathbb{R}^n} \int_{\mathbb{R}^n} (u(x)-u(y))(\varphi(x)-\varphi(y)) k(x, y) \,\mathrm{d}y\,\mathrm{d}x=a\varphi(0)
\end{equation}
for all $\varphi \in C_c^{\infty}(\Omega)$.
\end{theorem}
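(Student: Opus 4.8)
The plan is to recognise the bilinear expression in \eqref{eq-delta-weak}, tested against $u$, as a distribution on $\Omega$ supported at the single point $\{0\}$, to prove it has order zero, and to fix the sign of its mass from the lower bound on $u$.

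\emph{Step 1 (reduction and well-posedness).} Since $\mathcal{L}$ annihilates constants, replacing $u$ by $u-\inf_\Omega u$ changes neither the hypotheses nor \eqref{eq-delta-weak}, so I may assume $u\ge 0$ in a fixed ball $B_{r_0}\subset\Omega$. Write $\mathcal{E}(v,\varphi):=\int_{\mathbb{R}^n}\int_{\mathbb{R}^n}(v(x)-v(y))(\varphi(x)-\varphi(y))k(x,y)\,\mathrm{d}y\,\mathrm{d}x$. Being $\mathcal{L}$-harmonic in $\Omega\setminus\{0\}$, $u$ has finite tail $\int_{\mathbb{R}^n}|u(y)|(1+|y|)^{-n-2s}\,\mathrm{d}y<\infty$ and lies in $H^s_{\mathrm{loc}}(\Omega\setminus\{0\})$; combined with the isolated-singularity bound $0\le u(x)\le C|x|^{2s-n}$ near the origin (for nonnegative $\mathcal{L}$-harmonic functions in a punctured ball) and the interior H\"older estimate on balls $B_\rho(x_0)\Subset\Omega\setminus\{0\}$, this yields, on annuli about the origin, $|u(x)-u(y)|\le C|x|^{2s-n-\alpha}|x-y|^\alpha$ whenever $|x-y|<|x|/4$. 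Using this and $0<k(x,y)\le\Lambda|x-y|^{-n-2s}$, I would check that $\mathcal{E}(u,\varphi)$ is an absolutely convergent double integral for every $\varphi\in C_c^\infty(\Omega)$, with $|\mathcal{E}(u,\varphi)|\le C(\sup|\varphi|+\sup|\nabla\varphi|)$ for $\varphi$ supported in any fixed compact subset of $\Omega$. Hence $T(\varphi):=\mathcal{E}(u,\varphi)$ defines a distribution $T\in\mathscr{D}'(\Omega)$ of order at most $1$.

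\emph{Step 2 ($T$ lives at the origin).} For $\varphi\in C_c^\infty(\Omega\setminus\{0\})$ the weak formulation of $\mathcal{L}u=0$ in $\Omega\setminus\{0\}$ gives $T(\varphi)=0$, so $\supp T\subseteq\{0\}$, and by the structure theorem for distributions supported at a point, $T=\sum_{|\alpha|\le N}c_\alpha\partial^\alpha\delta_0$ (with $N\le 1$ by Step 1). It then remains to show $N=0$ and $c_0\ge 0$, after which \eqref{eq-delta}--\eqref{eq-delta-weak} hold with $a:=c_0$.

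\emph{Step 3 (order zero --- the crux).} Suppose $N\ge 1$. For $\eta\in C_c^\infty(B_1)$ put $\varphi_\varepsilon:=\eta(\cdot/\varepsilon)\in C_c^\infty(B_\varepsilon)\subset C_c^\infty(\Omega)$ for $\varepsilon<r_0$, so that
\[
T(\varphi_\varepsilon)=\sum_{|\alpha|\le N}c_\alpha\,\varepsilon^{-|\alpha|}(\partial^\alpha\eta)(0).
\]
Choosing $\eta$ so that the top-order part $\sum_{|\alpha|=N}c_\alpha(\partial^\alpha\eta)(0)$ does not vanish (possible, as it is a nonzero symbol), we get $|T(\varphi_\varepsilon)|\gtrsim\varepsilon^{-N}\to\infty$ as $\varepsilon\to 0$. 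The main obstacle is to show that, nonetheless, $|\mathcal{E}(u,\varphi_\varepsilon)|\le C$ uniformly in small $\varepsilon$, which contradicts the above and forces $N=0$. Using the symmetry of $k$ to reduce to $\int_{B_\varepsilon}\int_{\mathbb{R}^n}|u(x)-u(y)|\,|\varphi_\varepsilon(x)-\varphi_\varepsilon(y)|\,k(x,y)\,\mathrm{d}y\,\mathrm{d}x$, I would split the $y$-integral into: (i) the diagonal region $|x-y|<|x|/4$, where the H\"older bound $|u(x)-u(y)|\lesssim|x|^{2s-n-\alpha}|x-y|^\alpha$ against $|\varphi_\varepsilon(x)-\varphi_\varepsilon(y)|\le\varepsilon^{-1}\|\nabla\eta\|_\infty|x-y|$ gives a contribution $\lesssim\varepsilon^{-1}\int_{B_\varepsilon}|x|^{2-n}\,\mathrm{d}x\lesssim\varepsilon$; (ii) the near-origin off-diagonal region $|x-y|\ge|x|/4$, $|y|<2\varepsilon$, where $u(x)+u(y)\lesssim|x|^{2s-n}+|y|^{2s-n}$ together with the gradient bound on $\varphi_\varepsilon$ makes the powers of $\varepsilon$ cancel, leaving an $O(1)$ term; (iii) the far region $|y|\ge 2\varepsilon$, where $\varphi_\varepsilon(y)=0$, $k(x,y)\lesssim|y|^{-n-2s}$, and the contribution is $\lesssim\varepsilon^{n}\big(\varepsilon^{-n}+\int_{|y|\ge2\varepsilon}|u(y)||y|^{-n-2s}\,\mathrm{d}y\big)\lesssim 1$ after invoking the tail bound. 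The delicate point is that these stay bounded precisely because $\mathcal{L}$ has order $2s<2$ and $u$ grows no faster than $|x|^{2s-n}$; the borderline range $s\ge\tfrac12$ in particular forces one to use the gradient bound on $\varphi_\varepsilon$, rather than its sup norm, throughout the near-origin regime.

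\emph{Step 4 (sign of $a$).} With $T=a\delta_0$ in hand, it remains to prove $a\ge 0$ from the lower boundedness of $u$. Fix $B_r\Subset\Omega$, let $h$ be the $\mathcal{L}$-harmonic replacement of $u$ in $B_r$ (equal to $u$ outside $B_r$), which is bounded near the origin by the local boundedness estimate for $\mathcal{L}$-harmonic functions, and set $w:=u-h$; then $w=0$ outside $B_r$ and $\mathcal{E}(w,\varphi)=\mathcal{E}(u,\varphi)=a\varphi(0)$ for $\varphi\in C_c^\infty(B_r)$. By uniqueness for the nonlocal Dirichlet problem with a point mass, $w$ equals $a$ times the Green function of $\mathcal{L}$ in $B_r$ with pole at $0$, which tends to $+\infty$ at the origin; since $u=h+w$ is bounded below while $h$ is bounded, $a\ge 0$. (Equivalently, a function bounded below and $\mathcal{L}$-harmonic off the polar set $\{0\}$ extends to an $\mathcal{L}$-superharmonic function on $\Omega$, whose Riesz measure --- which is exactly $T$ --- is a nonnegative Radon measure supported at $\{0\}$, hence $a\delta_0$ with $a\ge 0$.) This completes the argument; the only substantial step is the uniform bound of Step 3.
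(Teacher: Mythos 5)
Your overall architecture (view $\varphi\mapsto\mathcal{E}(u,\varphi)$ as a distribution supported at the origin, kill the order-one part by scaling, then fix the sign) is a reasonable repackaging of what the paper does, but the step on which everything hinges --- the absolute convergence of the near-diagonal part of $\mathcal{E}(u,\varphi)$ near the origin --- is not actually secured by the tools you invoke. Your only control of $|u(x)-u(y)|$ for $x,y$ close to each other and to $0$ is the rescaled interior H\"older estimate $|u(x)-u(y)|\lesssim |x|^{2s-n-\alpha}|x-y|^{\alpha}$ for $|x-y|<|x|/4$. Feeding this and $|\varphi(x)-\varphi(y)|\lesssim|x-y|$ into $k(x,y)\lesssim|x-y|^{-n-2s}$ gives an inner integral $\int_0^{|x|/4}t^{\alpha-2s}\,\mathrm{d}t$, which converges only if $\alpha>2s-1$. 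For the operators $\mathcal{L}$ with merely measurable kernels considered here, the De Giorgi--Nash--Moser exponent $\alpha=\alpha(n,s,\Lambda)$ can be arbitrarily small, so for $s>\tfrac12$ your Step~1 bound and the region~(i) estimate of Step~3 simply do not follow; no interpolation between the sup bound $u\lesssim|x|^{2s-n}$ and the small-exponent H\"older bound rescues the borderline you yourself flag. The paper's proof avoids this by first proving that $u$ extends to an $\mathcal{L}$-superharmonic function on all of $\Omega$ (Lemma~\ref{lem-superharmonic}, which itself rests on the nontrivial fact that the truncations $\min\{u,j\}$ are weak supersolutions across the origin, Lemma~\ref{lem-supersolution}), and then invoking Theorem~\ref{thm-integrability} to get $u\in W^{\sigma,1}_{\mathrm{loc}}(\Omega)$ for any $\sigma\in(0,s)$; choosing $\sigma\geq 2s-1$ is exactly what makes the near-diagonal integral $\int\!\!\int|u(x)-u(y)|\,|x-y|^{-(n+2s-1)}$ finite. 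This Sobolev-scale regularity across the singularity is the missing ingredient in your argument, and without it the distribution $T$ is not even well defined on test functions that are non-constant near $0$. (Incidentally, in region (i) the correct power is $|x|^{1-n}$, giving an $O(1)$ rather than $O(\varepsilon)$ contribution --- harmless for your purpose, but a sign the computation was not pushed through.)

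Step 4 also needs repair. The function $w=u-h$ is not in $V^{s,2}_0(B_r)$ (the profile $|x|^{2s-n}$ fails to be in $W^{s,2}$ near the origin for $n>4s$, and in general $u\notin W^{s,2}_{\mathrm{loc}}$ across $0$), so Lax--Milgram uniqueness for the Dirichlet problem with a point mass does not apply; identifying $w$ with $a$ times the Green function would require a uniqueness theory in the weaker spaces $W^{\sigma,q}$ in the spirit of Littman--Stampacchia--Weinberger, which is a substantial separate result. Your parenthetical alternative --- superharmonic extension plus nonnegativity of the associated measure --- is the right idea and is how the paper proceeds, but note that it presupposes exactly the supersolution-across-the-origin lemma needed to fix the first gap: once $\min\{u,j\}$ is known to be a weak supersolution in $\Omega$, one gets $\mathcal{E}(\min\{u,j\},\varphi)\geq 0$ for $0\leq\varphi\in C^\infty_c(\Omega)$ and passes to the limit, whence $a\geq 0$. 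In short, the proposal identifies the right target estimates but replaces the paper's essential regularity input (superharmonic extension $\Rightarrow$ $W^{\sigma,1}_{\mathrm{loc}}$ with $\sigma>2s-1$) with a pointwise H\"older bound that is too weak for general kernels.
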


B\^ocher type results similar to Theorem~\ref{thm-weak-Bocher} have been obtained for certain classes of nonlocal operators. For example, Li--Wu--Xu~\cite{LWX18} considered the fractional Laplacian, Li--Liu--Wu--Xu \cite{LLWX20} extended this to the fractional Laplacian with a drift term, and Klimsiak~\cite{Kli25} generalized these results to general L\'evy operators with drift. These results differ from Theorem~\ref{thm-weak-Bocher} in two main respects. First, they interpret the equation \eqref{eq-delta} in the distributional sense, which is feasible for L\'evy operators, whereas here it is interpreted in the sense of \eqref{eq-delta-weak}. Second, they assume that $u$ is globally nonnegative in $\mathbb{R}^n$, which is not necessary in Theorem~\ref{thm-weak-Bocher}.

With the aid of Theorem~\ref{thm-weak-Bocher}, we establish the existence, uniqueness, and asymptotic behavior of the fundamental solution for the operator $\mathcal{L}$. Here and after, we write $A \lesssim B$ (resp.\ $A \gtrsim B$) if there is a constant $C>0$ such that $A \leq CB$ (resp.\ $CA \geq B$), and $A \eqsim B$ if $A \lesssim B$ and $A \gtrsim B$. See Definition~\ref{def-superharmonic} for the definition of an $\mathcal{L}$-superharmonic function.

\begin{theorem}[Fundamental solution]\label{thm-FS}
There exists a unique function $\Phi: \mathbb{R}^n \to [0, \infty]$ that satisfies the following properties\/\textup{:}
\begin{enumerate}[(i)]
\item\label{eq-FS1}
$\mathcal{L}\Phi = \delta_0$ in $\mathbb{R}^n$ in the sense that
\begin{equation*}
\int_{\mathbb{R}^n} \int_{\mathbb{R}^n} (\Phi(x)-\Phi(y))(\varphi(x)-\varphi(y)) k(x, y) \,\mathrm{d}y \,\mathrm{d}x= \varphi(0)
\end{equation*}
for all $\varphi \in C^\infty_c(\mathbb{R}^n)$\textup{;}
\item\label{eq-FS2}
$\Phi$ is $\mathcal{L}$-harmonic in $\mathbb{R}^n \setminus \{0\}$ and $\mathcal{L}$-superharmonic in $\mathbb{R}^n$\textup{;}
\item\label{eq-FS3}
$\lim_{x \to 0} \Phi(x) = \infty$ and $\lim_{x \to \infty} \Phi(x) = 0$.
\end{enumerate}
Moreover, it satisfies
\begin{align}\label{eq-FS-bounds}
\begin{split}
\Phi(x) &\lesssim |x|^{2s-n} \quad\text{in }\mathbb{R}^n\text{ and} \\
\Phi(x) &\gtrsim |x|^{2s-n} \quad\text{in a neighborhood of the origin}.
\end{split}
\end{align}
The function $\Phi$ is called the \emph{fundamental solution} for the operator $\mathcal{L}$.
\end{theorem}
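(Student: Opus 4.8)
The plan is to construct $\Phi$ by an exhaustion procedure using the Green functions of large balls, and then to pin down its properties using the earlier results. First I would fix a sequence of balls $B_R$ with $R \to \infty$ and let $G_R(\cdot, 0)$ be the Green function of $\mathcal{L}$ on $B_R$ with pole at the origin, normalized so that $\mathcal{L}G_R(\cdot,0) = \delta_0$ in $B_R$ and $G_R(\cdot,0) \equiv 0$ on $\mathbb{R}^n \setminus B_R$. Existence of such Green functions (and their basic properties: nonnegativity, $\mathcal{L}$-harmonicity away from $0$, $\mathcal{L}$-superharmonicity, and the distributional identity $\iint (G_R(x)-G_R(y))(\varphi(x)-\varphi(y))k(x,y)\,\mathrm{d}y\,\mathrm{d}x = \varphi(0)$ for $\varphi \in C^\infty_c(B_R)$) follows from the standard De Giorgi--Nash--Moser machinery already cited in the paper together with the localized comparison principle referenced in the abstract. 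By the comparison principle, $R \mapsto G_R(x,0)$ is nondecreasing, so I can set $\Phi(x) \coloneqq \lim_{R \to \infty} G_R(x,0) = \sup_R G_R(x,0) \in [0,\infty]$.

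Next I would establish the two-sided bound \eqref{eq-FS-bounds}, which also shows $\Phi$ is finite away from the origin and hence well-defined as a genuine function there. The upper bound $\Phi(x) \lesssim |x|^{2s-n}$ comes from comparing $G_R(\cdot,0)$ with (a multiple of) the fractional-Laplacian fundamental solution $\Phi_s$: since $\mathcal{L}(\Lambda' \Phi_s) \geq \delta_0$ in $B_R$ in the appropriate sense for a suitable constant $\Lambda' = \Lambda'(\Lambda, n, s)$, while $\Lambda' \Phi_s \geq 0 = G_R(\cdot,0)$ outside $B_R$, the comparison principle gives $G_R(\cdot,0) \leq \Lambda' \Phi_s$ uniformly in $R$, hence $\Phi \leq \Lambda' \Phi_s$. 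The lower bound near the origin is obtained symmetrically by comparing from below with a small multiple of $\Phi_s$ on a fixed small ball, using the Harnack inequality for nonnegative $\mathcal{L}$-harmonic functions to control the contribution on the boundary sphere. With the upper bound in hand, passing to the limit $R \to \infty$ in the weak formulation for $G_R$ — using monotone/dominated convergence justified by $G_R \leq \Lambda'\Phi_s \in L^1_{\mathrm{loc}}$ and the Caccioppoli-type energy estimate to handle the double integral — yields property~\eqref{eq-FS1}. Property~\eqref{eq-FS2} follows because local uniform limits (via Harnack/regularity estimates) of $\mathcal{L}$-harmonic functions are $\mathcal{L}$-harmonic, and the superharmonicity passes to the monotone limit directly from the definition in Definition~\ref{def-superharmonic}.

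For property~\eqref{eq-FS3}, the behavior $\lim_{x \to 0}\Phi(x) = \infty$ is immediate from the lower bound in \eqref{eq-FS-bounds}, while $\lim_{x \to \infty}\Phi(x) = 0$ follows from the upper bound $\Phi(x) \leq \Lambda'\Phi_s(x) = \Lambda' c_{n,s}|x|^{2s-n} \to 0$ (here we use $n > 2s$, which holds since $n \geq 2 > 2s$). Finally, uniqueness: if $\Phi_1, \Phi_2$ both satisfy (i)--(iii), then $w \coloneqq \Phi_1 - \Phi_2$ is $\mathcal{L}$-harmonic in $\mathbb{R}^n \setminus \{0\}$ (difference of $\mathcal{L}$-harmonic functions), tends to $0$ at infinity, and by \eqref{eq-FS1} satisfies $\iint (w(x)-w(y))(\varphi(x)-\varphi(y))k(x,y)\,\mathrm{d}y\,\mathrm{d}x = 0$ for all $\varphi \in C^\infty_c(\mathbb{R}^n)$; in particular $w$ has no $\delta_0$ mass, so Theorem~\ref{thm-weak-Bocher} applied to $w$ (and to $-w$, using that $w$ is bounded — indeed bounded near $0$ after noting both $\Phi_i$ blow up at the same rate, which needs a short argument — or more robustly applying the weak Bôcher theorem on a fixed neighborhood of $0$) forces the coefficient $a = 0$, so $w$ extends to an $\mathcal{L}$-harmonic function on all of $\mathbb{R}^n$ vanishing at infinity; the Liouville theorem for globally $\mathcal{L}$-harmonic functions (Theorem~\ref{thm-Liouville-s}, in its $\mathcal{L}$-version) then gives $w \equiv 0$.

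I expect the main obstacle to be the \emph{matching of the blow-up rates at the origin for the uniqueness argument} — i.e., showing that any two fundamental solutions differ by something controlled enough near $0$ that Theorem~\ref{thm-weak-Bocher} can be invoked to kill the $\delta_0$-coefficient. The clean way around this is to avoid subtracting: apply Theorem~\ref{thm-weak-Bocher} directly to each $\Phi_i$ to get $\mathcal{L}\Phi_i = a_i\delta_0$, note $a_i = 1$ from property~\eqref{eq-FS1} (test against a $\varphi$ with $\varphi(0)=1$), and then $w = \Phi_1 - \Phi_2$ satisfies the hypotheses of the $\mathcal{L}$-Liouville theorem after the weak-Bôcher representation shows the singularity is removable. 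A secondary technical point is the careful justification of passing to the limit inside the nonlocal bilinear form defining \eqref{eq-FS1}, where one must split the double integral over $B_r \times B_r$, $B_r \times B_r^c$, and the tail, and use both the energy estimate and the pointwise bound $\Phi \lesssim \Phi_s$ to dominate each piece.
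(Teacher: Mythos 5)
Your overall strategy---exhausting $\mathbb{R}^n$ by balls, taking the Green functions $G^R(\cdot,0)$, passing to a limit, and reading off (i)--(iii) from the limit---is the same as the paper's. However, there are two genuine gaps.

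First, and most seriously, your derivation of the two-sided bound \eqref{eq-FS-bounds} by comparing $G^R(\cdot,0)$ with multiples of $\Phi_s$ does not work for general measurable kernels. The inequality $k(x,y)\leq \Lambda|x-y|^{-n-2s}$ does \emph{not} imply $\mathcal{E}(\Lambda'\Phi_s,\varphi)\geq \varphi(0)$ for nonnegative $\varphi$, because the increments $(\Phi_s(x)-\Phi_s(y))(\varphi(x)-\varphi(y))$ change sign: kernel bounds do not transfer to inequalities between the bilinear forms, so $\Lambda'\Phi_s$ is not a supersolution of $\mathcal{L}u=\delta_0$ (and a small multiple of $\Phi_s$ is not a subsolution). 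This is precisely why the paper does not argue by barriers but instead imports the two-sided near-diagonal Green function estimates of Kassmann--Kim--Lee \cite{KKL23} (Theorem~\ref{thm-Green}), which are a substantial external input proved by De Giorgi--Nash--Moser techniques, and then lets the limit inherit them. Relatedly, the paper avoids your delicate limit passage inside the bilinear form altogether: it first shows (via uniform H\"older estimates, Arzel\`a--Ascoli, and the convergence theorem of \cite{KKP17}) that the limit $\Phi$ is $\mathcal{L}$-harmonic in $\mathbb{R}^n\setminus\{0\}$ and $\mathcal{L}$-superharmonic, and then applies Theorem~\ref{thm-weak-Bocher} directly to $\Phi$ to obtain $\mathcal{E}(\Phi,\varphi)=a\varphi(0)$, with $a>0$ forced by the singularity, normalizing afterwards. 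Your monotone-limit definition of $\Phi$ also needs care (comparison of functions that are singular at the pole), though this is fixable via the regularized Green functions.

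Second, the uniqueness argument is incomplete. Applying Theorem~\ref{thm-weak-Bocher} to each $\Phi_i$ only reproduces the identity \eqref{eq-delta-weak}; it does \emph{not} show that $w=\Phi_1-\Phi_2$ has a removable singularity, because one cannot subtract the two weak identities without knowing a priori that $w\in W^{s,2}$ near the origin --- this is exactly the obstruction the paper highlights after Theorem~\ref{thm-Bocher} and overcomes with the localized comparison principle. The paper accordingly defers uniqueness to Theorem~\ref{thm-sing-Liouv-L}: any competitor $\Phi'$ satisfying (i)--(iii) is $\mathcal{L}$-harmonic in $\mathbb{R}^n\setminus\{0\}$, nonnegative, and tends to $0$ at infinity, hence $\Phi'=a\Phi+b$ by that theorem; the decay forces $b=0$ and property (i) forces $a=1$. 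Your sketch gestures at this but the step ``the weak-B\^ocher representation shows the singularity is removable'' is not justified as stated.
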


We are now ready to state the Liouville theorem for singular solutions to the equation $\mathcal{L}u=0$.

\begin{theorem}\label{thm-sing-Liouv-L}
Let $u$ be $\mathcal{L}$-harmonic in $\mathbb{R}^n \setminus \{0\}$. Suppose that $u$ is bounded from above or below in a neighborhood of $0$ and that $u$ is bounded from above or below in a neighborhood of infinity. Then
\begin{equation*}
u=a\Phi+b
\end{equation*}
for some constants $a, b \in \mathbb{R}$.
\end{theorem}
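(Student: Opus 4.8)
The plan is to reduce Theorem~\ref{thm-sing-Liouv-L} to the case where $u$ is bounded from below near both $0$ and $\infty$, and then exploit the weak B\^ocher Theorem~\ref{thm-weak-Bocher} together with the fundamental solution $\Phi$ from Theorem~\ref{thm-FS}. First, by replacing $u$ with $-u$ if necessary, I would arrange that $u$ is bounded from below in a neighborhood of infinity; I would then distinguish according to whether $u$ is bounded from above or from below near the origin. Suppose first that $u$ is bounded from below near $0$. Since $\mathcal{L}$-harmonicity is unaffected by adding constants, I may assume $u \geq 0$ in a punctured neighborhood $B_r \setminus \{0\}$ of the origin; and since $u$ is bounded from below near infinity, adding a further constant I may also assume $u \geq 0$ in $\mathbb{R}^n \setminus B_R$ for some large $R$. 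A localized comparison argument (the "localized comparison principle" advertised in the abstract) should then upgrade this to $u \geq 0$ on all of $\mathbb{R}^n \setminus \{0\}$ after subtracting an appropriate constant, so that Theorem~\ref{thm-weak-Bocher} applies.

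Applying Theorem~\ref{thm-weak-Bocher} to $u$ on $\Omega = \mathbb{R}^n$ (or on a large ball, then letting it grow) yields a constant $a \geq 0$ with $\mathcal{L}u = a\delta_0$ in the weak sense \eqref{eq-delta-weak}. By Theorem~\ref{thm-FS}\eqref{eq-FS1}, the function $v \coloneqq u - a\Phi$ then satisfies $\mathcal{L}v = 0$ in the same weak sense against all $\varphi \in C_c^\infty(\mathbb{R}^n)$, i.e.\ $v$ is $\mathcal{L}$-harmonic across the origin as well — this is where one must be careful: weak harmonicity in $\mathscr{D}'$-pairing sense needs to be promoted to $\mathcal{L}$-harmonicity in the sense of Definition~\ref{def-harmonic}, which should follow because $v$ has a removable singularity at $0$ (it is bounded below near $0$ by the lower bound $\Phi \gtrsim |x|^{2s-n}$ in \eqref{eq-FS-bounds}, and one checks the $\mathcal{L}$-harmonic extension agrees with $v$). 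Thus $v$ is $\mathcal{L}$-harmonic in all of $\mathbb{R}^n$. Next I would control the behavior of $v$ at infinity: since $u$ is bounded from below near infinity and $\Phi(x) \to 0$ as $x \to \infty$ by Theorem~\ref{thm-FS}\eqref{eq-FS3}, $v$ is also bounded from below near infinity, hence (after subtracting a constant) $v$ is a globally $\mathcal{L}$-harmonic function that is bounded from below on all of $\mathbb{R}^n$. The classical Liouville Theorem~\ref{thm-Liouville-s} for $\mathcal{L}$ then forces $v \equiv b$ for some constant $b$, giving $u = a\Phi + b$.

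It remains to handle the case where $u$ is bounded from above near the origin (and from below near infinity). Here I would argue that the singularity at $0$ is in fact removable: because $u$ is bounded above near $0$, the lower bound in \eqref{eq-FS-bounds} shows $u$ cannot behave like $+\Phi$ there, and a comparison with $-\varepsilon\Phi$ for every $\varepsilon > 0$ (using $\mathcal{L}$-superharmonicity of $\Phi$ and the localized comparison principle) shows $u + \varepsilon \Phi$ stays bounded below, so letting $\varepsilon \to 0$ yields that $u$ extends $\mathcal{L}$-harmonically across $0$. This puts us back in the globally $\mathcal{L}$-harmonic, bounded-below-at-infinity situation, and Theorem~\ref{thm-Liouville-s} again gives $u \equiv b$, which is of the stated form with $a = 0$. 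Finally, I would note that the symmetric sub-cases (bounded above near infinity, etc.)\ reduce to the ones above by the $u \mapsto -u$ substitution, and that $a$ may legitimately be negative in the final formula — it is $a \geq 0$ only in the normalization where $u$ is bounded below near $0$, while the $-u$ reduction flips its sign.

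The main obstacle I expect is the removable-singularity / promotion step: passing from the distributional identity $\mathcal{L}(u - a\Phi) = 0$ tested against $C_c^\infty$ functions to genuine $\mathcal{L}$-harmonicity in the pointwise/viscosity-or-weak sense of Definition~\ref{def-harmonic} at the origin. For general measurable kernels $k$ there is no Caccioppoli--Cimmino--Weyl lemma (as the authors emphasize), so this cannot be done by hypoellipticity; instead it must be done by hand, constructing the $\mathcal{L}$-harmonic replacement of $u - a\Phi$ on a small ball and showing via the localized comparison principle that it coincides with $u - a\Phi$ off the origin — the point being that any discrepancy would itself be a nonzero multiple of $\Phi$, contradicting the chosen value of $a$ (or contradicting the one-sided bound near $0$). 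Getting the bookkeeping of constants $a, b$ and the various one-sided bounds to interlock cleanly across all the sub-cases is the other delicate, if less conceptual, part.
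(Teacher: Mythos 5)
Your outline transplants the proof of Theorem~\ref{thm-sing-Liouv-s} (weak B\^ocher, subtract $a\Phi$, global Liouville) to general $\mathcal{L}$, but the two steps that make this work for the fractional Laplacian are precisely the ones that are missing or fail here. First, the promotion step. From Theorem~\ref{thm-weak-Bocher} and Theorem~\ref{thm-FS}\eqref{eq-FS1} you do get $\mathcal{E}(u-a\Phi,\varphi)=0$ for all $\varphi\in C_c^\infty(\mathbb{R}^n)$, but for measurable kernels there is no Weyl-type lemma, so you must show by hand that $v=u-a\Phi$ is bounded near $0$ before you can call it $\mathcal{L}$-harmonic there. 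Your stated reason --- that $v$ ``is bounded below near $0$ by the lower bound $\Phi\gtrsim|x|^{2s-n}$'' --- is backwards: that lower bound makes $-a\Phi$ blow down to $-\infty$, so a priori $v\geq -a\Phi+O(1)$ is \emph{not} bounded below, and the weak identity alone does not rule out that the constant produced by Theorem~\ref{thm-weak-Bocher} fails to cancel the singularity. This is exactly the difficulty the paper isolates after stating Theorem~\ref{thm-Bocher}, and it is resolved there by a sliding argument: $a$ is taken to be the largest constant for which $u-a\Phi$ satisfies a quantitative lower bound involving tail terms, and a residual singularity of $u-a\Phi$ is excluded by combining the isolated singularity Theorem~\ref{thm-iso-sing} with the localized comparison principle (Corollary~\ref{cor-loc-comp}). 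You gesture at this in your closing paragraph, but the dichotomy ``any discrepancy is a nonzero multiple of $\Phi$'' itself presupposes a one-sided bound on the discrepancy that you have not established.

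Second, your case where $u$ is bounded above near $0$ and below near infinity is handled incorrectly: the singularity need not be removable. The function $u=-\Phi$ satisfies both hypotheses and has a genuine singularity, and for it $u+\varepsilon\Phi=(\varepsilon-1)\Phi$ is unbounded below for every $\varepsilon<1$, so your $\varepsilon\Phi$-comparison fails concretely. The correct treatment applies the B\^ocher analysis to $-u$ near the origin and then controls the behavior at infinity separately; this is the content of the paper's Lemma~\ref{lem-bdd2}. Third, the Liouville theorem you invoke at the end, Theorem~\ref{thm-Liouville-s}, is stated only for $(-\Delta)^s$; for $\mathcal{L}$ with measurable kernels the required statement is Theorem~\ref{thm-Liouv-weak}, which is not classical and whose proof relies on the boundedness-at-infinity Lemma~\ref{lem-bdd} (a Harnack inequality on annuli combined with a localized comparison against a rescaled fundamental solution as barrier). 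These at-infinity estimates constitute the second half of the paper's proof and are essentially absent from your proposal.
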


The proof of Theorem~\ref{thm-sing-Liouv-L} consists of two main parts. The first is the classification of singular solutions near the origin, which is given by Theorem~\ref{thm-Bocher} below. The second deals with the behavior of singular solutions at infinity; see Lemmas~\ref{lem-bdd} and \ref{lem-bdd2} for details.

\begin{theorem}\label{thm-Bocher}
Let $\Omega \subset \mathbb{R}^n$ be an open set containing the origin. Let $u$ be $\mathcal{L}$-harmonic in $\Omega \setminus \{0\}$ and bounded from below in $\Omega$. Then there exist a constant $a \geq 0$ and an $\mathcal{L}$-harmonic function $v$ in $\Omega$ such that
\begin{equation*}
u = a\Phi + v.
\end{equation*}
In particular, if $a>0$, then
\begin{equation*}
u(x) \eqsim |x|^{2s-n}
\end{equation*}
in a neighborhood of the origin.
\end{theorem}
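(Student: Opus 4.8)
The plan is to extract the coefficient $a$ from Theorem~\ref{thm-weak-Bocher} and then show that $u-a\Phi$ extends across the origin as an $\mathcal{L}$-harmonic function, which---lacking a nonlocal Weyl lemma---I would carry out via potential theory built on the fundamental solution and the localized comparison principle. First, since $u$ is bounded below in $\Omega$ and constants are $\mathcal{L}$-harmonic with finite tail, I may add a constant so that $u\ge 0$ on a fixed ball $B$ with $0\in B$ and $\overline B\subset\Omega$; this changes neither the hypotheses nor the equation. Theorem~\ref{thm-weak-Bocher} then yields a constant $a\ge 0$ with $\mathcal{L}u=a\delta_0$ in $\Omega$ in the sense of \eqref{eq-delta-weak}. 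Testing \eqref{eq-delta-weak} against nonnegative $\varphi\in C_c^\infty(B)$ shows that $u$ is weakly $\mathcal{L}$-superharmonic in $B$, so after passing to its lower semicontinuous representative (as done for $\mathcal{L}$-superharmonic functions elsewhere in the paper), $u$ is $\mathcal{L}$-superharmonic in $B$ in the sense of Definition~\ref{def-superharmonic}.

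Now set $v:=u-a\Phi$. By Theorem~\ref{thm-FS}\eqref{eq-FS2} the function $\Phi$ is $\mathcal{L}$-harmonic in $\mathbb{R}^n\setminus\{0\}$, so $v$ is $\mathcal{L}$-harmonic in $\Omega\setminus\{0\}$; subtracting $a$ times the identity in Theorem~\ref{thm-FS}\eqref{eq-FS1} from \eqref{eq-delta-weak} gives $\mathcal{L}v=0$ weakly in $\Omega$, the two bilinear integrals being finite by the standing hypotheses on $u$ and the properties of $\Phi$. The remaining and principal point is that $v$ is $\mathcal{L}$-harmonic in \emph{all} of $\Omega$, i.e.\ that its isolated singularity at $0$ is removable. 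Here I would use the Green function of $\mathcal{L}$ on $B$, written $G_B(\cdot,0)=\Phi-h_0$ with $h_0$ the $\mathcal{L}$-harmonic function in $B$ with exterior datum $\Phi|_{\mathbb{R}^n\setminus B}$ (both supplied by the localized comparison principle), together with the Riesz-type decomposition $u=\int_B G_B(\cdot,y)\,\mathrm{d}\mu(y)+w$ of the nonnegative $\mathcal{L}$-superharmonic function $u$, where $\mu\ge 0$ is its Riesz measure and $w$ is $\mathcal{L}$-harmonic in $B$. Since $u$ is $\mathcal{L}$-harmonic in $B\setminus\{0\}$, the measure $\mu$ is concentrated at $\{0\}$, hence $\mu=c\delta_0$ for some $c\ge 0$ and $u=c\Phi-ch_0+w$ on $B$; substituting this into \eqref{eq-delta-weak} with a test function equal to $1$ near $0$ and using $\mathcal{L}\Phi=\delta_0$, $\mathcal{L}h_0=\mathcal{L}w=0$ forces $c=a$. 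Thus $v=w-ah_0$ on $B$, which is $\mathcal{L}$-harmonic in $B$; combined with the $\mathcal{L}$-harmonicity of $v$ in $\Omega\setminus\{0\}$ and the fact that $\mathcal{L}$-harmonicity is a local property, $v$ is $\mathcal{L}$-harmonic in $\Omega$. Undoing the normalization gives the decomposition $u=a\Phi+v$ with $v$ $\mathcal{L}$-harmonic in $\Omega$ and $a\ge 0$.

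For the final assertion, when $a>0$ the function $v$ is locally bounded near $0$ by interior regularity for $\mathcal{L}$, while \eqref{eq-FS-bounds} gives $\Phi(x)\eqsim|x|^{2s-n}$ in a neighborhood of the origin with $\Phi(x)\to\infty$ as $x\to 0$; hence $u(x)=a\Phi(x)+v(x)\eqsim|x|^{2s-n}$ near the origin. The main obstacle is exactly the removable-singularity step: with no nonlocal Caccioppoli--Cimmino--Weyl lemma one cannot argue by hypoellipticity, and $v=u-a\Phi$ is a priori controlled near $0$ only at the critical rate $|x|^{2s-n}$---not by a bounded function---so a naive comparison of $v$ against $\varepsilon\Phi$ never produces smallness. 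What closes the argument is that the Riesz measure of $v$ vanishes, equivalently that $\mathcal{L}v=0$ weakly with no atom at $0$, which is precisely the content of the identity $c=a$ relating the mass furnished by Theorem~\ref{thm-weak-Bocher} to the coefficient of the Green function.
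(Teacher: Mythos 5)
You correctly isolate the crux --- that $v=u-a\Phi$ must be shown to have a removable singularity at the origin, and that neither hypoellipticity nor a naive comparison with $\varepsilon\Phi$ achieves this --- but the tool you invoke to close the gap is not available. The Riesz decomposition $u=\int_B G_B(\cdot,y)\,\mathrm{d}\mu(y)+w$ of a nonnegative $\mathcal{L}$-superharmonic function, the identification of the Riesz measure with the weak right-hand side (so that harmonicity in $B\setminus\{0\}$ forces $\mu=c\delta_0$), and the identity $G_B(\cdot,0)=\Phi-h_0$ are nowhere established in the paper and are not off-the-shelf facts for operators with merely measurable kernels; proving them is a substantial piece of potential theory in its own right. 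Worse, the identity $G_B(\cdot,0)=\Phi-h_0$ amounts to saying that the difference of two functions each satisfying $\mathcal{E}(\cdot,\varphi)=\varphi(0)$ is regular ($W^{s,2}_{\mathrm{loc}}$ and continuous) across the origin --- which is exactly the removability statement under dispute, so the argument is circular at its key point. A smaller but related slip: since $u\eqsim|x|^{2s-n}$ near $0$ in the non-removable case and $n>2s$, $u$ does not belong to $W^{s,2}_{\mathrm{loc}}$ across the origin, so ``testing \eqref{eq-delta-weak} against nonnegative $\varphi$'' does not make $u$ a weak supersolution in the sense of Definition~\ref{def-harmonic}; the paper obtains $\mathcal{L}$-superharmonicity only through the truncation argument of Lemmas~\ref{lem-supersolution} and~\ref{lem-superharmonic}. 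For the same reason, writing $\mathcal{E}(v,\varphi)=\mathcal{E}(u,\varphi)-a\mathcal{E}(\Phi,\varphi)$ and calling $v$ a weak solution in $\Omega$ presupposes regularity of $v$ at $0$ that has not yet been proved.

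The paper's proof avoids any representation formula and, in fact, does not use Theorem~\ref{thm-weak-Bocher} in this step at all. It normalizes $u\ge0$, applies the isolated singularity Theorem~\ref{thm-iso-sing} to reduce to the case $u\eqsim|x|^{2s-n}$, and then defines $a$ \emph{extremally} as the largest constant for which $u-a\Phi$ stays above an explicit tail-controlled barrier on $B_{2R}$. If $u-a\Phi$ still had a non-removable singularity, Theorem~\ref{thm-iso-sing} would give $u-a\Phi\geq\varepsilon\Phi-(a+\varepsilon)\Phi_R$ near $0$ and on the outer annulus, and the localized comparison principle (Corollary~\ref{cor-loc-comp}) would then allow $a$ to be replaced by $a+\varepsilon$, contradicting maximality. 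To salvage your route you would have to prove the Riesz decomposition for $\mathcal{L}$-superharmonic functions with measurable kernels, which is not easier than the theorem itself.
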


One might expect that Theorem~\ref{thm-Bocher} would follow immediately from Theorem~\ref{thm-weak-Bocher} by subtracting equation \eqref{eq-delta} from the corresponding equation satisfied by $a\Phi$. However, this is not the case, as one must additionally show that $u-a\Phi$ is $\mathcal{L}$-harmonic in $\Omega$, which requires regularity of this function near the origin. The proof of this fact is rather involved and relies on nontrivial tools, including the isolated singularity theorem established in our earlier paper~\cite{KL24} (see Theorem~\ref{thm-iso-sing}) and localized maximum/comparison principles. To this end, we develop the nonlocal counterpart of the approach by Gilbarg--Serrin~\cite{GS56}.

Several maximum and comparison principles for nonlocal operators are available in the literature, but they typically require pointwise comparison of solutions in the complement of the domain as an assumption; see, for example, Silvestre~\cite[Proposition~2.21]{Sil07}, Lindgren--Lindqvist~\cite[Lemma~9]{LL14}, Korvenp\"a\"a--Kuusi--Palatucci~\cite[Lemma~6]{KKP17}, Fern\'andez-Real--Ros-Oton~\cite[Lemma~2.3.3 and Corollary 2.3.4]{FRRO24}, Bj\"orn--Bj\"orn--Kim~\cite[Corollary~3.6]{BBK24}, and Kim--Lee~\cite[Lemma~5.2]{KL23}. This pointwise condition is quite natural due to the nonlocal nature of the operators. However, they are not applicable in our setting, as we aim to compare singular solutions with the fundamental solution without knowing a priori which one dominates the other outside the domain. Therefore, we need a new comparison principle that does not require pointwise comparison in the complement of the domain. For this purpose, we establish the localized maximum and comparison principles in Theorem~\ref{thm-loc-max} and Corollary~\ref{cor-loc-comp}; they encode the relevant information about functions outside the domain in integral quantities rather than in a pointwise comparison.

For the precise definitions of a weak super- and subsolution of $\mathcal{L}u=0$ and the tail term, we refer the reader to Definition~\ref{def-harmonic} and \eqref{eq-tail}, respectively.

\begin{theorem}[Localized maximum principle]\label{thm-loc-max}
Let $0<r<R$ and $\theta \geq 2$. Let $\Omega' \subset \mathbb{R}^n$ be an open set such that $B_R \setminus \overline{B}_r \Subset \Omega'$. Let $u$ be a weak supersolution of $\mathcal{L}u=0$ in $B_R \setminus \overline{B}_r$ such that $u \in W^{s, 2}(\Omega')$ and $u \geq 0$ in $\overline{B}_{\theta R} \setminus (B_R \setminus \overline{B}_r)$. Then there exists a constant $C=C(n, s, \Lambda)>0$ such that
\begin{equation*}
u \geq - C\theta^{-2s}\mathrm{Tail}(u_-; 0, \theta R)
\end{equation*}
a.e.\ in $B_R \setminus \overline{B}_r$.
\end{theorem}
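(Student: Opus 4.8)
\emph{Strategy.} The plan is to prove the pointwise lower bound by a comparison argument: I first absorb the uncontrolled part of $u$ at infinity into a bounded interior forcing term, and then dominate that forcing from below by a multiple of the torsion function of $B_R$. Write $A:=B_R\setminus\overline B_r$ and $\mathcal T:=\mathrm{Tail}(u_-;0,\theta R)$; if $\mathcal T=\infty$ there is nothing to prove, so assume $\mathcal T<\infty$. For $x\in B_R$ set $g(x):=\int_{\{|y|>\theta R\}}u_-(y)\,k(x,y)\,\mathrm dy$. Since $\theta\ge2$ forces $|x-y|>|y|/2$ whenever $|x|<R$ and $|y|>\theta R$, the upper ellipticity bound gives $g(x)\le \Lambda 2^{n+2s}(\theta R)^{-2s}\mathcal T$, so with $\gamma:=2\,\mathrm{esssup}_{B_R}g$ one has $\gamma\le 2\Lambda 2^{n+2s}(\theta R)^{-2s}\mathcal T$.

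\emph{Torsion function.} Next I would use the torsion function $w_0$ of $B_R$, i.e.\ the solution of $\mathcal Lw_0=1$ in $B_R$ with $w_0=0$ in $\mathbb R^n\setminus B_R$, and record $w_0\ge0$ together with the scale-invariant bound $\mathrm{esssup}_{B_R}w_0\le C_1(n,s,\Lambda)R^{2s}$. After rescaling to $B_1$ (the rescaled kernel obeys the same ellipticity bounds), this bound follows by comparing $w_0$ with the explicit barrier $\Theta:=a\,\eta$, where $\eta\in C_c^\infty(B_3)$, $\eta\equiv1$ on $B_2$, and $a=a(n,s,\Lambda)$: for $x\in B_1$ one computes $\mathcal L\Theta(x)=2a\int_{\mathbb R^n}(1-\eta(y))k(x,y)\,\mathrm dy\ge 2a\Lambda^{-1}\int_{\{|y|\ge3\}}|x-y|^{-n-2s}\,\mathrm dy\ge c_\ast(n,s)\Lambda^{-1}a$, so a suitable choice of $a$ gives $\mathcal L\Theta\ge1$ in $B_1$ with $\Theta\ge0=w_0$ outside $B_1$; the comparison principle then yields $w_0\le\Theta\le a$ in $B_1$.

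\emph{Core comparison.} Set $\widetilde u:=u+u_-\mathbf 1_{\{|\cdot|>\theta R\}}$. On $\overline B_{\theta R}\setminus A$ we have $\widetilde u=u\ge0$ by hypothesis, and on $\{|\cdot|>\theta R\}$ we have $\widetilde u=u+u_-=u_+\ge0$; hence $\widetilde u\ge0$ a.e.\ in $\mathbb R^n\setminus A$. A direct computation of the bilinear form — using that $u_-\mathbf 1_{\{|\cdot|>\theta R\}}$ vanishes on $B_{\theta R}\supset\overline A$ — shows that plugging $\varphi$ into the bilinear form against $u_-\mathbf 1_{\{|\cdot|>\theta R\}}$ gives exactly $-2\int_A\varphi\,g$ for every $0\le\varphi\in C_c^\infty(A)$. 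Adding this to the supersolution inequality for $u$ and to $\gamma$ times the weak identity $\mathcal Lw_0=1$ shows that $\widetilde u+\gamma w_0$ is a weak supersolution of $\mathcal L(\cdot)=0$ in $A$, since the resulting right-hand side equals $\int_A\varphi(\gamma-2g)\ge0$ by the choice of $\gamma$. Because $\widetilde u+\gamma w_0\ge0$ a.e.\ in $\mathbb R^n\setminus A$, the standard comparison principle (applicable here since the exterior comparison holds pointwise a.e.) gives $\widetilde u+\gamma w_0\ge0$ a.e.\ in $A$. As $\widetilde u=u$ on $A\subset B_{\theta R}$, we conclude $u\ge -\gamma w_0\ge -\gamma\,\mathrm{esssup}_{B_R}w_0\ge -C_1\gamma R^{2s}\ge -C\theta^{-2s}\mathcal T$ a.e.\ in $A$, with $C:=2\Lambda 2^{n+2s}C_1$, which is the assertion.

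\emph{Main difficulty.} The gain of the factor $\theta^{-2s}$ comes from the competition between the two scalings in the last line: the absorbed far-field forcing $g$ has size $(\theta R)^{-2s}\mathcal T$, whereas the torsion function compensating it has size only $R^{2s}$, and their product carries precisely $\theta^{-2s}$. The one genuinely technical point, which I expect to be the main obstacle, is the uniform torsion estimate $\mathrm{esssup}_{B_R}w_0\lesssim R^{2s}$ for operators with merely measurable kernels, handled by the explicit barrier above; everything else is bookkeeping with the weak formulation, the ellipticity bounds, and a single application of the (pointwise-exterior) comparison principle.
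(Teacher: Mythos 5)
Your proof is correct, but it takes a genuinely different route from the paper. The paper multiplies $u$ by a cutoff $\eta\in C^\infty_c(B_{\theta R})$ with $\eta\equiv 1$ on $B_{(\theta+1)R/2}$, shows that $u\eta$ is a weak supersolution of $\mathcal{L}(u\eta)=-2^{n+2s+1}\Lambda(\theta R)^{-2s}\mathrm{Tail}(u_-;0,\theta R)$ in the annulus, and then invokes a separate non-homogeneous maximum principle (Lemma~\ref{lem-MP-nonhom}) giving $u\ge -CR^{2s}\|f\|_{L^\infty}$; that lemma is proved by testing with $(u+CC_0)_-$ and absorbing the forcing via the kernel's lower bound on $\mathbb{R}^n\setminus B_{2R}(x)$. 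You instead add $u_-\mathbf{1}_{\{|\cdot|>\theta R\}}$ to $u$ (rather than truncating by a cutoff) to enforce exterior nonnegativity, and you compensate the resulting forcing $g$ with a torsion-function barrier satisfying $\sup_{B_R}w_0\lesssim R^{2s}$. The two mechanisms are the same in substance --- the far field contributes at order $(\theta R)^{-2s}\mathrm{Tail}$, and a bounded forcing costs only $R^{2s}\|f\|_\infty$, both ultimately exploiting the lower ellipticity bound of $k$ at distance $\eqsim R$ --- but your implementation is more modular (a reusable torsion estimate plus the standard comparison principle of Theorem~\ref{thm-MP}) at the price of extra machinery: you need existence of $w_0\in V^{s,2}_0(B_R)$ (Lax--Milgram plus the fractional Friedrichs inequality), and you should say a word about why the pointwise computation $\mathcal{L}\Theta\ge 1$ for your smooth bump barrier upgrades to the weak formulation $\mathcal{E}(\Theta,\varphi)\ge\int\varphi$, since for merely measurable kernels this identification is not automatic (here it is fine because $\Theta$ is Lipschitz and constant near the points where $\mathcal{L}\Theta$ is evaluated, so the inner integral converges absolutely). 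The remaining details --- the identity $\mathcal{E}(u_-\mathbf{1}_{\{|\cdot|>\theta R\}},\varphi)=-2\int_A\varphi g$, the membership $(\widetilde u+\gamma w_0)_-\in V^{s,2}_0(A)$ via Lemma~\ref{lem-test-V}, and the final bookkeeping producing $\theta^{-2s}$ --- all check out.
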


\begin{corollary}[Localized comparison principle]\label{cor-loc-comp}
Let $0<r<R$ and $\theta \geq 2$. Let $\Omega' \subset \mathbb{R}^n$ be an open set such that $B_R \setminus \overline{B}_r \Subset \Omega'$. Let $u, v \in W^{s, 2}(\Omega')$ be a weak super- and subsolution of $\mathcal{L}u=0$ in $B_R \setminus \overline{B}_r$, respectively, and suppose that $u \geq v$ in $\overline{B}_{\theta R} \setminus (B_R \setminus \overline{B}_r)$. Then there exists a constant $C=C(n, s, \Lambda)>0$ such that
\begin{equation*}
u \geq v - C \theta^{-2s} \mathrm{Tail}((u-v)_-; 0, \theta R)
\end{equation*}
a.e.\ in $B_R \setminus \overline{B}_r$.
\end{corollary}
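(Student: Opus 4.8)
The plan is to obtain Corollary~\ref{cor-loc-comp} as an immediate consequence of the localized maximum principle (Theorem~\ref{thm-loc-max}), exploiting the linearity of the operator $\mathcal{L}$.

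First I would set $w \coloneqq u - v$ and note that $w \in W^{s,2}(\Omega')$, since $W^{s,2}(\Omega')$ is a vector space. The next step is to verify that $w$ is a weak supersolution of $\mathcal{L}w = 0$ in $B_R \setminus \overline{B}_r$. To this end, fix any nonnegative $\varphi \in C_c^\infty(B_R \setminus \overline{B}_r)$ and use the bilinearity of the form $(f, g) \mapsto \int_{\mathbb{R}^n} \int_{\mathbb{R}^n} (f(x)-f(y))(g(x)-g(y)) k(x, y)\,\mathrm{d}y\,\mathrm{d}x$ to split
\begin{equation*}
\int_{\mathbb{R}^n} \int_{\mathbb{R}^n} (w(x)-w(y))(\varphi(x)-\varphi(y)) k(x, y)\,\mathrm{d}y\,\mathrm{d}x
\end{equation*}
into the corresponding quantity for $u$ minus that for $v$. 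The first of these is $\geq 0$ because $u$ is a weak supersolution in $B_R \setminus \overline{B}_r$, and the second is $\leq 0$ because $v$ is a weak subsolution there, so their difference is $\geq 0$. For this splitting to be legitimate one needs each of the two double integrals to converge absolutely; this is exactly what the hypotheses $u, v \in W^{s,2}(\Omega')$ together with the tail/integrability conditions built into the definition of a weak super- and subsolution (Definition~\ref{def-harmonic}) provide, and $w$ lies in the same tail space because that space is linear.

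Second, the assumption $u \geq v$ on $\overline{B}_{\theta R} \setminus (B_R \setminus \overline{B}_r)$ is equivalent to $w \geq 0$ there, so $w$ meets the sign requirement of Theorem~\ref{thm-loc-max}. Applying Theorem~\ref{thm-loc-max} to $w$, with the same $r$, $R$, $\theta$, and $\Omega'$, yields a constant $C = C(n, s, \Lambda) > 0$ such that
\begin{equation*}
w \geq -C\theta^{-2s}\,\mathrm{Tail}(w_-; 0, \theta R) \quad \text{a.e.\ in } B_R \setminus \overline{B}_r,
\end{equation*}
and since $w = u - v$ and hence $w_- = (u-v)_-$, this is precisely the asserted inequality. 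I do not expect a serious obstacle here; the only point requiring genuine care is the verification that $u - v$ qualifies as a weak supersolution in the precise sense of Definition~\ref{def-harmonic} — namely, the absolute convergence of the relevant double integrals and the fact that the test-function class over which the super- and subsolution inequalities are imposed is identical for $u$ and $v$, so that the two inequalities may be subtracted term by term. Both are routine given the stated Sobolev and tail hypotheses, and the rest of the argument is purely formal.
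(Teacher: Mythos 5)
Your argument is correct and is exactly how the paper obtains this result: the authors simply note that the corollary is a direct consequence of Theorem~\ref{thm-loc-max}, applied to $w=u-v$, which is a weak supersolution by linearity of $\mathcal{E}$ and satisfies $w\geq 0$ on $\overline{B}_{\theta R}\setminus(B_R\setminus\overline{B}_r)$. Your verification of the absolute convergence needed to split the bilinear form is a reasonable (and correct) filling-in of the detail the paper leaves implicit.
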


We point out that we restrict our attention to the case $n \geq 2$, as Lemma~\ref{lem-harnack} requires this dimensional condition. Apart from this lemma, all other arguments work for $n>2s$. The critical case $n=2s$ remains largely open.

The paper is organized as follows. In Section~\ref{sec-preliminaries}, we collect several definitions and well-known results concerning function spaces and the notion of solutions relevant to our setting. Section~\ref{sec-fL} is devoted to the proof of the Liouville Theorem~\ref{thm-sing-Liouv-s} for singular solutions to the fractional Laplace equation. In Section~\ref{sec-weak-Bocher}, we establish a weak Bôcher type theorem (Theorem~\ref{thm-weak-Bocher}) for general operators $\mathcal{L}$. Section~\ref{sec-fundamentalsol} is concerned with the construction of the fundamental solution for $\mathcal{L}$ and its properties in Theorem~\ref{thm-FS}. In Section~\ref{sec-Bocher}, we prove the B\^ocher type Theorem~\ref{thm-Bocher} near the origin with the help of the isolated singularity theorem. For this purpose, we also provide the proofs of Theorem~\ref{thm-loc-max} and Corollary~\ref{cor-loc-comp}. Finally, in Section~\ref{sec-liouville}, the proof of the Liouville Theorem~\ref{thm-sing-Liouv-L} for singular solutions to the equation $\mathcal{L}u=0$ is provided.

\section{Preliminaries}\label{sec-preliminaries}

In this section, we recall the definitions of several function spaces and the notion of solutions associated with the fractional Laplacian and the nonlocal operators in \eqref{eq-L}. We also summarize some well-known results that will be used throughout the paper. Throughout this section, we assume that $\Omega \subset \mathbb{R}^n$ is a nonempty open set, which may be unbounded unless otherwise stated.

Let $\mathscr{S}(\mathbb{R}^n)$ be the Schwartz space of rapidly decreasing functions on $\mathbb{R}^n$, and let $\mathscr{S}'(\mathbb{R}^n)$ be its topological dual, the space of tempered distributions. We also denote by $\mathscr{E}'(\mathbb{R}^n)$ the dual of $C^\infty(\mathbb{R}^n)$, that is, the space of compactly supported distributions, and by $\mathscr{D}'(\mathbb{R}^n)$ the dual of $C^\infty_c(\mathbb{R}^n)$, the space of all distributions on $\mathbb{R}^n$. Following Silvestre~\cite[Definition~2.3]{Sil07}, we denote by $\mathscr{S}_s(\mathbb{R}^n)$ the space $C^\infty(\mathbb{R}^n)$ endowed with the countable family of seminorms
\begin{equation*}
    [u]_{\mathscr{S}_s(\mathbb{R}^n)}^{\alpha} \coloneqq \sup_{x \in \mathbb{R}^n} (1+|x|^{n+2s}) |\partial^\alpha u(x)|,
\end{equation*}
and by $\mathscr{S}_s'(\mathbb{R}^n)$ its topological dual. Note that
\begin{align*}
&C^\infty_c(\mathbb{R}^n) \hookrightarrow \mathscr{S}(\mathbb{R}^n) \hookrightarrow \mathscr{S}_s(\mathbb{R}^n) \hookrightarrow C^\infty(\mathbb{R}^n) \quad\text{and}\\
&\mathscr{E}'(\mathbb{R}^n) \hookrightarrow \mathscr{S}_s'(\mathbb{R}^n) \hookrightarrow \mathscr{S'}(\mathbb{R}^n) \hookrightarrow \mathscr{D}'(\mathbb{R}^n).
\end{align*}
Moreover, if $\varphi \in \mathscr{S}(\mathbb{R}^n)$, then $(-\Delta)^s \varphi \in \mathscr{S}_s(\mathbb{R}^n)$ by Lemma~8.1 in Garofalo~\cite{Gar19}.

We now recall the definition of a distributional solution with respect to the fractional Laplacian; see \cite{Sil07}, \cite[Definition~1.5]{Buc16}, or \cite[Definition~8.2]{Gar19}.

\begin{definition}\label{def-distribution}
    Let $T \in \mathscr{S}'(\mathbb{R}^n)$. We say that $u \in \mathscr{S}_s'(\mathbb{R}^n)$ solves $(-\Delta)^su=T$ in $\mathscr{D}'(\mathbb{R}^n)$ (or is a \emph{distributional solution} of $(-\Delta)^s u = T$ in $\mathbb{R}^n$) if
    \begin{equation*}
        \langle u, (-\Delta)^s\varphi \rangle = \langle T, \varphi \rangle
    \end{equation*}
    for all $\varphi \in \mathscr{S}(\mathbb{R}^n)$.
\end{definition}

We note that Definition~\ref{def-distribution} cannot be adapted to the operators $\mathcal{L}$ in \eqref{eq-L} with measurable kernels $k$, since $\mathcal{L}\varphi$ is not well defined in general even for $\varphi \in C^\infty_c(\mathbb{R}^n)$.

In what follows, we also present other notions of solutions, namely weak solutions and $\mathcal{L}$-harmonic functions. For this purpose, we recall several function spaces. The fractional Sobolev space $W^{s, 2}(\Omega)$ consists of measurable functions $u: \Omega \to [-\infty, \infty]$ whose fractional Sobolev norm
\begin{align*}
\|u\|_{W^{s, 2}(\Omega)}
&\coloneqq \left( \|u\|_{L^2(\Omega)}^2 + [u]_{W^{s, 2}(\Omega)}^2 \right)^{1/2} \\
&\coloneqq \left( \int_\Omega |u(x)|^2 \,\mathrm{d}x + \int_\Omega \int_\Omega \frac{|u(x)-u(y)|^2}{|x-y|^{n+2s}} \,\mathrm{d}y\,\mathrm{d}x \right)^{1/2}
\end{align*}
is finite. By $W^{s, 2}_{\mathrm{loc}}(\Omega)$ we denote the space of functions $u$ such that $u \in W^{s, 2}(G)$ for every open $G \Subset \Omega$. We refer the reader to Di Nezza--Palatucci--Valdinoci~\cite{DNPV12} for properties of these spaces.

Since we are concerned with nonlocal equations, we also need the \emph{tail space}
\begin{equation*}
L^{1}_{2s}(\mathbb{R}^n) \coloneqq \left\{ u~\text{measurable}: \int_{\mathbb{R}^n} \frac{|u(y)|}{(1+|y|)^{n+2s}} \,\mathrm{d}y < \infty \right\};
\end{equation*}
see Kassmann~\cite{Kas11} and Di Castro--Kuusi--Palatucci~\cite{DCKP16}. Note that $u \in L^{1}_{2s}(\mathbb{R}^n)$ if and only if the \emph{nonlocal tail} (or \emph{tail} for short)
\begin{equation}\label{eq-tail}
\mathrm{Tail}(u; x_0, r) \coloneqq r^{2s} \int_{\mathbb{R}^n \setminus B_r(x_0)} \frac{|u(y)|}{|y-x_0|^{n+2s}} \,\mathrm{d}y
\end{equation}
is finite for any $x_0 \in \mathbb{R}^n$ and $r>0$.

Next, we recall the definitions of a weak solution to the equation $\mathcal{L}u=0$, or more generally $\mathcal{L}u=f$, and of an $\mathcal{L}$-harmonic function. To this end, we define, for measurable functions $u, v: \mathbb{R}^n \to [-\infty, \infty]$, the bilinear form
\begin{equation*}
\mathcal{E}(u,v)\coloneqq\int_{\mathbb{R}^n} \int_{\mathbb{R}^n} (u(x)-u(y))(v(x)-v(y)) k(x, y) \,\mathrm{d}y\,\mathrm{d}x,
\end{equation*}
provided that the integral is finite.

\begin{definition}\label{def-harmonic}
Let $f \in L^\infty(\Omega)$. A function $u \in W^{s, 2}_{\mathrm{loc}}(\Omega) \cap L^{1}_{2s}(\mathbb{R}^n)$ is a {\it weak supersolution} (resp.\ \emph{weak subsolution}) of $\mathcal{L}u=f$ in $\Omega$ if
\begin{equation}\label{eq-sol}
\mathcal{E}(u, \varphi) \geq \int_\Omega f\varphi \,\mathrm{d}x ~\left(\text{resp.} \leq  \int_\Omega f\varphi \,\mathrm{d}x\right)
\end{equation}
for all nonnegative $\varphi \in C^\infty_c(\Omega)$. A function $u \in W^{s, 2}_{\mathrm{loc}}(\Omega) \cap L^{1}_{2s}(\mathbb{R}^n)$ is a {\it weak solution} of $\mathcal{L}u=f$ in $\Omega$ if \eqref{eq-sol} holds for all $\varphi \in C^\infty_c(\Omega)$. A function $u$ is \emph{$\mathcal{L}$-harmonic} in $\Omega$ if it is a weak solution of $\mathcal{L}u=0$ in $\Omega$ and $u \in C(\Omega)$.
\end{definition}

It is useful to deal with a larger class of test functions than $C^\infty_c(\Omega)$. For this purpose, we recall the space
\begin{equation*}
V^{s, 2}(\Omega) \coloneqq \left\{ u~\text{measurable}: u|_\Omega \in L^2(\Omega) \text{ and } \int_{\Omega} \int_{\mathbb{R}^n} \frac{|u(x)-u(y)|^2}{|x-y|^{n+2s}} \,\mathrm{d}y \,\mathrm{d}x < \infty \right\},
\end{equation*}
equipped with the norm
\begin{align*}
\|u\|_{V^{s, 2}(\Omega)}
&\coloneqq \left( \|u\|_{L^2(\Omega)}^2 + [u]_{V^{s, 2}(\Omega)}^2 \right)^{1/2} \\
&\coloneqq \left( \int_\Omega |u(x)|^2 \,\mathrm{d}x + \int_\Omega \int_{\mathbb{R}^n} \frac{|u(x)-u(y)|^2}{|x-y|^{n+2s}} \,\mathrm{d}y\,\mathrm{d}x \right)^{1/2}.
\end{align*}
A slight modification of the proofs of Proposition~2.2 and Corollary~2.3 in Bj\"orn--Bj\"orn--Kim~\cite{BBK24} yields the following results.

\begin{proposition}\label{prop-test-V}
Let $u \in W^{s, 2}_{\mathrm{loc}}(\Omega) \cap L^1_{2s}(\mathbb{R}^n)$ and $f \in L^\infty(\Omega)$. Then $u$ is a weak solution \textup{(}resp.\ weak supersolution\textup{)} of $\mathcal{L}u=f$ in $\Omega$ if and only if
\begin{equation*}
\mathcal{E}(u, \varphi) \geq \int_{\Omega} f \varphi \,\mathrm{d}x
\end{equation*}
for all $\varphi \in V^{s, 2}(\Omega)$ \textup{(}resp.\ for all $0 \leq \varphi \in V^{s, 2}(\Omega)$\textup{)} with $\supp\varphi \Subset \Omega$.
\end{proposition}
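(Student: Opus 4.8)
The plan is to follow the strategy of Proposition~2.2 and Corollary~2.3 in \cite{BBK24}, adapting it to the present setting with a bounded right-hand side $f \in L^\infty(\Omega)$. One implication is immediate: since $C^\infty_c(\Omega) \subset V^{s,2}(\Omega)$ and any $\varphi \in C^\infty_c(\Omega)$ has $\supp \varphi \Subset \Omega$, the stated condition for all admissible $\varphi \in V^{s,2}(\Omega)$ trivially implies the defining inequality in Definition~\ref{def-harmonic}. The content is therefore the converse: given that $\mathcal{E}(u,\varphi) \geq \int_\Omega f\varphi$ for all (nonnegative) $\varphi \in C^\infty_c(\Omega)$, one must upgrade this to all (nonnegative) $\varphi \in V^{s,2}(\Omega)$ with $\supp \varphi \Subset \Omega$.

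First I would fix such a $\varphi$ and choose open sets $G, G'$ with $\supp \varphi \Subset G \Subset G' \Subset \Omega$, together with a cutoff $\eta \in C^\infty_c(G')$ with $\eta \equiv 1$ on $\overline{G}$. The natural approach is to approximate $\varphi$ by a sequence $\varphi_j \in C^\infty_c(G')$ (obtained by mollification, noting $\supp \varphi_j \Subset G'$ for $j$ large) converging to $\varphi$ in $W^{s,2}(G')$ and, after multiplying by $\eta$ if necessary, with supports in a fixed compact subset of $\Omega$; in the supersolution case one keeps the $\varphi_j$ nonnegative by mollifying a nonnegative function. The key analytic step is then to pass to the limit in $\mathcal{E}(u, \varphi_j) \to \mathcal{E}(u,\varphi)$. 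I would split the double integral as
\begin{equation*}
\mathcal{E}(u,\psi) = \int_{G'}\int_{G'} (u(x)-u(y))(\psi(x)-\psi(y)) k(x,y)\,\mathrm{d}y\,\mathrm{d}x + 2\int_{\mathrm{supp}\,\psi}\int_{\mathbb{R}^n \setminus G'} (u(x)-u(y))\psi(x) k(x,y)\,\mathrm{d}y\,\mathrm{d}x,
\end{equation*}
using symmetry of $k$ and that $\psi$ vanishes off $G'$. For the local part one uses $u \in W^{s,2}_{\mathrm{loc}}(\Omega)$, the ellipticity bound $k \leq \Lambda |x-y|^{-n-2s}$, and the Cauchy--Schwarz inequality to bound it by $\Lambda [u]_{W^{s,2}(G')} [\psi]_{W^{s,2}(G')}$, so that $W^{s,2}(G')$-convergence of $\varphi_j$ suffices. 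For the nonlocal part, on $\{x \in \supp\psi,\ y \notin G'\}$ the distance $|x-y|$ is bounded below, so $k(x,y) \lesssim |x-y|^{-n-2s}$ is integrable in $y$ against $1 + |u(y)|$; one estimates it using $u \in L^1_{2s}(\mathbb{R}^n)$ (controlling the tail $\int_{\mathbb{R}^n \setminus G'} |u(y)|\,|x-y|^{-n-2s}\,\mathrm{d}y$ uniformly for $x$ in a compact set) together with $\|u\|_{L^1(\supp\psi)}$ and $\|\psi\|_{L^\infty}$, and the convergence follows from $\varphi_j \to \varphi$ in $L^2$ hence (on the fixed compact support) in $L^1$, and boundedly. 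The right-hand side converges since $f \in L^\infty$ and $\varphi_j \to \varphi$ in $L^1(\Omega)$. This yields the desired inequality for $\varphi$.

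The main obstacle is the bookkeeping needed to make the approximation legitimate while preserving both the sign constraint (in the supersolution case) and the containment of supports in a single compact subset of $\Omega$, so that all the integrals above are genuinely finite and the convergence is uniform in $j$. A clean way around potential sign issues is to first prove the weak-solution equivalence by approximating arbitrary $\varphi \in V^{s,2}(\Omega)$ with $\supp\varphi \Subset \Omega$, and then handle the supersolution case by the same argument applied to nonnegative mollifications of a nonnegative $\varphi$; since mollification of a nonnegative function with a nonnegative kernel stays nonnegative, this is automatic. Once the local and tail estimates above are in place, everything else is routine, and the proposition follows exactly as in \cite{BBK24}.
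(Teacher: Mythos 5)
Your proposal is correct and follows essentially the same route as the paper: mollify $\varphi$ into $\varphi_j \in C^\infty_c(\Omega)$ with uniformly bounded supports, establish the continuity estimate $|\mathcal{E}(u,\varphi_j-\varphi)|\le C(u)\,\|\varphi_j-\varphi\|_{V^{s,2}(\Omega)}$ (the paper simply quotes this from Equation~(2.9) of \cite{KL23}, whereas you derive it by splitting into the local part over $G'\times G'$ and the tail part), and pass to the limit. One small correction: in the tail part you invoke $\|\psi\|_{L^\infty}$ to control $\int |u(x)||\psi(x)|\,\mathrm{d}x$, but a general $\varphi\in V^{s,2}(\Omega)$ need not be bounded; instead use Cauchy--Schwarz with $u\in L^2(G')$ (available since $u\in W^{s,2}_{\mathrm{loc}}(\Omega)$) together with $\varphi_j\to\varphi$ in $L^2$, which is exactly the bound the $V^{s,2}(\Omega)$-norm estimate encodes.
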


\begin{proof}
By mollification, there exist functions $\varphi_j \in C^\infty_c(\Omega)$ such that $\supp\varphi_j$ is uniformly bounded in $j$ and $\varphi_j \to \varphi$ in $V^{s, 2}(\Omega)$ as $j \to \infty$. Moreover, $\varphi_j \geq 0$ if $\varphi \geq 0$. Then, Equation~(2.9) in our earlier work~\cite{KL23} shows that
\begin{equation}\label{eq-conv1}
|\mathcal{E}(u, \varphi_j - \varphi)| \leq C(u) \|\varphi_j - \varphi\|_{V^{s, 2}(\Omega)},
\end{equation}
where $C(u)$ is a constant depending on $u$, but not on $j$. Moreover, we have
\begin{equation}\label{eq-conv2}
\left| \int_{\Omega} f(\varphi_j-\varphi) \,\mathrm{d}x \right| \leq \|f\|_{L^\infty(\Omega)} |\supp(\varphi_j-\varphi)|^{1/2} \|\varphi_j-\varphi\|_{L^2(\Omega)} \quad\text{for all }j \in \mathbb{N}.
\end{equation}
Since $\sup_{j \in \mathbb{N}} |\supp(\varphi_j-\varphi)| < \infty$, the desired result follows by letting $j \to \infty$ in \eqref{eq-conv1} and \eqref{eq-conv2}.
\end{proof}

\begin{corollary}
A function $u$ is a weak solution of $\mathcal{L}u=f$ in $\Omega$ if and only if both $u$ and $-u$ are weak supersolutions of $\mathcal{L}u=f$ in $\Omega$.
\end{corollary}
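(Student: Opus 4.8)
The statement follows directly from the definition of a weak solution together with the bilinearity of the form $\mathcal{E}$, so there is no genuine obstacle; the only step that is not purely formal is lifting an equality from sign-definite test functions to all of $C_c^\infty(\Omega)$, and even that is a one-line cutoff argument. The plan is: (i) record that a weak solution of $\mathcal{L}u=f$ in $\Omega$ is precisely a function that is simultaneously a weak super- and a weak subsolution of $\mathcal{L}u=f$ in $\Omega$; (ii) record that, by the bilinearity $\mathcal{E}(-u,\varphi)=-\mathcal{E}(u,\varphi)$, the function $u$ is a weak subsolution exactly when $-u$ is a weak supersolution; (iii) combine (i) and (ii).

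\emph{Forward direction.} Let $u$ be a weak solution of $\mathcal{L}u=f$ in $\Omega$. By Definition~\ref{def-harmonic}, inequality \eqref{eq-sol} holds for every $\varphi\in C_c^\infty(\Omega)$; applying it to both $\varphi$ and $-\varphi$ and using $\mathcal{E}(u,-\varphi)=-\mathcal{E}(u,\varphi)$ upgrades it to the equality $\mathcal{E}(u,\varphi)=\int_\Omega f\varphi\,\mathrm{d}x$ for all $\varphi\in C_c^\infty(\Omega)$. Restricting to nonnegative $\varphi$ shows at once that $u$ is a weak supersolution of $\mathcal{L}u=f$, and, via $\mathcal{E}(-u,\varphi)=-\mathcal{E}(u,\varphi)$, that $-u$ is a weak supersolution as well.

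\emph{Converse.} Suppose $u$ and $-u$ are both weak supersolutions of $\mathcal{L}u=f$ in $\Omega$. For every nonnegative $\varphi\in C_c^\infty(\Omega)$ the supersolution inequality $\mathcal{E}(u,\varphi)\ge\int_\Omega f\varphi\,\mathrm{d}x$ for $u$ and the one for $-u$ are mutually reverse, by virtue of $\mathcal{E}(-u,\varphi)=-\mathcal{E}(u,\varphi)$, and therefore force $\mathcal{E}(u,\varphi)=\int_\Omega f\varphi\,\mathrm{d}x$. To pass to an arbitrary $\varphi\in C_c^\infty(\Omega)$, choose $\psi\in C_c^\infty(\Omega)$ with $\psi\ge0$ and $\psi\ge\|\varphi\|_{L^\infty(\Omega)}$ on $\supp\varphi$ (possible since $\supp\varphi\Subset\Omega$); then $\psi$ and $\psi+\varphi$ are nonnegative members of $C_c^\infty(\Omega)$, so bilinearity gives
\[
\mathcal{E}(u,\varphi)=\mathcal{E}(u,\psi+\varphi)-\mathcal{E}(u,\psi)=\int_\Omega f(\psi+\varphi)\,\mathrm{d}x-\int_\Omega f\psi\,\mathrm{d}x=\int_\Omega f\varphi\,\mathrm{d}x .
\]
Hence \eqref{eq-sol} holds for all $\varphi\in C_c^\infty(\Omega)$, i.e.\ $u$ is a weak solution of $\mathcal{L}u=f$ in $\Omega$. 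Alternatively, one could invoke Proposition~\ref{prop-test-V} and test directly with the compactly supported functions $\varphi_\pm\in V^{s,2}(\Omega)$, avoiding the cutoff $\psi$. The argument is purely formal, relying only on the bilinearity of $\mathcal{E}$ and the stability of the test-function classes $C_c^\infty(\Omega)$ and $V^{s,2}(\Omega)$ under $\varphi\mapsto-\varphi$.
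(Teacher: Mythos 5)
Your proposal is correct and is essentially the argument the paper intends (the corollary is stated without proof, as an immediate consequence of Definition~\ref{def-harmonic}, the bilinearity of $\mathcal{E}$, and the enlargement of the test class in Proposition~\ref{prop-test-V}); your cutoff decomposition $\varphi=(\psi+\varphi)-\psi$ and the alternative via $\varphi_\pm\in V^{s,2}(\Omega)$ are both standard ways to pass from sign-definite to arbitrary test functions. The only caveat is one you inherit from the statement itself: for $f\neq 0$ the phrase ``$-u$ is a weak supersolution of $\mathcal{L}u=f$'' must be read as ``$-u$ is a weak supersolution of $\mathcal{L}v=-f$,'' since otherwise the two supersolution inequalities are not mutually reverse, and your converse step tacitly uses this reading.
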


When weak solutions are more regular than $W^{s, 2}_{\mathrm{loc}}(\Omega) \cap L^1_{2s}(\mathbb{R}^n)$, then the space of test functions can be further enlarged. We consider the spaces
\begin{align*}
V^{s, 2}_0(\Omega) &\coloneqq \overline{C^\infty_c(\Omega)}^{V^{s, 2}(\Omega)}, \\
W^{s, 2}_\Omega(\mathbb{R}^n) &\coloneqq \{u \in W^{s, 2}(\mathbb{R}^n): u=0\text{ a.e.\ on }\Omega^c\} = \{u \in V^{s, 2}(\mathbb{R}^n): u=0\text{ a.e.\ on }\Omega^c\}.
\end{align*}
It is clear that $V^{s, 2}_0(\Omega) \subset W^{s, 2}_\Omega(\mathbb{R}^n)$, but the equality does not hold in general; see Fiscella--Servadei--Valdinoci~\cite{FSV15}. The spaces coincide if the boundary of $\Omega$ satisfies certain regularity conditions.

\begin{lemma}\cite{FSV15}\label{lem-test}
Let $\Omega \subset \mathbb{R}^n$ be a bounded open set with a continuous boundary. Then
\begin{equation*}
V^{s, 2}_0(\Omega) = W^{s, 2}_\Omega(\mathbb{R}^n).
\end{equation*}
\end{lemma}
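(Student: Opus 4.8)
The plan is to establish the two inclusions separately. First I record that, on the subspace of functions that vanish a.e.\ outside $\Omega$, the norms $\|\cdot\|_{V^{s,2}(\Omega)}$ and $\|\cdot\|_{W^{s,2}(\mathbb{R}^n)}$ are equivalent: if $u=0$ a.e.\ on $\Omega^c$, then splitting the Gagliardo double integral for $[u]_{W^{s,2}(\mathbb{R}^n)}^2$ over the four products of $\Omega$ and $\Omega^c$, using the symmetry $x\leftrightarrow y$ and noting that the $\Omega^c\times\Omega^c$ piece vanishes, gives
\begin{equation*}
[u]_{V^{s,2}(\Omega)}^2 \leq [u]_{W^{s,2}(\mathbb{R}^n)}^2 = [u]_{V^{s,2}(\Omega)}^2 + \int_{\Omega}\int_{\Omega^c}\frac{|u(x)|^2}{|x-y|^{n+2s}}\,\mathrm{d}y\,\mathrm{d}x \leq 2[u]_{V^{s,2}(\Omega)}^2,
\end{equation*}
while the $L^2$-parts coincide. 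From this the inclusion $V^{s,2}_0(\Omega)\subseteq W^{s,2}_\Omega(\mathbb{R}^n)$ is immediate and requires no regularity of $\partial\Omega$: a sequence $\varphi_j\in C^\infty_c(\Omega)$ that is Cauchy in $V^{s,2}(\Omega)$ is Cauchy in $W^{s,2}(\mathbb{R}^n)$, hence converges there to some $u$, and since $W^{s,2}(\mathbb{R}^n)$-convergence entails $L^2(\mathbb{R}^n)$-convergence while each $\varphi_j$ vanishes on $\Omega^c$, the limit $u$ vanishes a.e.\ on $\Omega^c$, so $u\in W^{s,2}_\Omega(\mathbb{R}^n)$.

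For the reverse inclusion it therefore suffices to approximate any $u\in W^{s,2}_\Omega(\mathbb{R}^n)$ by functions in $C^\infty_c(\Omega)$ in the $W^{s,2}(\mathbb{R}^n)$-norm, and this is where I would use that a bounded open set with continuous boundary satisfies the segment condition. Cover the compact set $\overline{\Omega}$ by finitely many open sets $U_0,U_1,\dots,U_N$ with $U_0\Subset\Omega$, such that for each $i\geq1$ there is a nonzero vector $v_i$ pointing into $\Omega$ with the property that $\overline{\Omega}\cap\overline{U_i}$, translated slightly in the direction $v_i$, lies inside $\Omega$ (in a local chart where $\Omega\cap U_i=\{x_n>f(x')\}$ with $f$ continuous, take $v_i=e_n$ and shrink $U_i$ a little). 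Fix a smooth partition of unity $\{\psi_i\}_{i=0}^N$ with $\psi_i\in C^\infty_c(U_i)$ and $\sum_i\psi_i\equiv1$ on $\overline{\Omega}$; then $u=\sum_i\psi_i u$ a.e., each $\psi_i u$ lies in $W^{s,2}(\mathbb{R}^n)$ (multiplication by a compactly supported smooth function is bounded on $W^{s,2}(\mathbb{R}^n)$, the only nonobvious term $\int_{\mathbb{R}^n}\int_{\mathbb{R}^n}|u(y)|^2\frac{|\psi_i(x)-\psi_i(y)|^2}{|x-y|^{n+2s}}\,\mathrm{d}x\,\mathrm{d}y$ being finite because $s<1$), vanishes outside $\Omega$, and is supported in $\overline{\Omega}\cap\overline{U_i}$. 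For $i\geq1$ let $u_i^t$ be $\psi_i u$ translated by $tv_i$; then $\supp u_i^t$ is a compact subset of $\Omega$ for small $t>0$, and $u_i^t\to\psi_i u$ in $W^{s,2}(\mathbb{R}^n)$ as $t\to0^+$ by continuity of translations on $W^{s,2}(\mathbb{R}^n)$. For $i=0$ set $u_0^t:=\psi_0 u$, which already has compact support in $\Omega$. Finally, for $\delta<\operatorname{dist}(\supp u_i^t,\partial\Omega)$ the mollification $\rho_\delta\ast u_i^t$ belongs to $C^\infty_c(\Omega)$ and tends to $u_i^t$ in $W^{s,2}(\mathbb{R}^n)$ as $\delta\to0$, so $\sum_i\rho_\delta\ast u_i^t\in C^\infty_c(\Omega)$, and a diagonal argument (letting $\delta\to0$ and then $t\to0$) exhibits $u$ as a $W^{s,2}(\mathbb{R}^n)$-limit of $C^\infty_c(\Omega)$-functions. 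Since these all vanish outside $\Omega$, the norm equivalence shows they also converge to $u$ in $V^{s,2}(\Omega)$, whence $u\in V^{s,2}_0(\Omega)$.

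The main obstacle, and the only place where the hypothesis on $\partial\Omega$ is used, is the translation step: one must extract from ``continuous boundary'' the quantitative segment condition above so that the shifted supports land strictly inside $\Omega$. That some regularity is genuinely needed can be seen from standard counterexamples --- e.g.\ a ball with a lower-dimensional slit removed, where $C^\infty_c(\Omega)$-functions have vanishing trace on the slit, so the $W^{s,2}(\mathbb{R}^n)$-closure of $C^\infty_c(\Omega)$ is strictly smaller than $W^{s,2}_\Omega(\mathbb{R}^n)$ once $2s>1$. The remaining ingredients --- the fractional product rule and the continuity of translations and mollification on $W^{s,2}(\mathbb{R}^n)$, all of which follow from the identification $W^{s,2}(\mathbb{R}^n)=H^s(\mathbb{R}^n)$ --- are standard.
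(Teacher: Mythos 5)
Your proof is correct, and since the paper only cites \cite{FSV15} for this lemma, the relevant comparison is with that reference: your argument (norm equivalence on functions vanishing a.e.\ outside $\Omega$ for the easy inclusion, then partition of unity, inward translation via the segment property of continuous-boundary domains, and mollification for the density) is essentially the proof given there. The only point worth tightening is the identification of the $V^{s,2}(\Omega)$-limit with the $W^{s,2}(\mathbb{R}^n)$-limit in the first inclusion, but this follows since the $V^{s,2}(\Omega)$-norm vanishes only for functions that are zero a.e.\ on all of $\mathbb{R}^n$.
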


The space $V^{s, 2}_0(\Omega)$ can be used as a space of test functions for equation $\mathcal{L}u=f$, provided that $u$ is sufficiently regular. A slight modification of the proof of \cite[Proposition~2.4]{BBK24} yields the following result.

\begin{proposition}\label{prop-test}
Let $\Omega, \Omega' \subset \mathbb{R}^n$ be such that $\Omega \Subset \Omega'$. Let $f \in L^\infty(\Omega)$ and $u \in W^{s, 2}(\Omega') \cap L^1_{2s}(\mathbb{R}^n)$. Then $u$ is a weak solution (resp.\ weak supersolution) of $\mathcal{L}u=f$ in $\Omega$ if and only if
\begin{equation*}
\mathcal{E}(u, \varphi) \geq \int_\Omega f\varphi \,\mathrm{d}x
\end{equation*}
for all $\varphi \in V^{s, 2}_0(\Omega)$ \textup{(}resp.\ for all nonnegative $\varphi \in V^{s, 2}_0(\Omega)$\textup{)}.
\end{proposition}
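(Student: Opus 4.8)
The ``if'' direction is immediate: $C_c^\infty(\Omega) \subset V^{s,2}_0(\Omega)$ by definition of the latter space, and $\Omega \Subset \Omega'$ gives $u \in W^{s,2}(\Omega') \cap L^1_{2s}(\mathbb{R}^n) \subset W^{s,2}_{\mathrm{loc}}(\Omega) \cap L^1_{2s}(\mathbb{R}^n)$, so $u$ is admissible in Definition~\ref{def-harmonic}. For the ``only if'' direction I would argue by density, following the proof of Proposition~\ref{prop-test-V} but dropping the condition $\supp\varphi \Subset \Omega$ there and compensating with the stronger hypothesis $u \in W^{s,2}(\Omega')$. Fix $\varphi \in V^{s,2}_0(\Omega)$ and choose $\varphi_j \in C_c^\infty(\Omega)$ with $\varphi_j \to \varphi$ in $V^{s,2}(\Omega)$; in the supersolution case, where $\varphi \geq 0$, I would additionally arrange $\varphi_j \geq 0$. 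Securing this is the ``slight modification'' of \cite[Proposition~2.4]{BBK24}: elements of $V^{s,2}_0(\Omega)$ vanish a.e.\ on $\Omega^c$, so on that class the norms of $V^{s,2}(\Omega)$ and $W^{s,2}(\mathbb{R}^n)$ are comparable, and one may replace a given approximating sequence by the mollifications of its positive parts, using that taking positive parts is continuous on $W^{s,2}(\mathbb{R}^n)$, that mollification preserves nonnegativity, and that for a sufficiently small mollification parameter the support stays inside $\Omega$. Plugging $\varphi_j$ into the definition gives $\mathcal{E}(u, \varphi_j) \geq \int_\Omega f \varphi_j \,\mathrm{d}x$ (with equality, resp.\ no sign restriction on $\varphi_j$, in the weak-solution case), and the task is to pass to the limit $j \to \infty$ on both sides.

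The crux is the a priori bound
\begin{equation*}
|\mathcal{E}(u, \psi)| \leq C \bigl( \|u\|_{W^{s,2}(\Omega')} + T(u) \bigr) \|\psi\|_{V^{s,2}(\Omega)} \qquad \text{for all } \psi \in V^{s,2}_0(\Omega),
\end{equation*}
where $C = C(n,s,\Lambda,\Omega,\Omega')$ and $T(u) \coloneqq \int_{\mathbb{R}^n} (1+|y|)^{-n-2s}|u(y)| \,\mathrm{d}y < \infty$; this also shows that $\mathcal{E}(u,\psi)$ is absolutely convergent, which must be checked first. To prove the bound, fix an intermediate open set $D$ with $\Omega \Subset D \Subset \Omega'$, put $d_0 \coloneqq \mathrm{dist}(\Omega, \partial D) > 0$, use $0 \leq k(x,y) \leq \Lambda |x-y|^{-n-2s}$, and, since $\psi = 0$ on $\Omega^c$, split the region of integration (up to the symmetry $x \leftrightarrow y$) into $\Omega \times \Omega$, $\Omega \times (D \setminus \overline{\Omega})$, and $\Omega \times D^c$. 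On the first two blocks both variables lie in $\Omega'$, so Cauchy--Schwarz controls the contribution by $\Lambda [u]_{W^{s,2}(\Omega')} \|\psi\|_{V^{s,2}(\Omega)}$; on the second block this uses $\iint_{\Omega \times \Omega^c} |x-y|^{-n-2s}|\psi(x)|^2 \,\mathrm{d}y\,\mathrm{d}x \leq [\psi]_{V^{s,2}(\Omega)}^2$ since $\psi(y) = 0$ there. On the far block $\Omega \times D^c$ one has $|x-y| \geq d_0$ and, because $\Omega$ is bounded, also $|x-y| \gtrsim 1 + |y|$; using $|u(x)-u(y)| \leq |u(x)| + |u(y)|$ and $\psi = 0$ on $D^c$, the $|u(x)|$-part is bounded by $\Lambda C(n,s) d_0^{-2s} \|u\|_{L^2(\Omega)} \|\psi\|_{L^2(\Omega)}$ and the $|u(y)|$-part by $\Lambda C(n,s) T(u) |\Omega|^{1/2} \|\psi\|_{L^2(\Omega)}$.

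Granting the bound, apply it to $\psi = \varphi_j - \varphi \in V^{s,2}_0(\Omega)$ to get $\mathcal{E}(u,\varphi_j) \to \mathcal{E}(u,\varphi)$, and note that $|\int_\Omega f(\varphi_j - \varphi)\,\mathrm{d}x| \leq \|f\|_{L^\infty(\Omega)} |\Omega|^{1/2} \|\varphi_j - \varphi\|_{L^2(\Omega)} \to 0$ because $\Omega$ is bounded and $\|\cdot\|_{L^2(\Omega)} \leq \|\cdot\|_{V^{s,2}(\Omega)}$. Letting $j \to \infty$ in $\mathcal{E}(u,\varphi_j) \geq \int_\Omega f\varphi_j\,\mathrm{d}x$ yields $\mathcal{E}(u,\varphi) \geq \int_\Omega f\varphi\,\mathrm{d}x$, and in the weak-solution case the same argument applied to $-\varphi_j$ gives the reverse inequality. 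I expect the main obstacle to be the a priori bound above, and within it the interplay between the $W^{s,2}(\Omega')$-regularity used near $\partial\Omega$ on the block $\Omega \times (D \setminus \overline{\Omega})$ and the comparison $|x-y|^{-n-2s} \lesssim (1+|y|)^{-n-2s}$ on the far block, which is precisely what lets the $L^1_{2s}$-tail of $u$ enter; the nonnegative smooth approximation needed in the supersolution case is a secondary technical point.
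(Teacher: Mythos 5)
Your proposal is correct and follows essentially the same route as the paper: approximate $\varphi \in V^{s,2}_0(\Omega)$ by compactly supported smooth functions and establish continuity of $\psi \mapsto \mathcal{E}(u,\psi)$ on $V^{s,2}_0(\Omega)$ by splitting the integral into a near region controlled by $[u]_{W^{s,2}(\Omega')}$ via Cauchy--Schwarz and a far region controlled by the $L^1_{2s}$-tail of $u$. The only (immaterial) difference is that you secure nonnegativity by mollifying positive parts, whereas the paper tests directly with the non-smooth functions $(\varphi_j)_+$, which are admissible by Proposition~\ref{prop-test-V} since their supports are compactly contained in $\Omega$.
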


In Proposition~\ref{prop-test}, $\Omega$ is bounded by assumption, but $\Omega'$ may be unbounded.

\begin{proof}
Suppose that $u$ is a weak supersolution of $\mathcal{L}u=f$ in $\Omega$ and let $0 \leq \varphi \in V_0^{s, 2}(\Omega)$. Then there exists a sequence of functions $\{\varphi_j\} \subset C_c^{\infty}(\Omega)$ such that $\varphi_j \to \varphi$ in $V^{s, 2}(\Omega)$ as $j \to \infty$. Since $(\varphi_j)_+=0=\varphi$ a.e.\ on $\Omega^c$, we have
\begin{equation*}
[(\varphi_j)_+-\varphi]_{W^{s, 2}(\Omega')}^2 \leq [(\varphi_j)_+-\varphi]_{W^{s, 2}(\mathbb{R}^n)}^2 \leq 2[(\varphi_j)_+-\varphi]_{V^{s, 2}(\Omega)}^2.
\end{equation*}
This together with Lemma~2.5 in \cite{BBK24} shows that $(\varphi_j)_+ \to \varphi$ in $W^{s, 2}(\Omega')$ as $j \to \infty$.

We now have
\begin{align*}
|\mathcal{E}(u, (\varphi_j)_+)-\mathcal{E}(u, \varphi)|
&\leq \Lambda \int_\Omega \int_{\Omega'} \frac{|u(x)-u(y)||((\varphi_j)_+-\varphi)(x) - ((\varphi_j)_+-\varphi)(y)|}{|x-y|^{n+2s}} \,\mathrm{d}y \,\mathrm{d}x \\
&\quad + \Lambda \int_\Omega \int_{\mathbb{R}^n \setminus \Omega'} \frac{(|u(x)|+|u(y)|) |((\varphi_j)_+-\varphi)(x)|}{|x-y|^{n+2s}} \,\mathrm{d}y \,\mathrm{d}x \\
&\eqqcolon I_1 + I_2.
\end{align*}
Let $d=\mathrm{dist}(\Omega, \partial\Omega')>0$ and $R=\sup_{x \in \Omega}|x|<\infty$. Then
\begin{equation*}
I_1 \leq \Lambda [u]_{W^{s, 2}(\Omega')} [(\varphi_j)_+-\varphi]_{W^{s, 2}(\Omega')}
\end{equation*}
and
\begin{align*}
I_2
&\leq \Lambda \int_\Omega |u(x)| |((\varphi_j)_+-\varphi)(x)| \int_{\mathbb{R}^n \setminus B_d(x)} \frac{\mathrm{d}y}{|x-y|^{n+2s}} \,\mathrm{d}x \\
&\quad + \Lambda \int_{\Omega} |((\varphi_j)_+-\varphi)(x)| \left( \int_{B_{2R}} \frac{|u(y)|}{d^{n+2s}} \,\mathrm{d}y + \int_{B_{2R}^c} \frac{|u(y)|}{(|y|/2)^{n+2s}} \,\mathrm{d}y \right) \mathrm{d}x \\
&\leq \Lambda \left( \frac{|\mathbb{S}^{n-1}|}{2s d^{2s}} \|u\|_{L^2(\Omega)} + \frac{|\Omega|^{1/2}}{d^{n+2s}} \|u\|_{L^1(B_{2R})} + \frac{2^{n} |\Omega|^{1/2}}{R^{2s}} \mathrm{Tail}(u; 0, 2R) \right) \|(\varphi_j)_+ - \varphi\|_{L^2(\Omega)}.
\end{align*}
Moreover,
\begin{equation*}
    \left| \int_\Omega f ((\varphi_j)_+ - \varphi) \,\mathrm{d}x \right| \leq \|f\|_{L^\infty(\Omega)} |\Omega|^{1/2} \|(\varphi_j)_+ - \varphi\|_{L^2(\Omega)}.
\end{equation*}
Since $\supp (\varphi_j)_+ \Subset \Omega$, it follows from Proposition~\ref{prop-test-V} that
\begin{equation*}
    \mathcal{E}(u, \varphi)=\lim_{j \to \infty}\mathcal{E}(u, (\varphi_j)_+) \geq \lim_{j \to \infty}\int_{\Omega}f(\varphi_j)_+ \,\mathrm{d}x =\int_{\Omega}f\varphi \,\mathrm{d}x. \qedhere
\end{equation*}
\end{proof}

The following lemma provides a useful tool for testing the equation.

\begin{lemma}\label{lem-test-V}
Let $\Omega \subset \mathbb{R}^n$ be a bounded open set with a continuous boundary and let $\Omega' \subset \mathbb{R}^n$ be an open set such that $\Omega \Subset \Omega'$. If $u \in W^{s, 2}(\Omega')$ and $u=0$ a.e.\ outside $\Omega$, then $u \in V^{s, 2}_0(\Omega)$.
\end{lemma}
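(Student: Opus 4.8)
The plan is to show that $u$ can be approximated in $W^{s,2}(\Omega')$ by functions in $C^\infty_c(\Omega)$, so that $u \in V^{s,2}_0(\Omega)$ by definition. The starting observation is that since $u = 0$ a.e.\ outside $\Omega$ and $u \in W^{s,2}(\Omega')$, we actually have $u \in W^{s,2}(\mathbb{R}^n)$: indeed the $L^2$ and Gagliardo parts over $\mathbb{R}^n$ are controlled by those over $\Omega'$ together with the tail interaction between $\Omega$ and $\mathbb{R}^n \setminus \Omega'$, which is finite because $\mathrm{dist}(\Omega, \partial\Omega')>0$ and $u \in L^2(\Omega) \subset L^1(\Omega) \subset L^1_{2s}(\mathbb{R}^n)$ (here $\Omega$ is bounded). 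Consequently $u \in W^{s,2}_\Omega(\mathbb{R}^n)$, and by Lemma~\ref{lem-test} (applicable since $\Omega$ is bounded with continuous boundary), $u \in V^{s,2}_0(\Omega)$.

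So the real content is just to verify the first step, namely that vanishing a.e.\ outside $\Omega$ upgrades membership in $W^{s,2}(\Omega')$ to membership in $W^{s,2}(\mathbb{R}^n)$. I would write $[u]^2_{W^{s,2}(\mathbb{R}^n)}$ as the sum of the double integral over $\Omega' \times \Omega'$, which is $[u]^2_{W^{s,2}(\Omega')} < \infty$, plus twice the cross term $\int_\Omega \int_{\mathbb{R}^n \setminus \Omega'} \frac{|u(x)|^2}{|x-y|^{n+2s}}\,\mathrm{d}y\,\mathrm{d}x$ (using $u \equiv 0$ off $\Omega$, only $x \in \Omega$, $y \notin \Omega'$ contributes, and symmetrically). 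This cross term is bounded, exactly as in the estimate of $I_2$ in the proof of Proposition~\ref{prop-test}, by a constant times $\|u\|^2_{L^2(\Omega)}$ using $\int_{\mathbb{R}^n \setminus B_d(x)} |x-y|^{-n-2s}\,\mathrm{d}y = \frac{|\mathbb{S}^{n-1}|}{2s d^{2s}}$ with $d = \mathrm{dist}(\Omega, \partial\Omega')$. Hence $u \in W^{s,2}(\mathbb{R}^n)$, and since $u=0$ a.e.\ on $\Omega^c$, $u \in W^{s,2}_\Omega(\mathbb{R}^n)$.

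I do not anticipate a genuine obstacle here; the lemma is essentially a bookkeeping reduction to Lemma~\ref{lem-test}. The only point requiring a little care is making sure all the off-diagonal integrals are finite, which is why one uses boundedness of $\Omega$ (so that $\mathrm{Tail}$-type quantities of $u$ are finite, $u$ being in $L^2(\Omega)$ with $\Omega$ of finite measure) and the positive distance between $\Omega$ and the complement of $\Omega'$. Once $u \in W^{s,2}_\Omega(\mathbb{R}^n)$ is established, the continuous-boundary hypothesis on $\Omega$ lets us invoke Lemma~\ref{lem-test} to conclude $u \in V^{s,2}_0(\Omega)$, which is the claim.
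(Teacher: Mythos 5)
Your proposal is correct and follows essentially the same route as the paper: the paper bounds $[u]_{V^{s,2}(\Omega)}^2$ by $[u]_{W^{s,2}(\Omega')}^2$ plus the same cross term $\int_\Omega\int_{\mathbb{R}^n\setminus\Omega'}|u(x)|^2|x-y|^{-n-2s}\,\mathrm{d}y\,\mathrm{d}x \leq \frac{|\mathbb{S}^{n-1}|}{2sd^{2s}}\|u\|_{L^2(\Omega)}^2$ and then invokes Lemma~\ref{lem-test}, exactly as you do. Working with $[u]_{W^{s,2}(\mathbb{R}^n)}$ instead of $[u]_{V^{s,2}(\Omega)}$ is an immaterial difference since the two are comparable for functions vanishing a.e.\ off $\Omega$.
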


\begin{proof}
Let $d$ denote the distance between $\Omega$ and $\partial \Omega'$. Then
\begin{align*}
    [u]_{V^{s, 2}(\Omega)}^2
    &\leq [u]_{W^{s, 2}(\Omega')}^2 + \int_{\Omega} \int_{\mathbb{R}^n \setminus \Omega'} \frac{|u(x)|^2}{|x-y|^{n+2s}} \,\mathrm{d}y \,\mathrm{d}x \\
    &\leq [u]_{W^{s, 2}(\Omega')}^2 + \int_{\Omega} |u(x)|^2 \int_{\mathbb{R}^n \setminus B_d(x)} \frac{\mathrm{d}y}{|x-y|^{n+2s}} \,\mathrm{d}x \\
    &= [u]_{W^{s, 2}(\Omega')}^2 + \frac{|\mathbb{S}^{n-1}|}{2s d^{2s}} \|u\|_{L^2(\Omega)}^2 < \infty.
\end{align*}
This shows that $u \in V^{s, 2}(\Omega)$. The desired result follows from Lemma~\ref{lem-test}.
\end{proof}

The following lemma will also be useful.

\begin{lemma}\cite[Lemma~7]{KKP17}\label{lem-truncation}
Let $u$ be a weak supersolution of $\mathcal{L}u=0$ in $\Omega$. Then, for any $k \in \mathbb{R}$, the function $\min\{u, k\}$ is also a weak supersolution in $\Omega$.
\end{lemma}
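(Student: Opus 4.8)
The plan is to reduce the statement to testing the weak supersolution inequality for $u$ against a suitable nonnegative test function and then to exploit the symmetry of the kernel. Write $v\coloneqq\min\{u,k\}$. Since $t\mapsto\min\{t,k\}$ is $1$-Lipschitz, $|v(x)-v(y)|\le|u(x)-u(y)|$, and clearly $|v|\le|u|+|k|$, so $v\in W^{s,2}_{\mathrm{loc}}(\Omega)\cap L^1_{2s}(\mathbb{R}^n)$; it therefore remains only to show that $\mathcal{E}(v,\varphi)\ge0$ for every nonnegative $\varphi\in C^\infty_c(\Omega)$. By Proposition~\ref{prop-test-V}, the supersolution $u$ may be tested against any nonnegative $\psi\in V^{s,2}(\Omega)$ with $\supp\psi\Subset\Omega$, and I will use this enlarged class.

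The core of the argument is the following computation, carried out first at the formal level with the (generally inadmissible) choice $\psi=\varphi\chi_A$, where $A\coloneqq\{u<k\}$ and $B\coloneqq\{u\ge k\}$. Splitting $\mathbb{R}^n\times\mathbb{R}^n$ into the four pieces $A\times A$, $A\times B$, $B\times A$, $B\times B$, using $v=u$ on $A$ and $v=k$ on $B$, and using the symmetry $k(x,y)=k(y,x)$ to identify the $A\times B$ and $B\times A$ contributions, one gets
\begin{equation*}
\mathcal{E}(v,\varphi)-\mathcal{E}(u,\varphi\chi_A)=2\int_A\int_B\bigl(\varphi(x)\,(u(y)-k)+\varphi(y)\,(k-u(x))\bigr)\,k(x,y)\,\mathrm{d}y\,\mathrm{d}x.
\end{equation*}
On $A\times B$ we have $u(y)-k\ge0$, $k-u(x)>0$, and $\varphi\ge0$, so the right-hand side is nonnegative; since also $\varphi\chi_A\ge0$, this would give $\mathcal{E}(v,\varphi)\ge\mathcal{E}(u,\varphi\chi_A)\ge0$ were $\varphi\chi_A$ an admissible test function.

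To make this rigorous, the indicator $\chi_A=\chi_{\{u<k\}}$ has to be regularized, since level sets of a $W^{s,2}_{\mathrm{loc}}$ function need not belong to any fractional Sobolev space, so that $\varphi\chi_A\notin V^{s,2}(\Omega)$ in general. I would fix a non-increasing Lipschitz function $T_\delta\colon\mathbb{R}\to[0,1]$ equal to $1$ on $(-\infty,k-\delta]$ and to $0$ on $[k,\infty)$, with $T_\delta\to\chi_{(-\infty,k)}$ pointwise as $\delta\to0$, and take $\psi_\delta\coloneqq\varphi\,T_\delta(u)$. Then $\psi_\delta$ is nonnegative, supported in $\supp\varphi\Subset\Omega$, and lies in $V^{s,2}(\Omega)$ (by the product and chain rules, using that $T_\delta$ is Lipschitz, $u\in W^{s,2}_{\mathrm{loc}}(\Omega)$, and $\varphi$ is smooth with compact support), so $\mathcal{E}(u,\psi_\delta)\ge0$ by Proposition~\ref{prop-test-V}. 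Repeating the region decomposition with $T_\delta(u)$ in place of $\chi_A$ produces the same favorable structure plus error terms supported in the transition zone $\{k-\delta<u<k\}$; one then has to show that these error terms vanish as $\delta\to0$, which would yield $\mathcal{E}(v,\varphi)\ge0$.

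The step I expect to be the main obstacle is precisely this passage to the limit $\delta\to0$. Once one argument of $\mathcal{E}$ carries a (truncated) indicator rather than a smooth function, the factor built from $\varphi$ no longer supplies the near-diagonal cancellation that makes $\mathcal{E}(u,\varphi)$ absolutely convergent for $\varphi\in C_c^\infty$, and the Lipschitz constant of $T_\delta$ blows up like $1/\delta$; one must therefore split each double integral into a near-diagonal part, estimated via $[u]_{W^{s,2}(G)}$ on a neighborhood $G\Subset\Omega$ of $\supp\varphi$ together with $k(x,y)\le\Lambda|x-y|^{-n-2s}$, and a far part, estimated via $u\in L^1_{2s}(\mathbb{R}^n)$ and the compact support of $\varphi$, and combine these with the smallness of $\{k-\delta<u<k\}$ before passing to the limit. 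An alternative route, which runs into the same difficulty, is to first prove that the positive part of a weak subsolution is again a weak subsolution and then deduce the lemma from $\min\{u,k\}=k-(k-u)_+$, using that $-u$, hence $k-u$, is a weak subsolution.
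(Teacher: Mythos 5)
First, a remark on the comparison: the paper does not prove this lemma at all; it is quoted verbatim from \cite[Lemma~7]{KKP17}. So I am judging your sketch against the standard argument. Your overall strategy --- test the supersolution inequality for $u$ against $\varphi$ times a regularized indicator of $\{u<k\}$, exploit the symmetry of $k(x,y)$ and the sign structure on $\{u<k\}\times\{u\ge k\}$, and pass to the limit --- is exactly the standard route, your formal identity for $\mathcal{E}(v,\varphi)-\mathcal{E}(u,\varphi\chi_A)$ is correct, and the admissibility of $\psi_\delta=\varphi\,T_\delta(u)$ in $V^{s,2}(\Omega)$ via Proposition~\ref{prop-test-V} is fine.

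The genuine gap is the one you flag, and the mechanism you propose for closing it would not work. The dangerous error term lives on $M_\delta\times B$ with $M_\delta=\{k-\delta<u<k\}$ and $B=\{u\ge k\}$; writing $S_\delta=1-T_\delta$, the difference of integrands there equals $(u(x)-k)\varphi(x)S_\delta(u(x))+(k-u(x))\varphi(y)+(u(y)-k)\varphi(x)T_\delta(u(x))$, and the first summand carries the factor $\varphi(x)$ rather than $\varphi(x)-\varphi(y)$, so there is no near-diagonal cancellation. Even after bounding $k-u(x)\le|u(x)-u(y)|$ and applying Cauchy--Schwarz with $[u]_{W^{s,2}(G)}$, you are left with $\bigl(\int_{M_\delta}\int_B|x-y|^{-n-2s}\,\mathrm{d}y\,\mathrm{d}x\bigr)^{1/2}$, which is infinite whenever $M_\delta$ and $B$ meet along an $(n-1)$-dimensional interface and $s\ge1/2$; the smallness of $|M_\delta|$ does not save this. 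The correct fix is a pointwise recombination \emph{before} integrating: group the first two summands as $(k-u(x))\bigl(\varphi(y)-\varphi(x)S_\delta(u(x))\bigr)\ge(k-u(x))S_\delta(u(x))(\varphi(y)-\varphi(x))\ge-S_\delta(u(x))\,|u(x)-u(y)|\,|\varphi(x)-\varphi(y)|$, using $0\le S_\delta\le1$, $\varphi\ge0$, and $0\le k-u(x)\le u(y)-u(x)$. Treating $M_\delta\times M_\delta$ and $A_\delta\times M_\delta$ similarly (on the latter the difference is already nonnegative, and on the former the monotonicity of $S_\delta$ makes the dangerous part nonnegative), one finds that the whole difference of integrands is bounded below by $-c_\delta(x,y)\,|u(x)-u(y)|\,|\varphi(x)-\varphi(y)|\,k(x,y)$ with $0\le c_\delta\le1$ vanishing pointwise as $\delta\to0$ (it is supported where $x$ or $y$ lies in $M_\delta$). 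Since $|u(x)-u(y)||\varphi(x)-\varphi(y)|k(x,y)$ is precisely the absolutely convergent integrand of $\mathcal{E}(u,\varphi)$, dominated convergence yields $\liminf_{\delta\to0}\bigl(\mathcal{E}(v,\varphi)-\mathcal{E}(u,\varphi T_\delta(u))\bigr)\ge0$, hence $\mathcal{E}(v,\varphi)\ge0$. So your framework is completable, but the decisive step is this recombination plus domination by the integrand of $\mathcal{E}(u,\varphi)$, not a near/far splitting combined with $|M_\delta|\to0$.
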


We also recall the definition of the $\mathcal{L}$-super- and $\mathcal{L}$-subharmonic functions from \cite[Definition~8.1]{BBK24}. See also \cite[Definition~1]{KKP17}.

\begin{definition}\label{def-superharmonic}
A measurable function $u: \mathbb{R}^n \to [-\infty, \infty]$ is \emph{$\mathcal{L}$-superharmonic} in $\Omega$ if it satisfies the following:
\begin{enumerate}[(i)]
\item $u < \infty$ a.e.\ in $\mathbb{R}^n$ and $u>-\infty$ everywhere in $\Omega$,
\item $u$ is lower semicontinuous in $\Omega$,
\item if $G \Subset \Omega$ is an open set and $v \in C(\overline{G})$ is a weak solution of $\mathcal{L}v=0$ in $G$ such that $v_+ \in L^{\infty}(\mathbb{R}^n)$ and $u \geq v$ on $G^c$, then $u \geq v$ in $G$,
\item $u_- \in L_{2s}^1(\mathbb{R}^n)$.
\end{enumerate}
A function $u$ is \emph{$\mathcal{L}$-subharmonic} in $\Omega$ if $-u$ is $\mathcal{L}$-superharmonic in $\Omega$.
\end{definition}

By \cite[Corollary~7]{KKP17}, a function $u$ is $\mathcal{L}$-harmonic in $\Omega$ if and only if it is both $\mathcal{L}$-super- and $\mathcal{L}$-subharmonic in $\Omega$. We end the section with the following result from \cite{KKP17}. The fractional Sobolev spaces $W^{\sigma, q}$ below are defined analogously to $W^{s, 2}$.

\begin{theorem}\cite[Theorem~1(ii)]{KKP17}\label{thm-integrability}
If $u$ is $\mathcal{L}$-superharmonic in $\Omega$, then $u \in W^{\sigma, q}_{\mathrm{loc}}(\Omega) \cap L^t_{\mathrm{loc}}(\Omega) \cap L^{1}_{2s}(\mathbb{R}^n)$ for any
\begin{equation*}
   \sigma \in (0, s), \quad q \in (0, n/(n-s)), \quad\text{and}\quad t \in
   \begin{cases}
       (0, \frac{n}{n-2s}) &\text{if }2s<n, \\
       (0, \infty) &\text{if }2s\geq n.
   \end{cases}
\end{equation*}
\end{theorem}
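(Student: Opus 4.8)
\emph{Setup and reductions.}
The $W^{\sigma,q}$ and $L^t$ parts are local, so I would fix a ball $B_{2R}\Subset\Omega$ and establish the regularity on $B_R$. Since $u$ is lower semicontinuous and finite on the compact set $\overline{B}_{2R}$, it is bounded below there, say $u\ge-M$ with $M\ge0$; replacing $u$ by $u+M$, which is again $\mathcal{L}$-superharmonic (adding a constant changes neither the form $\mathcal{E}(\cdot,\varphi)$ nor the comparison property~(iii) of Definition~\ref{def-superharmonic}), I may assume $u\ge0$ on $B_{2R}$. The tail bound then splits: $u_-\in L^1_{2s}(\mathbb{R}^n)$ is precisely condition~(iv), whereas $u_+\in L^1_{2s}(\mathbb{R}^n)$ I would deduce from a comparison argument, dominating from above the bounded truncations introduced below, on balls contained in $\Omega$, by explicit $\mathcal{L}$-harmonic functions and letting the truncation level tend to infinity. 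Next, for $j\in\mathbb{N}$ I put $u_j:=\min\{u,j\}$. Each $u_j$ is still $\mathcal{L}$-superharmonic (the minimum of a supersolution with a constant retains the comparison property, and $(u_j)_-\le u_-$), is bounded on $B_{2R}$ with $u_j\ge0$ there, and, by the obstacle-problem theory for $\mathcal{L}$ in~\cite{KKP17} — a locally bounded $\mathcal{L}$-superharmonic function agrees locally with the lower-semicontinuous solution of an obstacle problem, hence is a weak supersolution of class $W^{s,2}_{\mathrm{loc}}$ — each $u_j$ is a nonnegative weak supersolution of $\mathcal{L}u=0$ in $B_{2R}$ with $u_j\in W^{s,2}_{\mathrm{loc}}(B_{2R})\cap L^1_{2s}(\mathbb{R}^n)$. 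Since $u_j\uparrow u$ pointwise, it remains to prove bounds on $B_R$ uniform in $j$ and pass to the limit by monotone convergence and Fatou.

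\emph{The $L^t$ estimate.}
For nonnegative weak supersolutions, the nonlocal weak Harnack inequality (see~\cite{DCKP16,KKP17}) yields, for every $t$ in the stated range (and since $n\ge2$ forces $2s<n$, only the first alternative occurs),
\[
\left(\frac{1}{|B_R|}\int_{B_R}u_j^t\,\mathrm{d}x\right)^{1/t}\le C\Big(\inf_{B_R}u_j+\mathrm{Tail}\big((u_j)_-;0,R\big)\Big).
\]
The right-hand side is nondecreasing in $j$ and bounded above by $C\big(\inf_{B_R}u+\mathrm{Tail}(u_-;0,R)\big)<\infty$, because $u>-\infty$ in $\Omega$; hence monotone convergence gives $u\in L^t(B_R)$.

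\emph{The $W^{\sigma,q}$ estimate.}
For the Sobolev part I would run a nonlocal Caccioppoli estimate. Testing the supersolution inequality for the bounded function $u_j$ against $\varphi=\eta^2(u_j+d)^{-\gamma}$, with $d>0$, a suitable exponent $\gamma>0$, and a cutoff $\eta\in C^\infty_c(B_{2R})$ equal to $1$ on $B_R$ — an admissible test function by Proposition~\ref{prop-test-V}, since $u_j+d$ is bounded and bounded away from $0$ — and using the elementary pointwise inequalities for the kernel integrand together with the $L^t$ bound above, one should obtain
\[
\int_{B_R}\int_{B_R}\frac{\big|(u_j+d)^{\beta}(x)-(u_j+d)^{\beta}(y)\big|^2}{|x-y|^{n+2s}}\,\mathrm{d}x\,\mathrm{d}y\le C,\qquad\beta:=\frac{1-\gamma}{2}\in(0,1/2),
\]
with $C$ independent of $j$. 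Combining this $W^{s,2}$-bound for the nonlinear powers $(u_j+d)^{\beta}$ with the $L^t$-bound, a Hölder inequality and the fractional Sobolev embedding then convert it into a bound for $[u_j]_{W^{\sigma,q}(B_R)}$ uniform in $j$, valid for every $\sigma\in(0,s)$ and $q\in(0,n/(n-s))$. Letting $j\to\infty$ and applying Fatou to the Gagliardo integrand yields $u\in W^{\sigma,q}(B_R)$, which completes the proof.

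\emph{Expected main obstacle.}
The delicate point is the Sobolev estimate: unlike in the local case, the ``chain rule'' for $(u_j+d)^{\beta}$ couples to the singular kernel only through one-sided algebraic inequalities, so the long-range tail contributions have to be tracked carefully, and it is precisely this bookkeeping — together with the exponents in the fractional Sobolev embedding — that produces the ranges $q<n/(n-s)$ and $t<n/(n-2s)$. A second, more structural difficulty is the input used in the setup, namely the assertion that a locally bounded $\mathcal{L}$-superharmonic function is a weak supersolution of class $W^{s,2}_{\mathrm{loc}}$; this rests on the existence and regularity theory for the obstacle problem associated with $\mathcal{L}$.
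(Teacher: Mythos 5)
The paper itself offers no proof of this statement: it is imported verbatim from \cite[Theorem~1(ii)]{KKP17}. Measured against the actual proof in that reference, your outline reproduces the correct architecture: truncate to $u_j=\min\{u,j\}$, use the obstacle-problem theory to identify bounded $\mathcal{L}$-superharmonic functions with $W^{s,2}_{\mathrm{loc}}$ weak supersolutions, obtain $j$-uniform $L^t$ bounds from the weak Harnack inequality, obtain $j$-uniform fractional Caccioppoli bounds for the powers $(u_j+d)^{\beta}$ and convert them into $W^{\sigma,q}$ bounds by H\"older, and pass to the limit. Those steps are sound modulo the bookkeeping you acknowledge.

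The one step that would fail as written is your treatment of $u_+\in L^1_{2s}(\mathbb{R}^n)$ ``by a comparison argument, dominating from above the bounded truncations \dots by explicit $\mathcal{L}$-harmonic functions.'' For a general measurable kernel $k$ there are no explicit $\mathcal{L}$-harmonic functions --- this is precisely why the present paper must construct even the fundamental solution by a compactness argument --- and, more importantly, an upper barrier for $u_j$ on a ball $B\Subset\Omega$ controls $u_j$ inside $B$ only, whereas $\int_{\mathbb{R}^n}(u)_+(y)(1+|y|)^{-n-2s}\,\mathrm{d}y$ sees the values of $u$ far outside $\Omega$, where no equation and no comparison are available. The correct tool is the tail estimate for weak supersolutions: testing $\mathcal{E}(u_j,\varphi)\ge 0$ in a ball $B_r\Subset\Omega$ with a fixed nonnegative cutoff $\varphi$ supported in $B_{r/2}$ and isolating the long-range interaction term shows that $\mathrm{Tail}((u_j)_+;x_0,r)$ is bounded by local norms of $u_j$ on $B_r$ plus $\mathrm{Tail}((u_j)_-;x_0,r)$; both are uniform in $j$ by your $L^t$ step and by condition~(iv) of Definition~\ref{def-superharmonic}, and monotone convergence then yields $u_+\in L^1_{2s}(\mathbb{R}^n)$. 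With this replacement your sketch matches the proof in \cite{KKP17}.
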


\section{Singular solutions to the fractional Laplace equation}\label{sec-fL}

In this section, we prove Theorem~\ref{thm-sing-Liouv-s}, the Liouville theorem for singular solutions to the fractional Laplace equation. As mentioned in the introduction, the main ingredients of the proof are a nonlocal Caccioppoli--Cimmino--Weyl lemma, the classical Liouville theorem, and a B\^ocher type Theorem~\ref{thm-weak-Bocher}. The proof of Theorem~\ref{thm-weak-Bocher} will be provided in Section~\ref{sec-weak-Bocher}. We recall the Caccioppoli--Cimmino--Weyl lemma and the Liouville theorem for the fractional Laplacian in the following theorems.

\begin{theorem}[Nonlocal Caccioppoli--Cimmino--Weyl lemma]\cite[Theorem~12.17]{Gar19}\label{thm-Weyl}
Let $\Omega \subset \mathbb{R}^n$ be a nonempty open set. If $u \in L^1_{2s}(\mathbb{R}^n)$ satisfies $(-\Delta)^su=0$ in $\mathscr{D}'(\Omega)$, then $u$ has a representative that belongs to $C^\infty(\Omega)$.
\end{theorem}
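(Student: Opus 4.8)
\medskip

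The plan is to prove this hypoellipticity statement by mollification combined with interior regularity estimates for the fractional Laplacian. Since $u \in L^1_{2s}(\mathbb{R}^n)$, the functional $\psi \mapsto \int_{\mathbb{R}^n} u\psi$ is continuous on $\mathscr{S}_s(\mathbb{R}^n)$, so $u \in \mathscr{S}_s'(\mathbb{R}^n)$, the tempered distribution $(-\Delta)^s u \in \mathscr{S}'(\mathbb{R}^n)$ is well defined, and the hypothesis $(-\Delta)^s u = 0$ in $\mathscr{D}'(\Omega)$ means $\langle u, (-\Delta)^s\varphi\rangle = 0$ for all $\varphi \in C^\infty_c(\Omega)$. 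It then suffices to produce, for every $\Omega' \Subset \Omega$, a function in $C^\infty(\Omega')$ that agrees with $u$ almost everywhere on $\Omega'$.

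First I would fix a standard mollifier $\rho_\varepsilon \in C^\infty_c(B_\varepsilon)$ and set $u_\varepsilon \coloneqq u * \rho_\varepsilon$, so that $u_\varepsilon \in C^\infty(\mathbb{R}^n) \cap L^1_{2s}(\mathbb{R}^n)$, $u_\varepsilon \to u$ in $L^1_{\mathrm{loc}}(\mathbb{R}^n)$ as $\varepsilon \to 0$, and, for each fixed $x_0$ and $r$, the tail $\mathrm{Tail}(u_\varepsilon; x_0, r)$ stays bounded as $\varepsilon \to 0$. The key point is that $u_\varepsilon$ is a classical solution of $(-\Delta)^s u_\varepsilon = 0$ in the open set $\Omega_\varepsilon \coloneqq \{x \in \mathbb{R}^n : \overline{B}_\varepsilon(x) \subset \Omega\}$: indeed, for $\varphi \in C^\infty_c(\Omega_\varepsilon)$, writing $\widetilde\rho_\varepsilon(x) \coloneqq \rho_\varepsilon(-x)$ and using that $(-\Delta)^s$ commutes with convolution (it is the Fourier multiplier $|\xi|^{2s}$),
\begin{equation*}
\int_{\mathbb{R}^n} \big((-\Delta)^s u_\varepsilon\big)\varphi = \langle u_\varepsilon, (-\Delta)^s\varphi \rangle = \langle u, \widetilde\rho_\varepsilon * (-\Delta)^s\varphi \rangle = \langle u, (-\Delta)^s(\widetilde\rho_\varepsilon * \varphi) \rangle = 0,
\end{equation*}
the last equality holding because $\widetilde\rho_\varepsilon * \varphi \in C^\infty_c(\Omega)$; since $u_\varepsilon \in C^\infty \cap L^1_{2s}$ makes $(-\Delta)^s u_\varepsilon$ pointwise well defined and continuous, it vanishes identically on $\Omega_\varepsilon$.

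Next I would run interior regularity on $u_\varepsilon$ with constants independent of $\varepsilon$. Being smooth and $(-\Delta)^s$-harmonic in $\Omega_\varepsilon$, $u_\varepsilon$ is in particular a weak solution there, so for a ball $B_\rho(x_0)$ well inside $\Omega_\varepsilon$ the local boundedness estimate of the nonlocal De Giorgi--Nash--Moser theory (\cite{DCKP16}, in $L^1$-averaged form) controls $\|u_\varepsilon\|_{L^\infty}$ on a concentric subball by an $L^1$-average and a tail of $u_\varepsilon$; moreover, the Poisson representation $u_\varepsilon(x) = \int_{\mathbb{R}^n \setminus B_\rho(x_0)} P_\rho(x_0; x, y)\,u_\varepsilon(y)\,\mathrm{d}y$ for $x \in B_\rho(x_0)$ --- valid because $u_\varepsilon$ is continuous on $\overline{B}_\rho(x_0)$, lies in $L^1_{2s}$, and is $(-\Delta)^s$-harmonic in $B_\rho(x_0)$, hence coincides by the maximum principle with the Poisson integral of its own exterior values --- together with the fact that the fractional Poisson kernel of a ball is real-analytic in $x$ in the interior, with $x$-derivatives controlled by the exterior weight, gives, upon differentiating under the integral sign, a bound on $\|u_\varepsilon\|_{C^k(B_{\rho/2}(x_0))}$ in terms of the $L^\infty$-norm and a tail of $u_\varepsilon$. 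Covering a given $\Omega'' \Subset \Omega$ by finitely many such balls, and recalling the $\varepsilon$-uniform control of the $L^1$ norm of $u_\varepsilon$ on compact sets and of the tails of $u_\varepsilon$ from the previous step, shows that $\|u_\varepsilon\|_{C^k(\Omega'')}$ is bounded uniformly in small $\varepsilon$, for every $k \in \mathbb{N}$. By the Arzel\`a--Ascoli theorem and a diagonal argument along an exhaustion of $\Omega$, a subsequence of $(u_\varepsilon)$ then converges in $C^\infty_{\mathrm{loc}}(\Omega)$ to some $w \in C^\infty(\Omega)$; since $u_\varepsilon \to u$ in $L^1_{\mathrm{loc}}(\mathbb{R}^n)$, we conclude $w = u$ almost everywhere in $\Omega$, which proves the statement.

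I expect the main obstacle to lie in the third step, that is, in proving that an $L^1_{2s}$-function which is $(-\Delta)^s$-harmonic in a ball is smooth in the interior with estimates that survive the mollification limit. The two delicate ingredients are the $L^1$-averaged local boundedness bound (derived from the standard $L^2$ version by a Kruzhkov-type interpolation, which is needed because $u \notin L^2_{\mathrm{loc}}$ a priori) and the control of the $x$-derivatives of the fractional Poisson kernel of a ball against the tail weight. Alternatively, this whole step can be bypassed via the Caffarelli--Silvestre extension $U$: the homogeneous Neumann condition on $\Omega \times \{0\}$ encoded by $(-\Delta)^s u = 0$ permits an even reflection of $U$ across the boundary, and interior regularity for the resulting degenerate elliptic equation gives smoothness of $u = U(\cdot, 0)$ on $\Omega$. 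The mollification step itself is routine once one records that convolution with $C^\infty_c$ functions commutes with $(-\Delta)^s$ at the level of $\mathscr{S}_s'(\mathbb{R}^n)$, which is immediate from the Fourier transform.
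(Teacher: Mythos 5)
The paper offers no proof of this statement: it is imported as a black box from Garofalo \cite[Theorem~12.17]{Gar19}, so there is no internal argument to compare yours against. Your proposal is essentially the standard proof of this hypoellipticity result, and I find no genuine gap in it. The mollification step is correct: the duality $\langle u\ast\rho_\varepsilon,(-\Delta)^s\varphi\rangle=\langle u,(-\Delta)^s(\widetilde\rho_\varepsilon\ast\varphi)\rangle$ is justified by Fubini thanks to the $(1+|x|)^{-n-2s}$ decay of $(-\Delta)^s\varphi$, and passing from the resulting distributional identity to the pointwise one for the smooth function $u_\varepsilon$ is the standard integration-by-parts lemma for functions in $C^2_{\mathrm{loc}}\cap L^1_{2s}$; one also checks easily that $\mathrm{Tail}(u_\varepsilon;x_0,r)\lesssim\mathrm{Tail}(u;x_0,r/2)$ uniformly for $\varepsilon<r/4$. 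The two ingredients you flag as delicate are both true and standard: the $L^1$-averaged local boundedness follows from the $L^2$ version in \cite{DCKP16} by the usual absorption iteration, and this is indeed indispensable since only $\|u_\varepsilon\|_{L^1(K)}$ and the tails are controlled uniformly in $\varepsilon$; and the $x$-derivatives of the fractional Poisson kernel of $B_\rho(x_0)$ are bounded, for $x$ in a compact subset, by a constant times $\min\{(|y-x_0|-\rho)^{-s},\,|y-x_0|^{-n-2s}\}$, which is exactly what is needed to differentiate under the integral sign. The one point you should make explicit when writing this up is the ordering of the two estimates: admissibility of the exterior datum in the Poisson representation requires $\int_{B_{2\rho}(x_0)\setminus B_\rho(x_0)}(|y-x_0|-\rho)^{-s}|u_\varepsilon(y)|\,\mathrm{d}y$ to be finite and uniformly bounded, so the uniform $L^\infty$ bound must first be established on a neighborhood of $\overline{B}_\rho(x_0)$, not merely on the ball where the representation is used. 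With that caveat the argument closes; your alternative route via the Caffarelli--Silvestre extension would also work. For what it is worth, the proof in the cited source proceeds in the same spirit (mollification plus a representation/mean-value formula for smooth $(-\Delta)^s$-harmonic functions), so you have reconstructed rather than replaced it.
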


\begin{theorem}[Liouville theorem for the fractional Laplacian]\cite[Lemma~3.2]{BKN02}\label{thm-Liouville-s}
If a nonnegative function $u \in L^1_{2s}(\mathbb{R}^n)$ satisfies $(-\Delta)^su=0$ in $\mathscr{D}'(\mathbb{R}^n)$, then $u$ must be constant.
\end{theorem}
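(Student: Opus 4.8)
The plan is to upgrade the distributional solution to a classical one, then to exploit global nonnegativity to show that $u$ is bounded, and finally to run a scaling argument based on interior estimates.

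First, since $u \in L^1_{2s}(\mathbb{R}^n) \hookrightarrow \mathscr{S}_s'(\mathbb{R}^n)$ satisfies $(-\Delta)^s u = 0$ in $\mathscr{D}'(\mathbb{R}^n)$, the nonlocal Caccioppoli--Cimmino--Weyl Lemma (Theorem~\ref{thm-Weyl}) furnishes a representative, still denoted $u$, that is smooth on $\mathbb{R}^n$ and solves $(-\Delta)^s u = 0$ pointwise; in particular $u$ is a weak (indeed classical) solution of $(-\Delta)^s u = 0$ in every ball in the sense of Definition~\ref{def-harmonic}. The crucial point is a global $L^\infty$ bound, for which I would invoke the nonlocal Harnack inequality for nonnegative weak solutions (e.g.\ \cite{DCKP14}): if $u \ge 0$ in $B_{2R}$, then $\sup_{B_R} u \le C(n,s)\bigl(\inf_{B_R} u + \mathrm{Tail}(u_-;0,2R)\bigr)$. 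Because $u \ge 0$ on all of $\mathbb{R}^n$, the negative part $u_-$ vanishes identically, so the tail term disappears and $\sup_{B_R} u \le C(n,s)\inf_{B_R} u \le C(n,s)\,u(0)$ for every $R>0$. Letting $R \to \infty$ yields $M := \|u\|_{L^\infty(\mathbb{R}^n)} \le C(n,s)\,u(0) < \infty$.

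Next, I would use the interior Hölder estimate from the nonlocal De Giorgi--Nash--Moser theory (e.g.\ \cite{Kas09,DCKP16}): there exist $\alpha = \alpha(n,s) \in (0,1)$ and $C = C(n,s)$ with $[u]_{C^\alpha(B_{1/2})} \le C\bigl(\|u\|_{L^\infty(B_1)} + \mathrm{Tail}(u;0,1)\bigr)$, so that, after rescaling by $u_R(x) := u(Rx)$, $R^\alpha [u]_{C^\alpha(B_{R/2})} \le C\bigl(\|u\|_{L^\infty(B_R)} + \mathrm{Tail}(u;0,R)\bigr)$ for every $R>0$. Since $\|u\|_{L^\infty(B_R)} \le M$ and, using boundedness, $\mathrm{Tail}(u;0,R) = R^{2s}\int_{\mathbb{R}^n \setminus B_R} |y|^{-n-2s} u(y)\,\mathrm{d}y \le \tfrac{|\mathbb{S}^{n-1}|}{2s}\,M$ independently of $R$, we obtain $[u]_{C^\alpha(B_{R/2})} \le C'(n,s)\,M\,R^{-\alpha} \to 0$ as $R \to \infty$. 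Hence $|u(x)-u(y)| \le [u]_{C^\alpha(B_{R/2})}\,|x-y|^\alpha \to 0$ for every fixed pair $x,y \in \mathbb{R}^n$, so $u$ is constant.

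The step I expect to be delicate is the global boundedness: one must recognize that global nonnegativity is precisely what is needed to annihilate the tail contribution in the nonlocal Harnack inequality---a genuinely nonlocal feature, since a purely local Harnack estimate would only control $u$ ball by ball up to a potentially growing tail correction. Once $u$ is known to be globally bounded, the remainder is routine. As an alternative to the final paragraph, one may instead differentiate the equation (legitimate since $u \in C^\infty(\mathbb{R}^n)$ and $(-\Delta)^s$ commutes with translations) and apply the interior gradient estimate $\|\nabla u\|_{L^\infty(B_{R/2})} \le C R^{-1}\bigl(\|u\|_{L^\infty(B_R)} + \mathrm{Tail}(u;0,R)\bigr) \le C''(n,s)\,M\,R^{-1} \to 0$, which forces $\nabla u \equiv 0$.
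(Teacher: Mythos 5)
Your argument is correct. Note, however, that the paper does not prove this statement at all: it is quoted from Bogdan--Kulczycki--Nowak \cite{BKN02}, whose Lemma~3.2 is proved by a quite different, explicit route (the Poisson kernel representation $u(x)=\int_{|y|>r}P_r(x,y)u(y)\,\mathrm{d}y$ for $\alpha$-harmonic functions, together with the asymptotics $P_r(x,y)/P_r(0,y)\to 1$ as $r\to\infty$). What you propose is instead the De Giorgi--Nash--Moser route that the paper itself follows for the general operator $\mathcal{L}$ in Theorem~\ref{thm-Liouv-weak}: establish global boundedness, then let $R\to\infty$ in the scale-invariant interior H\"older estimate $[u]_{C^\alpha(B_R)}\leq CR^{-\alpha}\|u\|_{L^\infty(\mathbb{R}^n)}$ (plus a uniformly bounded tail, as you correctly compute). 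Your boundedness step is in fact cleaner than the paper's: since $u\geq 0$ on all of $\mathbb{R}^n$, the tail term $\mathrm{Tail}(u_-;\cdot,\cdot)$ in the nonlocal Harnack inequality vanishes and $\sup_{B_R}u\leq C\,u(0)$ uniformly in $R$; the paper instead routes through the comparison argument of Lemma~\ref{lem-bdd}, which it needs anyway because elsewhere it must handle solutions that are merely bounded from one side rather than globally nonnegative. Two small points worth making explicit if you write this up: (i) after invoking Theorem~\ref{thm-Weyl} you should justify that the smooth representative is a weak solution in the sense of Definition~\ref{def-harmonic} (smoothness gives $u\in W^{s,2}_{\mathrm{loc}}$, and $u\in L^1_{2s}$ makes $\mathcal{E}(u,\varphi)$ absolutely convergent, so the distributional identity transfers to the bilinear form); (ii) the alternative ending via the gradient estimate is legitimate here only because $(-\Delta)^s$ is translation invariant---it would not survive the passage to the measurable-kernel operators $\mathcal{L}$, whereas the H\"older-seminorm argument does.
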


See also \cite{AdTEJ20,CDL15,Fal16,FW16,GK25} for more general Liouville theorems. We now turn to the proof of Theorem~\ref{thm-sing-Liouv-s}.

\begin{proof}[Proof of Theorem~\ref{thm-sing-Liouv-s}]
We may assume that $u$ is bounded from below in a neighborhood of the origin, say in $B_r=B_r(0)$ for some $r>0$. By Theorem~\ref{thm-weak-Bocher}, there exists a constant $\bar{a} \geq 0$ such that $(-\Delta)^su=\bar{a}\delta_0$ in $B_r$ in the sense that
\begin{equation}\label{eq-fL-weak}
    \int_{\mathbb{R}^n} \int_{\mathbb{R}^n} \frac{(u(x)-u(y))(\varphi(x)-\varphi(y))}{|x-y|^{n+2s}} \,\mathrm{d}y \,\mathrm{d}x = \bar{a} \varphi(0)
\end{equation}
for any $\varphi \in C^\infty_c(B_r)$. Since $u$ is $(-\Delta)^s$-harmonic in $\mathbb{R}^n \setminus \{0\}$, \eqref{eq-fL-weak} holds for any $\varphi \in C^\infty_c(\mathbb{R}^n)$. In particular, this implies that $(-\Delta)^su=\bar{a}\delta_0$ in $\mathscr{D}'(\mathbb{R}^n)$ since the left-hand side of \eqref{eq-fL-weak} is equal to
\begin{equation*}
    \int_{\mathbb{R}^n} u(x)(-\Delta)^s \varphi(x) \,\mathrm{d}x.
\end{equation*}
Notice that this quantity is finite for any $\varphi \in C^\infty_c(\mathbb{R}^n)$ since
\begin{equation*}
    \left| \int_{\mathbb{R}^n} u(x)(-\Delta)^s \varphi(x) \,\mathrm{d}x \right| \leq C \int_{\mathbb{R}^n} \frac{|u(x)|}{(1+|x|)^{n+2s}} \,\mathrm{d}x < \infty
\end{equation*}
for some $C=C(n, s, \varphi)>0$, where we used \cite[Proposition~2.9]{Gar19} and the fact that $(-\Delta)^s\varphi \in C^\infty(\mathbb{R}^n)$.

Let $\Phi_s$ be given by \eqref{eq-FS}, which is the fundamental solution for $(-\Delta)^s$, i.e.\ $(-\Delta)^s\Phi_s=\delta_0$ in $\mathscr{D}'(\mathbb{R}^n)$. Then
\begin{equation*}
    (-\Delta)^s(u-\bar{a}\Phi_s)=0\quad\text{in }\mathscr{D}'(\mathbb{R}^n).
\end{equation*}
Thus, Theorem~\ref{thm-Weyl} shows that $v\coloneqq u-\bar{a}\Phi_s \in C^\infty(\mathbb{R}^n)$, after redefinition on a set of measure zero if necessary. In particular, $v$ is $(-\Delta)^s$-harmonic in $\mathbb{R}^n$.

Since $u$ is bounded from one side in a neighborhood of infinity, say in $B_R^c$ for some $R>0$, and $\Phi_s$ is bounded in $B_R^c$, $v$ is also bounded from one side in $B_R^c$. Moreover, since $v \in C^\infty(\mathbb{R}^n)$, $v$ is bounded from one side in $\mathbb{R}^n$. Thus, it follows from Theorem~\ref{thm-Liouville-s} that $v$ is constant. Therefore,
\begin{equation*}
    u(x)=a|x|^{2s-n}+b,
\end{equation*}
where $a=\frac{\Gamma(\frac{n}{2}-s)}{4^{s}\pi^{n/2}\Gamma(s)}\bar{a}$ and $b=v$ are constants.
\end{proof}

\begin{remark}
    If $u$ is assumed to be bounded from above or below in $\mathbb{R}^n$ in Theorem~\ref{thm-sing-Liouv-s}, then B\^ocher type results for \emph{globally nonnegative} functions in the literature (see, for instance, Li--Wu--Xu~\cite[Theorem~4]{LWX18}, Li--Liu--Wu--Xu~\cite[Theorem~1.3]{LLWX20}, and Klimsiak~\cite{Kli25}) can be used instead of Theorem~\ref{thm-weak-Bocher}.
\end{remark}

For the fractional Laplacian, a classification of $(-\Delta)^s$-harmonic functions in exterior domains can be deduced from the B\^ocher Theorem~\ref{thm-Bocher} by applying the Kelvin transform. Theorem~\ref{thm-Bocher} for general operators $\mathcal{L}$ will be proved in Section~\ref{sec-Bocher}.

\begin{theorem}
Let $u$ be $(-\Delta)^s$-harmonic in a neighborhood of infinity. Then either
\begin{equation}\label{eq-limsupinf}
    \limsup_{x \to \infty} u(x) = \infty \quad\text{and}\quad \liminf_{x \to \infty} u(x) = -\infty,
\end{equation}
or
\begin{equation*}
    \lim_{x \to \infty} u(x) = c
\end{equation*}
for some constant $c \in \mathbb{R}$.
\end{theorem}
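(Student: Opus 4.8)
The plan is to reduce the exterior statement to the B\^ocher Theorem~\ref{thm-Bocher} via the Kelvin transform, which conjugates $(-\Delta)^s$ on an exterior domain to $(-\Delta)^s$ on a punctured neighborhood of the origin. Concretely, suppose $u$ is $(-\Delta)^s$-harmonic in $\mathbb{R}^n \setminus \overline{B}_\rho$ for some $\rho>0$. Define the Kelvin transform $u^*(x) \coloneqq |x|^{2s-n} u(x/|x|^2)$, which maps $\mathbb{R}^n \setminus \overline{B}_\rho$ to $B_{1/\rho} \setminus \{0\}$ and, by the conformal covariance of the fractional Laplacian, satisfies $(-\Delta)^s u^* = 0$ in $B_{1/\rho} \setminus \{0\}$ (so $u^*$ is $(-\Delta)^s$-harmonic there). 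The point at infinity for $u$ corresponds to the origin for $u^*$, and $\lim_{x \to \infty} u(x) = c$ is equivalent to $u^*(x) = c|x|^{2s-n} + o(|x|^{2s-n})$ as $x \to 0$.

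First I would establish the transformation rule: that $u \mapsto u^*$ preserves $(-\Delta)^s$-harmonicity, mapping solutions in the exterior of $\overline{B}_\rho$ to solutions in the punctured ball $B_{1/\rho}\setminus\{0\}$. This is classical for the fractional Laplacian and can be cited or verified directly on the weak formulation. Second, I would dichotomize on whether $u^*$ is bounded from above or below near the origin. If $u^*$ is \emph{not} bounded from either side near $0$, then (since $u^*$ is continuous away from $0$) one gets $\limsup_{x\to 0} u^*(x) = +\infty$ and $\liminf_{x\to 0} u^*(x) = -\infty$; untransforming, and using that $|x|^{2s-n}$ is bounded and bounded away from zero near the point at infinity's image, this yields \eqref{eq-limsupinf}. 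Third, if $u^*$ \emph{is} bounded from one side near $0$, apply Theorem~\ref{thm-Bocher} (after adding a constant to make it bounded below) on a small ball $\Omega = B_{1/\rho}$ to write $u^* = a\Phi_s + v$ with $a \geq 0$ and $v$ being $(-\Delta)^s$-harmonic (hence continuous, indeed smooth by Theorem~\ref{thm-Weyl}) in $B_{1/\rho}$; in particular $v$ has a finite limit $v(0)$ at the origin. Translating back through the Kelvin transform, $u(x) = |x|^{2s-n} u^*(x/|x|^2) = a\Phi_s(x/|x|^2)\,|x|^{2s-n} + |x|^{2s-n} v(x/|x|^2)$; since $\Phi_s(y) = c_{n,s}|y|^{2s-n}$, the first term equals $a c_{n,s}$, a constant, while the second term tends to $v(0) \cdot 0 = 0$ as $x \to \infty$ because $|x|^{2s-n} \to 0$ (as $n>2s$, using $n\geq 2$; the borderline $n=2s$ is excluded as noted in the paper) and $v(x/|x|^2) \to v(0)$. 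Hence $\lim_{x\to\infty} u(x) = a c_{n,s} =: c$.

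The main obstacle I anticipate is making the Kelvin transform argument fully rigorous at the level of weak solutions with measurable/singular data — in particular, verifying that $u^*$ genuinely lies in the right function spaces ($L^1_{2s}$ near the origin, $W^{s,2}_{\mathrm{loc}}$ of the punctured ball) so that Theorem~\ref{thm-Bocher} applies, and that the transformed equation holds in the weak sense of Definition~\ref{def-harmonic} rather than merely formally. One must check that the tail integrability of $u$ near infinity transforms into tail integrability of $u^*$ near the origin and vice versa; this is where the hypothesis $n \geq 2$ (equivalently $n>2s$) and the explicit form of $\Phi_s$ are used. A secondary, more bookkeeping-level point is confirming that one-sided boundedness of $u$ near infinity transfers to one-sided boundedness of $u^*$ near $0$ (immediate, since the Kelvin factor $|x|^{2s-n}$ is positive and locally bounded above and below away from $0$ and $\infty$), which is what lets us invoke Theorem~\ref{thm-Bocher} in the first place.
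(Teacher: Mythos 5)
Your overall route is the same as the paper's: reduce to the punctured-ball B\^ocher Theorem~\ref{thm-Bocher} via the Kelvin transform and then read off the limit from $u=a c_{n,s}+|x|^{2s-n}v(x/|x|^2)$. The computation in your third step is correct. The gap is in how you set up the dichotomy. You dichotomize on whether $u^*$ is bounded from one side near the origin, and you assert that one-sided boundedness of $u$ near infinity transfers to one-sided boundedness of $u^*$ near $0$ because the Kelvin factor is ``locally bounded above and below away from $0$ and $\infty$.'' But the relevant regime is exactly at $0$ and at $\infty$, where the factor is not bounded away from $0$ and $\infty$: since $u^*(x)=|x|^{2s-n}u(x/|x|^2)$ and $|x|^{2s-n}\to\infty$ as $x\to 0$, even a bounded sign-changing $u$ (say oscillating between $+1$ and $-1$ along two sequences going to infinity) produces a $u^*$ comparable to $+|x|^{2s-n}$ along one sequence and $-|x|^{2s-n}$ along another, hence unbounded from both sides near $0$. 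So the case ``$u$ bounded from one side at infinity but $u^*$ unbounded from both sides at $0$'' is not excluded a priori, and your argument never reaches Theorem~\ref{thm-Bocher} in that case. The complementary branch is also unjustified for the mirror reason: from $\limsup_{x\to0}u^*=+\infty$ and $\liminf_{x\to0}u^*=-\infty$ you cannot conclude \eqref{eq-limsupinf}, because the inverse relation $u(y)=|y|^{2s-n}u^*(y/|y|^2)$ carries a factor tending to $0$, which can absorb the blow-up of $u^*$.

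The fix is the one the paper uses: dichotomize on $u$ itself. If \eqref{eq-limsupinf} fails, then $u$ is bounded from one side near infinity, say $u+a>0$ in $B_R^c$ for some constant $a$. Now Kelvin-transform $u+a$ rather than $u$: its transform is $\bar u(\bar x)+a|\bar x|^{2s-n}$, which is \emph{positive} in $B_{1/R}\setminus\{0\}$ (the Kelvin factor is positive) and is still $(-\Delta)^s$-harmonic there, since $a|\bar x|^{2s-n}$ is a multiple of the fundamental solution. Theorem~\ref{thm-Bocher} applies to this function, giving $\bar u+a|\bar x|^{2s-n}=b|\bar x|^{2s-n}+v(\bar x)$ with $v$ harmonic across the origin, and undoing the transform yields $u(x)=b-a+|\bar x|^{n-2s}v(\bar x)\to b-a$. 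With this adjustment your argument closes; the remaining points you flag (conformal covariance of $(-\Delta)^s$ under Kelvin transform, membership of the transformed function in $W^{s,2}_{\mathrm{loc}}\cap L^1_{2s}$) are genuine verifications but are standard for the fractional Laplacian, which is why the paper proves this exterior classification only for $(-\Delta)^s$ and not for general $\mathcal{L}$.
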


\begin{proof}
We define the Kelvin transform $\bar{u}$ of $u$ by
\begin{equation*}
    \bar{u}(\bar{x}) = \frac{u(x)}{|\bar{x}|^{n-2s}} \quad \text{for } \bar{x}=\frac{x}{|x|^2}.
\end{equation*}
Then $\bar{u}$ is $(-\Delta)^s$-harmonic in $\Omega \setminus \{0\}$ for some neighborhood $\Omega$ of the origin. Suppose that \eqref{eq-limsupinf} does not hold. Then $u$ is bounded from above or below near infinity. We may assume without loss of generality that $u$ is bounded from below in $B_R^c$ for some ball $B_R=B_R(0)$ with a radius $R>0$ sufficiently large so that $B_{1/R} \subset \Omega$. Let $a \in \mathbb{R}$ be such that $u+a$ is positive in $B_R^c$. Then the function
\begin{equation*}
    \bar{x} \mapsto \bar{u}(\bar{x}) + \frac{a}{|\bar{x}|^{n-2s}} = \frac{u(x)+a}{|\bar{x}|^{n-2s}}
\end{equation*}
is $(-\Delta)^s$-harmonic in $B_{1/R} \setminus \{0\}$ and positive in $B_{1/R}$. Thus, Theorem~\ref{thm-Bocher} shows that
\begin{equation*}
    \frac{u(x)+a}{|\bar{x}|^{n-2s}} = \frac{b}{|\bar{x}|^{n-2s}} + v(\bar{x})
\end{equation*}
for some constant $b$ and some function $v$ that is $(-\Delta)^s$-harmonic in $B_{1/R}$. We thus get
\begin{equation*}
    u(x) = b-a + |\bar{x}|^{n-2s} v(\bar{x}),
\end{equation*}
which yields
\begin{equation*}
    \lim_{x \to \infty} u(x) = \lim_{\bar{x} \to 0} (b-a + |\bar{x}|^{n-2s} v(\bar{x})) = b-a. \qedhere
\end{equation*}
\end{proof}

\section{Weak B\^ocher type theorem}\label{sec-weak-Bocher}

In this section, we prove Theorem~\ref{thm-weak-Bocher}, which we refer to as a weak B\^ocher type theorem, since it can be viewed as a weaker version of Theorem~\ref{thm-Bocher}. Despite its weaker form, it plays a crucial role in the construction of the fundamental solution in Section~\ref{sec-fundamentalsol}.

The main difficulty in proving Theorem~\ref{thm-weak-Bocher} lies in the fact that the regularity assumption $W^{s, 2}_{\mathrm{loc}}(\Omega \setminus \{0\}) \cap L^{1}_{2s}(\mathbb{R}^n)$ for $\mathcal{L}$-harmonic functions is not sufficient for the quantity $\mathcal{E}(u, \varphi)$ to be well defined. Nevertheless, we can show that singular harmonic functions in $\Omega \setminus \{0\}$ are $\mathcal{L}$-superharmonic in $\Omega$, and thus enjoy some local regularity in $\Omega$ from Theorem~\ref{thm-integrability}. Establishing this is our first step.

\begin{lemma}\label{lem-superharmonic}
Let $\Omega \subset \mathbb{R}^n$ be an open set containing the origin. Let $u$ be $\mathcal{L}$-harmonic in $\Omega \setminus \{0\}$ and bounded from below in $\Omega$. Then, after a redefinition at the origin if necessary, $u$ is $\mathcal{L}$-superharmonic in $\Omega$.
\end{lemma}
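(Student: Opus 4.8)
### Proof proposal for Lemma~\ref{lem-superharmonic}

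The plan is to verify the four defining properties of an $\mathcal{L}$-superharmonic function in Definition~\ref{def-superharmonic} for a suitable redefinition of $u$ at the origin. Since $u$ is $\mathcal{L}$-harmonic in $\Omega \setminus \{0\}$, it already lies in $W^{s,2}_{\mathrm{loc}}(\Omega \setminus \{0\}) \cap L^1_{2s}(\mathbb{R}^n)$ and is continuous there; in particular $u_- \in L^1_{2s}(\mathbb{R}^n)$, giving property (iv), and property (i) will follow once we set $u(0) \coloneqq \essliminf_{x \to 0} u(x)$ (or rather $\liminf_{x\to 0} u(x)$), which is finite from below by hypothesis. The value $+\infty$ is permitted at the single point $0$, so (i) holds; lower semicontinuity at $0$ with this choice is essentially automatic, while lower semicontinuity at every other point of $\Omega$ follows from continuity of $u$ on $\Omega\setminus\{0\}$. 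Thus the real content is property (iii), the comparison property against bounded continuous weak solutions on subdomains $G \Subset \Omega$.

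For property (iii), fix an open set $G \Subset \Omega$ and a function $v \in C(\overline G)$ that is a weak solution of $\mathcal{L}v = 0$ in $G$ with $v_+ \in L^\infty(\mathbb{R}^n)$ and $u \ge v$ on $G^c$; we must show $u \ge v$ in $G$. If $0 \notin G$, then $u$ is $\mathcal{L}$-harmonic (hence $\mathcal{L}$-superharmonic) in $G$ already, and the conclusion follows from the comparison property for genuinely superharmonic functions. So assume $0 \in G$. The idea is to compare $v$ with $u$ on the punctured region $G \setminus \overline{B}_\varepsilon$ for small $\varepsilon > 0$. First, since $u$ is bounded below in $\Omega$, say $u \ge -M$, and $v$ is bounded above, replacing $v$ by $v - c$ for suitable $c$ I can arrange that $v$ is strictly negative on a fixed neighborhood, and then argue by letting the correction vanish; alternatively, I compare $v$ against $u + \eta\Phi_\varepsilon$-type barriers. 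The cleaner route: apply the standard comparison principle for the $\mathcal{L}$-superharmonic function $u$ (which holds on $G \setminus \overline{B}_\varepsilon$, where $u$ is $\mathcal{L}$-harmonic) against $v$ restricted there. On the complement of $G \setminus \overline{B}_\varepsilon$ we have $u \ge v$ on $G^c$ by hypothesis, and on $\overline{B}_\varepsilon$ we need $u \ge v$; this last inclusion is where the lower bound on $u$ and the boundedness of $v$ do not suffice by themselves, since near $0$ we only know $u$ is bounded below while $v$ is bounded. The fix is to exploit that $v$ is continuous on $\overline G$, hence bounded on $\overline B_\varepsilon$, and to absorb the possible deficit using a small multiple of an explicitly constructed $\mathcal{L}$-superharmonic barrier that blows up at $0$ — for instance the fundamental-solution-type comparison function, or simply noting that $\min\{u, k\}$ is a weak supersolution in $G \setminus \overline{B}_\varepsilon$ by Lemma~\ref{lem-truncation} and letting $k \to \infty$. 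Carrying out the comparison on $G \setminus \overline B_\varepsilon$ and then letting $\varepsilon \to 0$ yields $u \ge v$ in $G \setminus \{0\}$, and since the single point $0$ has measure zero and $u(0)$ was defined as a liminf, also at $0$.

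More precisely, for the key estimate I would proceed as follows. Fix $\varepsilon>0$ small with $\overline B_\varepsilon \subset G$. On the annular domain $G_\varepsilon \coloneqq G \setminus \overline B_\varepsilon$ the function $u$ is continuous and $\mathcal{L}$-harmonic, and $v \in C(\overline{G_\varepsilon})$ is a weak subsolution with $v_+ \in L^\infty(\mathbb{R}^n)$. We have $u \ge v$ on $G^c$, so the only place where $u \ge v$ might fail on $G_\varepsilon^c$ is the compact set $\overline B_\varepsilon$, on which $v$ is bounded, say $v \le L_\varepsilon$ there, while $u \ge -M$. Consider $w \coloneqq v - L_\varepsilon - M$ on $G_\varepsilon$; this is still a weak subsolution of $\mathcal{L}w=0$ in $G_\varepsilon$ (subtracting a constant does not change the equation, since $\mathcal{E}(\text{const},\varphi)=0$), and now $w \le v - L_\varepsilon - M \le u$ on all of $G_\varepsilon^c$: indeed on $\overline B_\varepsilon$, $w \le -M \le u$, and on $G^c$, $w \le v \le u$. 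By the comparison principle for the $\mathcal{L}$-harmonic (hence $\mathcal{L}$-superharmonic) function $u$ against the subsolution $w$ on $G_\varepsilon$ — i.e. property (iii) applied in the region $G_\varepsilon$ where $u$ is already superharmonic — we obtain $u \ge w = v - L_\varepsilon - M$ in $G_\varepsilon$. This bound degrades as $\varepsilon \to 0$ because $L_\varepsilon$ may grow; so instead I localize: for a point $x_0 \in G \setminus \{0\}$ choose $\varepsilon < |x_0|/2$ and apply the comparison on a small ball $B(x_0, \rho) \subset G_\varepsilon$ with $\rho$ independent of $\varepsilon$, using that both $u$ and $v$ are continuous and finite near $x_0$; here the hypothesis $u \ge v$ on $G^c$ together with $u \ge -M$ and the removable-singularity-type control of $u$ near $0$ give a clean $\varepsilon$-free comparison. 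Alternatively — and this is probably the path the authors take — one invokes that $\min\{u,k\}$ is a weak supersolution in $G$ minus a neighborhood of $0$, is continuous, dominates $v$ on the complement for $k$ large (since $v$ is bounded above and $u \ge -M$), and converges up to $u$; combined with exhausting $G$ by $G_\varepsilon$ this delivers $u \ge v$ in $G \setminus \{0\}$, hence everywhere.

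The main obstacle I anticipate is precisely the behavior near the origin in the comparison step: one must rule out that $v$ "sneaks above" $u$ through the puncture at $0$, where we have only a one-sided bound on $u$. The honest way to handle this is a small barrier argument — adding $\eta \Psi$ for a fixed $\mathcal{L}$-superharmonic $\Psi$ with $\Psi(x) \to +\infty$ as $x \to 0$ (such $\Psi$ exists, e.g. comparison with a subsolution constructed from $|x|^{2s-n}$, or more simply invoking local barriers from the De Giorgi–Nash–Moser theory for $\mathcal{L}$) to $u$, running the comparison on $G$ with $u + \eta\Psi$ in place of $u$ (now genuinely superharmonic on all of $G$ and $\ge v$ on $G^c$, with the singularity at $0$ making the comparison on a neighborhood of $0$ trivial), obtaining $u + \eta\Psi \ge v$ in $G$, and finally letting $\eta \to 0^+$ pointwise in $G \setminus \{0\}$. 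Everything else — properties (i), (ii), (iv), and the measure-zero adjustment at the origin — is routine once this barrier mechanism is in place.
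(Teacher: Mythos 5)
Your reduction of the lemma to property (iii) of Definition~\ref{def-superharmonic} is reasonable, and properties (i), (ii), (iv) are indeed routine. But the treatment of (iii) is where the lemma's actual content lies, and there you offer three hedged sketches without completing any of them. The route closest to the paper is your remark that $\min\{u,k\}$ is a weak supersolution ``in $G$ minus a neighborhood of $0$'' by Lemma~\ref{lem-truncation} and that one should let $k\to\infty$. The missing step --- and it is the heart of the matter --- is that $\min\{u,k\}$ is in fact a weak supersolution \emph{across the origin}, i.e.\ in all of $\Omega$. This is a removable-singularity statement for bounded-above supersolutions, and it is not free: the paper isolates it as Lemma~\ref{lem-supersolution} and proves it via a Caccioppoli-type estimate from \cite{KL24} showing first that the truncation belongs to $W^{s,2}$ near the origin, so that the bilinear form can be tested there at all. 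Without this, comparing on $G\setminus\overline{B}_\varepsilon$ and sending $\varepsilon\to 0$ does not close: your first sketch loses a fixed constant $L+M$ that never disappears, and your localization at $x_0$ is incompatible with the nonlocal comparison principle, which requires domination of $v$ on the entire complement of the comparison domain, not just near $x_0$. Once the truncations are known to be supersolutions in all of $\Omega$, the paper concludes cleanly by taking lsc-regularizations and invoking the result of \cite{KKP17} that an increasing limit of (regularized) supersolutions is $\mathcal{L}$-superharmonic --- a step you would also need in some form to pass from the truncations back to $u$.

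Your fallback barrier argument has a circularity problem in the context of this paper. The natural candidate for an $\mathcal{L}$-superharmonic function $\Psi$ blowing up at the origin is the fundamental solution $\Phi$ of Theorem~\ref{thm-FS} (or the Green function of a ball), but for a general measurable kernel $k$ the superharmonicity of $\Phi$ in all of $\mathbb{R}^n$ is \emph{deduced from} Lemma~\ref{lem-superharmonic} in the proof of Theorem~\ref{thm-FS}; and the explicit function $|x|^{2s-n}$ is not a supersolution for general $k$, only for the fractional Laplacian. ``Local barriers from De Giorgi--Nash--Moser theory'' does not supply an interior barrier with a prescribed blow-up at an interior point. So either you construct such a barrier from scratch (essentially redoing the Green-function analysis of Section~5), or you prove the removability of the singularity for the truncations. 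As written, the proposal identifies the right obstacle but does not overcome it.
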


To prove Lemma~\ref{lem-superharmonic}, we need the following auxiliary result.

\begin{lemma}\label{lem-supersolution}
Let $\Omega \subset \mathbb{R}^n$ be an open set containing the origin. Let $u$ be a weak supersolution of 
\begin{equation}\label{eq-main-sec4}
\mathcal{L}u=0
\end{equation}
in $\Omega \setminus \{0\}$ and bounded from below in $\Omega$. Then $\min\{u, j\}$, $j \in \mathbb{N}$, is a weak supersolution of \eqref{eq-main-sec4} in $\Omega$.
\end{lemma}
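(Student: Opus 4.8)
The plan is to prove two things about $w:=\min\{u,j\}$: first, that $w\in W^{s,2}_{\mathrm{loc}}(\Omega)$, and not merely $w\in W^{s,2}_{\mathrm{loc}}(\Omega\setminus\{0\})$, so that $\mathcal{E}(w,\varphi)$ is well defined for every $\varphi\in C^\infty_c(\Omega)$; and second, that the supersolution inequality $\mathcal{E}(w,\varphi)\ge0$ persists when $\varphi$ does not vanish at the origin. After adding a constant we may assume $u\ge0$ in $\Omega$ (the truncation level changes accordingly, which is harmless since the statement is proved for every level). Recall that $w$ is a weak supersolution of \eqref{eq-main-sec4} in $\Omega\setminus\{0\}$ by Lemma~\ref{lem-truncation}, that $0\le w\le j$, and that $w\in W^{s,2}_{\mathrm{loc}}(\Omega\setminus\{0\})\cap L^1_{2s}(\mathbb{R}^n)$ since $u$ has these properties. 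The dimensional assumption $n\ge2$, equivalently $n>2s$, is what makes the origin removable in the estimates below; we do not use Theorem~\ref{thm-integrability} here, as that would be circular.

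\emph{Step 1: regularity.} Fix $R>0$ with $\overline{B_{2R}}\subset\Omega$, and for $0<\delta<R/4$ a cutoff $\psi_\delta\in C^\infty_c(B_{2R})$ with $\psi_\delta\equiv1$ on $B_R\setminus B_{2\delta}$, $\psi_\delta\equiv0$ on $B_\delta$, $0\le\psi_\delta\le1$, and $|\nabla\psi_\delta|\lesssim\delta^{-1}$ near the origin, $|\nabla\psi_\delta|\lesssim R^{-1}$ elsewhere. Since $\supp\psi_\delta\Subset\Omega\setminus\{0\}$ and $w$ is a nonnegative bounded weak supersolution there, the standard energy estimate (Caccioppoli inequality) for weak supersolutions of $\mathcal{L}u=0$ gives
\[
\int_{B_R}\int_{B_R}\frac{|w(x)-w(y)|^2\,\psi_\delta(x)^2}{|x-y|^{n+2s}}\,\mathrm{d}y\,\mathrm{d}x\;\le\;C\Big([\psi_\delta]_{W^{s,2}(\mathbb{R}^n)}^2+\mathrm{Tail}(w_-; 0, 2R)\Big),
\]
with $C=C(n,s,\Lambda,j)$ and the $\mathrm{Tail}$ term finite since $w\in L^1_{2s}(\mathbb{R}^n)$. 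The key point is that the part of $[\psi_\delta]_{W^{s,2}(\mathbb{R}^n)}^2$ concentrated in $B_{2\delta}$ is $\lesssim\delta^{n-2s}$, so the right-hand side stays bounded uniformly in $\delta$; letting $\delta\to0$ and using Fatou's lemma yields $[w]_{W^{s,2}(B_R)}<\infty$. Together with $w\in W^{s,2}_{\mathrm{loc}}(\Omega\setminus\{0\})$, a routine patching argument gives $w\in W^{s,2}_{\mathrm{loc}}(\Omega)$. (Alternatively, decompose $B_R\setminus\{0\}$ into dyadic annuli, apply the energy estimate on each, and sum a geometric series that converges precisely because $n>2s$.)

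\emph{Step 2: the inequality.} Let $0\le\varphi\in C^\infty_c(\Omega)$ and, for small $\delta>0$, let $\eta_\delta\in C^\infty(\mathbb{R}^n)$ with $\eta_\delta\equiv0$ on $B_\delta$, $\eta_\delta\equiv1$ off $B_{2\delta}$, $0\le\eta_\delta\le1$, $|\nabla\eta_\delta|\lesssim\delta^{-1}$. Then $\varphi\eta_\delta\in C^\infty_c(\Omega\setminus\{0\})$ is nonnegative, so $\mathcal{E}(w,\varphi\eta_\delta)\ge0$; since Step 1 makes all the relevant quantities finite, bilinearity gives $\mathcal{E}(w,\varphi)=\mathcal{E}(w,\varphi\eta_\delta)+\mathcal{E}(w,\varphi(1-\eta_\delta))\ge\mathcal{E}(w,\varphi(1-\eta_\delta))$, and it remains to show $\mathcal{E}(w,\varphi(1-\eta_\delta))\to0$ as $\delta\to0$. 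Writing $\psi:=\varphi(1-\eta_\delta)$, which is supported in $B_{2\delta}$, bounded by $\|\varphi\|_\infty$, and Lipschitz with constant $\lesssim\delta^{-1}$, I would split the double integral over $\{x,y\in B_{4\delta}\}$ and $\{x\in B_{2\delta},\,y\notin B_{4\delta}\}$ (plus its mirror image). On the first region, Cauchy--Schwarz and $k\le\Lambda|x-y|^{-n-2s}$ bound the contribution by $\Lambda[w]_{W^{s,2}(B_{4\delta})}[\psi]_{W^{s,2}(B_{4\delta})}$, where $[\psi]_{W^{s,2}(B_{4\delta})}^2\lesssim\delta^{n-2s}$ by direct computation, while $[w]_{W^{s,2}(B_{4\delta})}\to0$ by Step 1 and dominated convergence. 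On the second region $\psi(y)=0$ and $|x-y|\gtrsim|y|$, so the contribution is $\lesssim\|\varphi\|_\infty\,|B_{2\delta}|\,(\delta^{-2s}+C_w)\lesssim\delta^{n-2s}+\delta^n$, where the $\delta^{-2s}$ comes from the region where $0\le w\le j$ near the origin and the finite constant $C_w$ from the far part, using $w\in L^1_{2s}(\mathbb{R}^n)$. Both terms vanish as $\delta\to0$, so $\mathcal{E}(w,\varphi)\ge0$, which completes the proof.

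I expect Step 1 to be the main obstacle: one has to verify that excising the singular point does not conceal any of the Dirichlet energy of $w$ near the origin. This is exactly where $n>2s$ is used (and why the case $n=2s$ is excluded), via the scaling $\delta^{n-2s}$ of the energy carried by a cutoff supported in a $\delta$-ball around the origin; once that is available, the cutting argument of Step 2 propagates the supersolution inequality across the origin using only elementary estimates.
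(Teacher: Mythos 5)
Your proof is correct and follows the same two-step strategy as the paper's, which is itself an outline of the proof of Theorem~1.1 in \cite{KL24}: first upgrade $\min\{u,j\}$ from $W^{s,2}_{\mathrm{loc}}(\Omega\setminus\{0\})$ to $W^{s,2}_{\mathrm{loc}}(\Omega)$, then remove the puncture from the class of test functions by a cutoff argument whose error vanishes precisely because $n>2s$. The difference lies in how the regularity step is carried out. The paper invokes the weighted energy estimate \cite[Eq.~(4.6)]{KL24} for $\bar v_j=(v_j)_-+\mathrm{Tail}((v_j)_-;0,R/2)$ with weight $\min\{\bar v_j^{\delta-1}(x),\bar v_j^{\delta-1}(y)\}$, which yields finiteness of the Gagliardo seminorm because $v_j=\min\{u-j,0\}$ is bounded and hence the weight is bounded below; you instead run the standard Caccioppoli inequality on the punctured ball with cutoffs $\psi_\delta$ vanishing on $B_\delta$ and exploit that the cutoff's own energy near the origin scales like $\delta^{n-2s}$, passing to the limit by Fatou. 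Both mechanisms are legitimate, both isolate $n>2s$ as the crucial hypothesis, and yours has the advantage of being self-contained rather than delegated to \cite{KL24}. Two details are worth making explicit: the Caccioppoli inequality should be applied to the bounded nonnegative subsolution $j-w=(j-u)_+$ (equivalently, to $(w-j)_-$ for the supersolution $w$), whose tail term is finite because $(j-u)_+\le j+u_-$ and $u\in L^1_{2s}(\mathbb{R}^n)$; and in Step~2 the function $\varphi\eta_\delta$ is only Lipschitz, so it should be admitted as a test function via Proposition~\ref{prop-test-V} (or first mollified).
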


\begin{proof}
It follows from Lemma~\ref{lem-truncation} that $u_j \coloneqq \min\{u, j\}$ is a weak supersolution of \eqref{eq-main-sec4} in $\Omega \setminus \{0\}$. Thus, it is enough to show that $u_j$ is a weak supersolution of \eqref{eq-main-sec4} in $\Omega$. This can be done by following the proof of Theorem~1.1 in our earlier paper~\cite{KL24}. For completeness, we briefly outline the main steps of the proof.

Choose $R>0$ so that $B_{R}=B_{R}(0) \Subset \Omega$ and define $v_j \coloneqq \min\{u-j, 0\}=u_j-j$. We first observe that $[(v_j)_+]_{W^{s, 2}(B_R)}=0$. On the other hand, \cite[Equation~(4.6)]{KL24}, obtained by using the Caccioppoli estimate for weak supersolution $v_j$ instead of \cite[Lemma~3.3]{KL24}, shows that
\begin{equation*}
    \int_{B_{R/2}} \int_{B_{R/2}} \frac{|\bar{v}_j(x)-\bar{v}_j(y)|^2}{|x-y|^{n+2s}} \min\{\bar{v}_j^{\delta-1}(x), \bar{v}_j^{\delta-1}(y)\} \,\mathrm{d}y \,\mathrm{d}x<\infty,
\end{equation*}
where $\bar{v}_j=(v_j)_-+\mathrm{Tail}((v_j)_-; 0, R/2)$ and $\delta>0$. Here, $\delta$ can be assumed to be small so that $\delta\leq 1$ since $v_j \in L^\infty(B_R)$. This implies that $[(v_j)_-]_{W^{s, 2}(B_{R/2})} = [\bar{v}_j]_{W^{s, 2}(B_{R/2})}< \infty$, and hence $v_j \in W^{s, 2}(B_{R/2})$. In particular, it follows that $u_j \in W^{s, 2}(B_{R/2})$ as well. We now proceed as in the second part of the proof of \cite[Theorem~1.1]{KL24} to conclude that $u_j$ is a weak supersolution of \eqref{eq-main-sec4} in $\Omega$.
\end{proof}

\begin{proof}[Proof of Lemma~\ref{lem-superharmonic}]
Lemma~\ref{lem-supersolution} shows that $u_j \coloneqq \min\{u, j\}$ is a weak supersolution of \eqref{eq-main-sec4} in $\Omega$ for any $j \in \mathbb{N}$. Let $\hat{u}_j$ be the lsc-regularization of $u_j$, i.e.,
\begin{equation*}
\hat{u}_j(x) \coloneqq
\begin{cases}
    \essliminf_{y \to x} u_j(y) &\text{if }x \in \Omega, \\
    u_j(x) &\text{if }x \in \Omega^c.
\end{cases}
\end{equation*}
Then $\hat{u}_j=u_j$ a.e.\ in $\Omega$ and it is also a weak supersolution of \eqref{eq-main-sec4} in $\Omega$ by \cite[Theorem~9]{KKP17}. Moreover, Theorem~12 and Lemma~12 in \cite{KKP17} show that $u=\lim_{j\to\infty}\hat{u}_j$ (after redefinition at the origin if necessary) is $\mathcal{L}$-superharmonic in $\Omega$.
\end{proof}

Next, we recall the isolated singularity theorem developed in our earlier work \cite{KL24} as follows. We note that this theorem was indeed proved for more general class of nonlocal nonlinear equations in \cite{KL24}.

\begin{theorem}[Isolated singularity theorem]\label{thm-iso-sing}\cite[Theorem~1.3]{KL24}
Let $\Omega \subset \mathbb{R}^n$ be an open set containing the origin. Let $u$ be $\mathcal{L}$-harmonic in $\Omega \setminus \{0\}$ and bounded from below in $\Omega$. Then either $u$ has removable singularity at the origin or
\begin{equation*}
u(x) \eqsim |x|^{2s-n}
\end{equation*}
in a neighborhood of the origin.
\end{theorem}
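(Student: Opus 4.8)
The plan is to adapt Serrin's treatment of isolated singularities for divergence-form equations \cite{GS56,Ser64a} to the nonlocal setting, the two engines being the Harnack inequality on annuli and comparison against barriers comparable to $|x|^{2s-n}$. Since the conclusion is local near the origin and $u$ is bounded below in $\Omega$, I would first subtract a constant and fix $\overline{B}_\rho\Subset\Omega$ with $u\geq 0$ on $B_\rho\setminus\{0\}$. By Lemma~\ref{lem-superharmonic}, after redefinition at the origin, $u$ is $\mathcal{L}$-superharmonic in $\Omega$; hence $u\in L^1_{2s}(\mathbb{R}^n)$ and, by Theorem~\ref{thm-integrability}, $u$ has the stated local integrability in $\Omega$, while $u\in W^{s,2}_{\mathrm{loc}}(\Omega\setminus\{0\})$ since it is $\mathcal{L}$-harmonic there; in particular $\mathrm{Tail}(u_-;0,\rho)$ and all tail quantities below are finite. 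Writing $m(r)$ and $M(r)$ for the essential infimum and supremum of $u$ over the annulus $\{r/2<|x|<2r\}$, $0<r<\rho/8$, the two alternatives in the theorem will correspond to whether $\liminf_{r\to0}m(r)$ is finite or infinite.

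The quantitative engine is the nonlocal Harnack inequality for $\mathcal{L}$-harmonic functions (Lemma~\ref{lem-harnack}; this is the only place where $n\geq2$ enters), applied to the nonnegative function $u$ on $\{r/4<|x|<4r\}$:
\[
M(r) \leq C\Big(m(r) + r^{2s}\int_{\mathbb{R}^n\setminus B_\rho}\frac{u_-(y)}{|y|^{n+2s}}\,\mathrm{d}y\Big) \leq C\,m(r) + C_0 ,
\]
with $C=C(n,s,\Lambda)$ and $C_0$ depending also on $\rho$ and $\mathrm{Tail}(u_-;0,\rho)$. Chaining this over overlapping dyadic annuli, the quantities $m(r),M(r),m(2r),M(2r)$ are comparable up to the additive constant $C_0$, so $u(x)\eqsim m(|x|)+C_0$ near $0$; moreover, using that $u$ is genuinely $\mathcal{L}$-harmonic in the punctured ball (not just superharmonic), a Harnack-chain argument shows that $\liminf_{r\to0}m(r)<\infty$ already forces $u$ to be bounded in a punctured neighborhood of $0$.

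In the bounded case I would show that the singularity is removable. Since $2s<2\leq n$, the point $\{0\}$ has zero $W^{s,2}$-capacity, so there are cutoffs $\eta_j\in C^\infty_c(B_{1/j})$ with $\eta_j\equiv1$ near $0$ and $[\eta_j]_{W^{s,2}(\mathbb{R}^n)}\to0$. Testing the weak formulation of $\mathcal{L}u=0$ in $B_{\rho/2}\setminus\{0\}$ against $(1-\eta_j)\varphi$ for $\varphi\in C^\infty_c(B_{\rho/2})$ and letting $j\to\infty$ — passing to the limit in $\mathcal{E}(u,(1-\eta_j)\varphi)$ by means of the boundedness of $u$ near $0$ and $u\in W^{s,2}_{\mathrm{loc}}(\Omega\setminus\{0\})$ — yields $\mathcal{E}(u,\varphi)=0$ for all $\varphi\in C^\infty_c(B_{\rho/2})$. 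Together with the local integrability of $u$ from Theorem~\ref{thm-integrability}, this means that $u$, so redefined, extends to an $\mathcal{L}$-harmonic function across the origin.

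If instead $m(r)\to\infty$ as $r\to0$, then $m(r)\gg C_0$ for small $r$, so the Harnack estimate gives $M(r)\leq C'm(r)$, and it remains to prove $m(r)\eqsim r^{2s-n}$. Here I would compare $u$, on each dyadic annulus $\{2^{-k-1}\rho<|x|<2^{-k}\rho\}$, with sub- and supersolution barriers comparable to $|x|^{2s-n}$ and suitably normalised on the two bounding spheres — for instance truncations of $\min\{|x|^{2s-n},R^{2s-n}\}$, whose weak $\mathcal{L}$-defect is controlled by the two-sided kernel bound, or capacitary potentials of annuli whose $|x|^{2s-n}$-profile follows from Harnack and scaling. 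Since one does not know a priori whether $u$ or the barrier dominates outside the annulus, I would invoke the localized comparison principle of Corollary~\ref{cor-loc-comp} in place of a classical pointwise comparison: it trades the exterior pointwise inequality for a tail term, which is of lower order because $m(r)\to\infty$. This produces a one-step estimate $|m(r/2)-2^{n-2s}m(r)|\leq\varepsilon\,m(r)$ valid for $r$ small (any fixed $\varepsilon>0$), and iterating over $k$ integrates to $m(r)\eqsim r^{2s-n}$, hence $u(x)\eqsim|x|^{2s-n}$ in a neighborhood of the origin. I expect the main obstacle to be exactly this last step: constructing honest $\mathcal{L}$-sub/supersolution barriers with the correct $|x|^{2s-n}$-profile and boundary normalisation on annuli, and carrying out the comparison so that every nonlocal tail contribution — notably the part of $u$ coming from the smaller ball, where $u$ is largest — is absorbed into a lower-order error; this is precisely where the localized comparison principle, rather than a pointwise one, is indispensable. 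A second, milder technical point is the gap-filling in the bounded case, which again relies on $u$ being $\mathcal{L}$-harmonic, not merely $\mathcal{L}$-superharmonic, near the origin.
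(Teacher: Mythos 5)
First, a point of reference: the paper does not prove Theorem~\ref{thm-iso-sing} at all --- it is imported verbatim from \cite[Theorem~1.3]{KL24} --- so there is no in-paper proof to compare against. Judged on its own, your skeleton (normalize to $u\ge 0$ near the origin, Harnack on shrinking annuli, a dichotomy, barrier comparison in the singular case, a capacity argument in the bounded case) is the right Serrin-type architecture, and your observation that a pointwise exterior comparison is unavailable and must be replaced by the localized comparison principle is exactly the difficulty the paper emphasizes. However, two of your steps are asserted rather than proved, and at least one fails as stated.

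The first gap is the claim that $\liminf_{r\to 0}m(r)<\infty$ ``already forces $u$ to be bounded in a punctured neighborhood of $0$'' by a Harnack chain. A chain connecting a point at radius $r$ to a point at radius $r_j$ (where $m(r_j)\le K$) inside the punctured ball has length $\eqsim \log(r_j/r)$, so the constant degrades like $(r_j/r)^{\log C}$; if the sequence realizing the liminf is sparse, this gives no uniform bound on intermediate annuli. Nor can you substitute a maximum principle on $\{r_{j+1}<|x|<r_j\}$: Corollary~\ref{cor-loc-comp} still requires $u\ge v$ pointwise on the inner ball $\overline{B}_{r_{j+1}}$, which is precisely what is unknown. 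The robust route --- and, judging from the fragments of \cite{KL24} reused in this paper, the intended one --- is through the Riesz mass: Lemma~\ref{lem-supersolution} extends the truncations $\min\{u,j\}$ as supersolutions across the origin (this also supplies the $W^{s,2}$-regularity of $u$ near $0$ that your capacity/removability argument silently needs in order for $\mathcal{E}(u,\eta_j\varphi)$ to make sense), and Lemma~\ref{lem-const} produces a single constant $a=\mathcal{E}(u,\varphi)\ge 0$; the dichotomy should be $a=0$ versus $a>0$, not a dichotomy on $\liminf m(r)$. The second gap is the barrier step: truncations of $|x|^{2s-n}$ are not $\mathcal{L}$-sub/supersolutions for a general measurable kernel $k$, and you do not estimate their defect; the natural barrier is the Green function $G^R(\cdot,0)$ of Theorem~\ref{thm-Green}, which is available from \cite{KKL23} independently of the present theorem and already carries two-sided $|x-y|^{2s-n}$ bounds. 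Moreover, iterating a one-step inequality $m(r/2)\le(2^{n-2s}+\varepsilon)\,m(r)$ with a fixed $\varepsilon>0$ only yields $m(r)\lesssim r^{2s-n-\delta(\varepsilon)}$ --- the exponent drifts unless the errors are summable over dyadic scales. The sharp upper bound is instead obtained at each scale in one stroke, by testing $\mathcal{E}(u,\eta_r)=a$ with a cutoff at scale $r$ to get $\fint_{B_{2r}\setminus B_r}u\lesssim r^{2s-n}$ and then applying the annulus Harnack inequality once, while the matching lower bound comes from comparison with $G^R(\cdot,0)$ when $a>0$.
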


The following lemma will also be useful in the proof of Theorem~\ref{thm-weak-Bocher}.

\begin{lemma}\cite[Lemma~5.3]{KL24}\label{lem-const}
Let $\varphi \in W^{s, 2}(\Omega) \cap L^\infty(\Omega)$ be such that $\mathrm{supp}\,{\varphi} \Subset \Omega$ and $\varphi= 1$ in a neighborhood of the origin. If $u$ is a weak solution of \eqref{eq-main-sec4} in $\Omega \setminus \{0\}$, then the value
\begin{equation*}
a \coloneqq \mathcal{E}(u, \varphi)
\end{equation*}
is finite and independent of the particular choice of $\varphi$.
\end{lemma}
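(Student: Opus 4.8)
The plan is to establish the two claims in turn: first that the double integral defining $\mathcal{E}(u,\varphi)$ is absolutely convergent for every admissible $\varphi$, and then that its value does not depend on $\varphi$. \emph{Step 1 (finiteness).} Fix an admissible $\varphi$, choose $\rho>0$ so small that $\varphi\equiv 1$ on $\overline{B_{2\rho}}$ and $\overline{B_{2\rho}}\Subset\Omega$, set $K=\operatorname{supp}\varphi\Subset\Omega$, and fix a bounded open set $\Omega_1$ with $K\setminus B_\rho\Subset\Omega_1\Subset\Omega\setminus\{0\}$. Two facts will be used repeatedly: since $u\in L^1_{2s}(\mathbb{R}^n)$, $u$ is integrable over every bounded subset of $\mathbb{R}^n$ and $\int_{\mathbb{R}^n\setminus B_\tau}|u(y)|\,|y|^{-n-2s}\,\mathrm{d}y<\infty$ for each $\tau>0$; and since $u$ is a weak solution of \eqref{eq-main-sec4} in $\Omega\setminus\{0\}$, one has $u\in W^{s,2}(\Omega_1)$, while $\varphi\in W^{s,2}(\mathbb{R}^n)\cap L^\infty(\mathbb{R}^n)$ after extending by zero. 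The integrand $(u(x)-u(y))(\varphi(x)-\varphi(y))k(x,y)$ vanishes unless $\varphi(x)\neq\varphi(y)$, which forces $x\in K$ or $y\in K$ and is impossible when both $x,y\in B_{2\rho}$; by symmetry of the integrand it therefore suffices to bound $\int_K\int_{\mathbb{R}^n}|u(x)-u(y)|\,|\varphi(x)-\varphi(y)|\,k(x,y)\,\mathrm{d}y\,\mathrm{d}x$. Splitting the $x$-integration into $x\in K\cap B_\rho$ and $x\in K\setminus B_\rho$ produces three contributions, each controlled by a different mechanism: (a) if $x\in K\cap B_\rho$, the integrand is nonzero only for $y\notin B_{2\rho}$, so $|x-y|\gtrsim\rho$ and $|x-y|\gtrsim|y|$, hence $k(x,y)$ is bounded by a constant times $\min\{\rho^{-n-2s},|y|^{-n-2s}\}$, and finiteness follows from the two facts above together with $u\in L^1(B_\rho)$; (b) if $x\in K\setminus B_\rho$ and $y\in\Omega_1$, then $k(x,y)\le\Lambda|x-y|^{-n-2s}$ and the Cauchy--Schwarz inequality bounds this contribution by $\Lambda[u]_{W^{s,2}(\Omega_1)}[\varphi]_{W^{s,2}(\Omega_1)}<\infty$; (c) if $x\in K\setminus B_\rho$ and $y\notin\Omega_1$, then $|x-y|\geq\operatorname{dist}(K\setminus B_\rho,\partial\Omega_1)>0$ and $|x-y|\gtrsim|y|$ for large $|y|$, so again $k$ is dominated by a constant times $\min\{1,|y|^{-n-2s}\}$ and the two facts apply. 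Adding these up, $\mathcal{E}(u,\varphi)$ is absolutely convergent, hence well defined and finite.

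\emph{Step 2 (independence of $\varphi$).} Given two admissible functions $\varphi_1,\varphi_2$, put $\psi=\varphi_1-\varphi_2$. Then $\psi\in W^{s,2}(\Omega)\cap L^\infty(\Omega)$, $\psi$ has compact support in $\Omega$ (so $\psi\in W^{s,2}(\mathbb{R}^n)$ after extending by zero), and $\psi\equiv 0$ in a neighborhood of the origin; thus $\operatorname{supp}\psi$ is a compact subset of the open set $\Omega\setminus\{0\}$. Choose a bounded open set $G$ with smooth boundary such that $\operatorname{supp}\psi\subset G$ and $\overline{G}\subset\Omega\setminus\{0\}$, and an open set $G'$ with $G\Subset G'\Subset\Omega\setminus\{0\}$. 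Since $\psi\in W^{s,2}(G')$ and $\psi=0$ a.e.\ outside $G$, Lemma~\ref{lem-test-V} gives $\psi\in V^{s,2}_0(G)$. Since $u$ is a weak solution of \eqref{eq-main-sec4} in $G$ (being one in $\Omega\setminus\{0\}\supset G$) and $u\in W^{s,2}(G')\cap L^1_{2s}(\mathbb{R}^n)$, Proposition~\ref{prop-test} applied with $f=0$ (to both $\psi$ and $-\psi$) yields $\mathcal{E}(u,\psi)=0$. As the integrand associated with $\psi$ is the difference of the two integrands associated with $\varphi_1$ and $\varphi_2$, both absolutely integrable by Step 1, bilinearity gives $\mathcal{E}(u,\varphi_1)-\mathcal{E}(u,\varphi_2)=\mathcal{E}(u,\psi)=0$. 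Hence $a=\mathcal{E}(u,\varphi)$ does not depend on the choice of $\varphi$.

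\emph{Main obstacle.} The delicate point is Step 1. A weak solution of $\mathcal{L}u=0$ in $\Omega\setminus\{0\}$ carries no $W^{s,2}$ control across any neighborhood of the origin, so a naive pairing of $u$ against a function that equals $1$ near $0$ need not even make sense. The purpose of the hypothesis that $\varphi$ be locally constant at $0$ is precisely that it eliminates the region $B_{2\rho}\times B_{2\rho}$, on which the diagonal singularity of $k$ would otherwise interact with the possible singularity of $u$ at the origin; once that region is removed, only $L^1$- and tail-type integrability of $u$ near the origin is needed, and both are supplied for free by $u\in L^1_{2s}(\mathbb{R}^n)$.
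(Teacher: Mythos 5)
Your proof is correct. Note that the paper does not prove this lemma itself but imports it from \cite[Lemma~5.3]{KL24}, so there is no in-text argument to compare against; your two-step argument (absolute convergence via the decomposition $x\in K\cap B_\rho$ versus $x\in K\setminus B_\rho$ combined with Cauchy--Schwarz on an annular region and $L^1_{2s}$-tail bounds elsewhere, then testing with the difference $\varphi_1-\varphi_2$, which lies in $V^{s,2}_0$ of a set compactly contained in $\Omega\setminus\{0\}$, via Lemma~\ref{lem-test-V} and Proposition~\ref{prop-test}) is exactly the natural route and correctly identifies why the hypothesis that $\varphi$ is constant near the origin is what makes the pairing well defined.
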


We are now in a position to prove Theorem~\ref{thm-weak-Bocher}.

\begin{proof}[Proof of Theorem~\ref{thm-weak-Bocher}]
In view of Theorem~\ref{thm-iso-sing}, we can fix $R \in (0,1/2)$ so that $B_R = B_R(0) \Subset \Omega$ and that
	\begin{equation}\label{eq-asymp}
		u \eqsim |x|^{2s-n}
		\quad \text{in }B_R.
	\end{equation}
	Since being a solution is a local property and we already know that $\mathcal{L}u=0$ in $\Omega \setminus \{0\}$, it suffices to prove \eqref{eq-delta-weak} for any $\varphi \in C_c^{\infty}(B_R)$.
	
	Let $\varphi \in C_c^{\infty}(B_R)$ and let $r<R/2$. Choose a cut-off function $\eta_r \in C_c^{\infty}(B_r)$ such that $0\leq \eta_r \leq 1$ in $\mathbb{R}^n$, $\eta_r=1$ in $B_{r/2}$, and $|D\eta_r| \leq 4/r$. We define
	\begin{equation*}
		\varphi_r \coloneqq (1-\eta_r)\varphi+\eta_r \varphi(0).
	\end{equation*}
	Since $\varphi_r=\varphi(0)$ in a neighborhood of the origin, it follows from Lemma~\ref{lem-const} that
	\begin{equation*}
		\mathcal{E}(u, \varphi_r)=a \varphi(0) \quad \text{for some $a \geq 0$}.
	\end{equation*}
     Therefore, it is enough to show that $\mathcal{E}(u, \varphi_r) \to \mathcal{E}(u, \varphi)$ as $r \to 0$.
    
	Note that $\eta_r(x) \to 0$ for any $x \in B_R \setminus \{0\}$ as $r \to 0$, and that
	\begin{align*}
		\mathcal{E}(u, \varphi-\varphi_r)
        &=\mathcal{E}(u, (\varphi-\varphi(0))\eta_r)\\
		&=\int_{B_R} \int_{B_R} (u(x)-u(y))(\varphi(x)-\varphi(y))\eta_r(x) k(x, y) \,\mathrm{d}y\,\mathrm{d}x\\
		&\quad+\int_{B_R} \int_{B_R} (u(x)-u(y))(\varphi(y)-\varphi(0))(\eta_r(x)-\eta_r(y))k(x, y) \,\mathrm{d}y\,\mathrm{d}x\\
		&\quad+2\int_{B_R} \int_{B_R^c} (u(x)-u(y))(\varphi(x)-\varphi(0))\eta_r(x)k(x, y) \,\mathrm{d}y\,\mathrm{d}x \\
		&\eqqcolon I_1+I_2+I_3.
	\end{align*}
We choose $\sigma \in (0,s)$ close to $s$ so that $\sigma \geq 2s-1$. Then the integrand in $I_1$ is bounded by the function
\begin{equation*}
    \Lambda \|D\varphi\|_{L^\infty(B_R)} \frac{|u(x)-u(y)|}{|x-y|^{n+2s-1}},
\end{equation*}
which belongs to $L^1(B_R \times B_R)$ since
\begin{align*}
    \int_{B_R} \int_{B_R} \frac{|u(x)-u(y)|}{|x-y|^{n+2s-1}} \,\mathrm{d}y \,\mathrm{d}x \leq \int_{B_R} \int_{B_R} \frac{|u(x)-u(y)|}{|x-y|^{n+\sigma}} \,\mathrm{d}y \,\mathrm{d}x < \infty
\end{align*}
by Lemma~\ref{lem-superharmonic} and Theorem~\ref{thm-integrability}. Thus, $I_1 \to 0$ as $r \to 0$ by the dominated convergence theorem.

Next, we observe that for any $\theta \geq 2$ and $r \in (0, R/\theta)$,
\begin{equation*}
	\begin{aligned}
		|I_2| &\leq \Lambda \int_{B_{\theta r}} \int_{B_{\theta r}} \frac{|u(x)-u(y)||\varphi(y)-\varphi(0)||\eta_r(x)-\eta_r(y)|}{|x-y|^{n+2s}}\,\mathrm{d}y\,\mathrm{d}x\\
		&\quad+4\Lambda\|\varphi\|_{L^\infty(B_R)} \int_{B_{r}} \int_{B_R \setminus B_{\theta r}} \frac{|u(x)-u(y)|}{|x-y|^{n+2s}}\,\mathrm{d}y\,\mathrm{d}x\\
		&\eqqcolon I_{2, 1}+I_{2,2}.
	\end{aligned}
\end{equation*}
Since $|\varphi(y)-\varphi(0)| \leq \|D\varphi\|_{L^\infty(B_{\theta r})}|y| < \theta\|D\varphi\|_{L^\infty(B_{R})}r$ for $y \in B_{\theta r}$ and $|D\eta_r|\leq 4/r$, we have
\begin{equation*}
	I_{2,1} \leq 4\theta\Lambda\|D\varphi\|_{L^\infty(B_{R})} \int_{B_{\theta r}} \int_{B_{\theta r}} \frac{|u(x)-u(y)|}{|x-y|^{n+2s-1}}\,\mathrm{d}y\,\mathrm{d}x \leq 4\theta\Lambda\|D\varphi\|_{L^\infty(B_{R})}[u]_{W^{\sigma, 1}(B_{\theta r})}.
\end{equation*}
Moreover, for $x \in B_{r}$ and $y \in B_R \setminus B_{\theta r}$, we get from \eqref{eq-asymp} that
\begin{equation*}
	0<u(y) \lesssim |y|^{2s-n} \leq (\theta|x|)^{2s-n} < |x|^{2s-n},
\end{equation*}
where the comparable constants are independent of $\theta$ and $r$. Thus, we have
\begin{equation*}
	I_{2, 2} \lesssim \int_{B_{r}} \int_{B_R \setminus B_{\theta r}} \frac{u(x)+u(y)}{|x-y|^{n+2s}}\,\mathrm{d}y\,\mathrm{d}x \lesssim \int_{B_{r}} \int_{B_R \setminus B_{\theta r}} \frac{|x|^{2s-n}}{|y|^{n+2s}}\,\mathrm{d}y\,\mathrm{d}x \lesssim \theta^{-2s}.
\end{equation*}
We now combine two estimates above to derive
\begin{equation*}
    |I_2| \lesssim \theta[u]_{W^{\sigma, 1}(B_{\theta r})}+\theta^{-2s},
\end{equation*}
which implies
\begin{equation*}
    \limsup_{r \to 0}|I_2| \lesssim \theta^{-2s}.
\end{equation*}
Since $\theta\geq2$ can be arbitrarily large, we conclude that $\lim_{r \to 0}I_2=0$.

Finally, we note that the integrand in $I_3$ is bounded by the function
\begin{equation*}
    2\Lambda \|\varphi\|_{L^\infty(B_R)} \frac{|u(x)-u(y)|}{|x-y|^{n+2s}} \chi_{B_{R/2} \times B_R^c}(x, y)
\end{equation*}
since $\mathrm{supp}\,\eta_r \subset B_r \subset B_{R/2}$. This function belongs to $L^1(\mathbb{R}^n \times \mathbb{R}^n)$ since
\begin{align*}
    \int_{B_{R/2}} \int_{B_R^c} \frac{|u(x)-u(y)|}{|x-y|^{n+2s}} \,\mathrm{d}y \,\mathrm{d}x
    &\leq \int_{B_{R/2}} \int_{B_R^c} \frac{|u(x)|+|u(y)|}{(|y|/2)^{n+2s}} \,\mathrm{d}y \,\mathrm{d}x \\
    &\lesssim R^{n-2s} \left( \fint_{B_{R/2}} |u| \,\mathrm{d}x + \mathrm{Tail}(u, 0, R) \right) < \infty.
\end{align*}
Thus, $I_3 \to 0$ as $r \to 0$ by the dominated convergence theorem, and the proof is complete.
\end{proof}

We remark that the proof of Theorem~\ref{thm-weak-Bocher} can easily be extended to more general class of nonlocal nonlinear operators studied in \cite{KL24}.

\section{Fundamental solution}\label{sec-fundamentalsol}

This section is devoted to the proofs of existence, uniqueness, and some useful properties of the fundamental solution for $\mathcal{L}$. One natural approach is to first prove the existence and asymptotic properties of the heat kernel for the associated parabolic operator $\partial_t + \mathcal{L}$ and then integrate it in time. The existence and pointwise estimates for the heat kernel are now well established; see, for example, Bass--Levin~\cite{BL02} and Chen--Kumagai~\cite{CK03}. See also \cite{BGK09,CKW21,GHH17,GHH18,GHL14,LW25} for related results in more general settings. Alternatively, one can directly construct the fundamental solution as the limit of the Green functions for $\mathcal{L}$ in balls as the radius tends to infinity. In this work, we adopt the latter approach, as it provides detailed information on regularity and yields a notion of solution that is suitable for our purpose.

When $\mathcal{L}$ is given by the fractional Laplacian $(-\Delta)^s$, the fundamental solution is understood as the distributional solution of $(-\Delta)^s \Phi_s = \delta_0$ in $\mathscr{D}'(\mathbb{R}^n)$ (see Definition~\ref{def-distribution}), but this definition is not appropriate for general operators $\mathcal{L}$ with measurable kernels $k$ because $\mathcal{L}\varphi$ is not well defined in general even for nice test functions $\varphi \in C^\infty_c(\mathbb{R}^n)$. Instead, we define the fundamental solution for $\mathcal{L}$ by the function $\Phi$ satisfying the properties \eqref{eq-FS1}--\eqref{eq-FS3} in Theorem~\ref{thm-FS}. Note that this definition is consistent with the standard distributional definition in the case of the fractional Laplacian, as we have already observed in the proof of Theorem~\ref{thm-sing-Liouv-s}.

We begin by recalling the definition of the Green function of $\mathcal{L}$ on balls, following Kassmann--Kim--Lee~\cite{KKL23}, who adopt the classical definition of Littman--Stampacchia--Weinberger~\cite{LSW63}.

\begin{definition}\cite[Definition~1.2]{KKL23}
A measurable function $G^R: \mathbb{R}^n \times \mathbb{R}^n \to [0, \infty]$ is called a \emph{Green function} of $\mathcal{L}$ on $B_R$ if $G^R(\cdot, y) \in L^1(B_R)$ for each $y \in B_R$, $G^R=0$ a.e.\ on $(\mathbb{R}^n \times \mathbb{R}^n) \setminus (B_R \times B_R)$, and
\begin{equation*}
    \int_{\mathbb{R}^n} G^R(x, y)\psi(x) \,\mathrm{d}x = \varphi(y)
\end{equation*}
for every $y \in B_R$, $\varphi \in V^{s, 2}_0(B_R) \cap C(B_R)$\footnote{In \cite{KKL23}, the function space $H_{B_R}(\mathbb{R}^n; k) \coloneqq \{u \in W^{s, 2}(\mathbb{R}^n): u=0\text{ a.e.\ on }B_R^c\}$ is used instead of $V^{s, 2}_0(B_R)$. However, Lemma~\ref{lem-test} shows that this space coincides with the space $V^{s, 2}_0(B_R)$ since $B_R$ has a smooth boundary.}, and $\psi \in L^\infty(B_R)$ with
\begin{equation*}
    \mathcal{E}(\varphi, v) = \langle \psi, v \rangle \quad\text{for all } v \in V^{s, 2}_0(B_R).
\end{equation*}
\end{definition}

The main results established in \cite{KKL23} include the following\footnote{In \cite{KKL23}, the authors established the Green function estimates \eqref{eq-Green-est}, which recover the classical result $G(x, y) \eqsim |x-y|^{2-n}$ in the limit $s \nearrow 1$, by carefully investigating the comparison constants in \eqref{eq-Green-est}. To ensure this robustness, they assumed $n>2$. However, the (non-robust) estimates \eqref{eq-Green-est} can still be derived from the same arguments under the weaker condition $n > 2s$.}.

\begin{theorem}\label{thm-Green}
There exists a unique Green function $G^R$ of $\mathcal{L}$ on $B_R$. Moreover, $G^R$ satisfies
\begin{align}\label{eq-Green-est}
\begin{split}
G^R(x, y) &\leq C |x-y|^{2s-n} \quad\text{for all }x, y \in \mathbb{R}^n, \\
G^R(x, y) &\geq c |x-y|^{2s-n} \quad\text{for all }x, y \in B_R \text{ with }|x-y| \leq \mathrm{dist}(y, \partial B_R)/2,
\end{split}
\end{align}
where constants $C\geq c>0$ depend only on $n$, $s$, and $\Lambda$.
\end{theorem}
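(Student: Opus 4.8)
The plan is to adapt the classical Littman--Stampacchia--Weinberger construction of the Green function to the nonlocal bilinear form $\mathcal{E}$, and then to read off the two-sided bound \eqref{eq-Green-est} from the De Giorgi--Nash--Moser regularity theory and the Harnack inequality for $\mathcal{L}$. For the existence, I would first note that, since $B_R$ is bounded, $\mathcal{E}$ is a bounded, symmetric, and coercive bilinear form on the Hilbert space $V^{s,2}_0(B_R)=W^{s,2}_{B_R}(\mathbb{R}^n)$ (boundedness from the upper ellipticity bound, coercivity from the lower bound together with the fractional Poincar\'e inequality), and that the fractional Sobolev embedding $V^{s,2}_0(B_R)\hookrightarrow L^{2n/(n-2s)}(B_R)$ gives $L^\infty(B_R)\subset (V^{s,2}_0(B_R))^{\ast}$. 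By Lax--Milgram, each $\psi\in L^\infty(B_R)$ yields a unique $\varphi_\psi\in V^{s,2}_0(B_R)$ with $\mathcal{E}(\varphi_\psi,v)=\langle\psi,v\rangle$ for all $v\in V^{s,2}_0(B_R)$, and by interior H\"older regularity for $\mathcal{L}$ one may take $\varphi_\psi\in C(B_R)$. Testing the equation with the truncations of $\varphi_\psi$ and applying the Sobolev inequality (Stampacchia's argument) produces the distributional estimate $|\{|\varphi_\psi|>k\}|\lesssim (k^{-1}\|\psi\|_{L^1(B_R)})^{n/(n-2s)}$, with a constant depending only on $n$, $s$, $\Lambda$.

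To build $G^R(\cdot,y)$ for $y\in B_R$, I would take $\psi_\varepsilon:=|B_\varepsilon(y)|^{-1}\chi_{B_\varepsilon(y)}$ and set $G^R_\varepsilon(\cdot,y):=\varphi_{\psi_\varepsilon}$. The Marcinkiewicz estimate is uniform in $\varepsilon$, so $\{G^R_\varepsilon(\cdot,y)\}$ is bounded and equi-integrable in $L^1(B_R)$ and, along a subsequence, converges weakly in $L^1(B_R)$ to a nonnegative $G^R(\cdot,y)$ supported in $B_R$. The defining identity passes to the limit because $\int G^R_\varepsilon(x,y)\psi(x)\,\mathrm{d}x=\mathcal{E}(G^R_\varepsilon(\cdot,y),\varphi_\psi)=\fint_{B_\varepsilon(y)}\varphi_\psi\to\varphi_\psi(y)$ by continuity of $\varphi_\psi$, while the left-hand side tends to $\int G^R(x,y)\psi(x)\,\mathrm{d}x$ by weak convergence; setting $G^R(\cdot,y)\equiv0$ for $y\notin B_R$ completes the construction. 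Uniqueness is then immediate: by Lax--Milgram together with the interior regularity, every $\psi\in L^\infty(B_R)$ is the source of some $\varphi_\psi\in V^{s,2}_0(B_R)\cap C(B_R)$, so two Green functions pair to the same value against every $\psi\in L^\infty(B_R)$ for each fixed $y$, and since $G^R(\cdot,y)\in L^1(B_R)$ this forces them to agree a.e.

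For the bound \eqref{eq-Green-est}, I would first verify that $G^R(\cdot,y)$ is $\mathcal{L}$-harmonic in $B_R\setminus\{y\}$, by passing to the limit in $\mathcal{E}(G^R_\varepsilon(\cdot,y),\varphi)=\fint_{B_\varepsilon(y)}\varphi=0$ for $\varphi\in C_c^\infty(B_R\setminus\{y\})$ (the approximants converge locally uniformly away from $y$ by the Harnack inequality and the uniform $L^1$ bound), and that $G^R(\cdot,y)$ is $\mathcal{L}$-superharmonic in $B_R$ as a decreasing limit of nonnegative supersolutions, using the potential theory of \cite{KKP17}. The distributional estimate above gives $|\{G^R(\cdot,y)>k\}|\lesssim k^{-n/(n-2s)}$ uniformly in $R$ and $y$; combining this with the local boundedness estimate for $\mathcal{L}$-harmonic functions applied on $B_{|x-y|/2}(x)$ (which avoids the pole) and summing the resulting nonlocal tail over dyadic shells --- each contributing a geometric amount totalling $\lesssim|x-y|^{2s-n}$ --- yields $G^R(x,y)\lesssim|x-y|^{2s-n}$ for $x,y\in B_R$, and hence for all $x,y$ since $G^R$ vanishes otherwise. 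For the matching lower bound when $\rho:=|x-y|\le\mathrm{dist}(y,\partial B_R)/2$, the Harnack inequality (chained over the annulus $\{\rho/4<|z-y|<2\rho\}\subset B_R\setminus\{y\}$) gives $G^R(x,y)\eqsim\sup_{|z-y|=\rho}G^R(z,y)$, and this supremum is bounded below by conservation of mass: testing the equation for $G^R_\varepsilon(\cdot,y)$ against a cutoff $\eta\equiv1$ near $y$ with $\supp\eta\subset B_\rho(y)$ gives $\mathcal{E}(G^R_\varepsilon(\cdot,y),\eta)=1$, and estimating this energy from above by means of the upper bound just obtained together with a Caccioppoli-type estimate yields $1\lesssim\rho^{n-2s}\sup_{|z-y|=\rho}G^R_\varepsilon(z,y)$; letting $\varepsilon\to0$ gives $G^R(x,y)\gtrsim\rho^{2s-n}$.

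The analytically heaviest part is the two-sided pointwise bound. For the upper bound one must carry the nonlocal tail terms through the Moser iteration carefully so that the sharp homogeneity $|x-y|^{2s-n}$ is recovered rather than a power contaminated by $R$; for the lower bound one needs the precise interplay between the Harnack inequality and the mass-conservation identity. Within the construction itself, the delicate point is the \emph{uniform}-in-$\varepsilon$ Marcinkiewicz estimate, i.e.\ Stampacchia's truncation argument transported to the nonlocal setting, which hinges on the lower ellipticity bound and the fractional Sobolev inequality; the remaining steps are a routine adaptation of the classical theory. (Alternatively, \eqref{eq-Green-est} could be obtained by integrating in time two-sided Dirichlet heat kernel estimates for $\partial_t+\mathcal{L}$, but the elliptic route above yields directly the regularity and the notion of solution needed in the sequel.)
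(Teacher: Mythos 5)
The paper does not actually prove this theorem: it is imported from Kassmann--Kim--Lee \cite{KKL23} (with a footnote observing that the non-robust form of \eqref{eq-Green-est} only needs $n>2s$), and the later Lemma~\ref{lem-Green-weak} confirms that the construction there is exactly the one you describe, via regularized Green functions $G^R_\rho(\cdot,y)\in V^{s,2}_0(B_R)$ solving $\mathcal{E}(G^R_\rho(\cdot,y),\varphi)=\fint_{B_\rho(y)}\varphi$. Your sketch therefore reproduces the cited proof in its essentials --- Lax--Milgram plus Stampacchia truncation for the uniform weak-type bound, compactness, and Harnack/mass-conservation for the two-sided estimate --- and is sound as a plan; the only genuinely delicate step is the one you already flag, namely the lower bound via the energy identity, where one should work with the truncation of $G^R_\varepsilon$ at the level $\inf_{\partial B_\rho(y)}G^R_\varepsilon$ (or with the measure of the superlevel set $\{G^R_\varepsilon>k\}$) rather than try to bound the full energy, which is dominated by the region near the pole.
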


In the same spirit as Theorem~\ref{thm-weak-Bocher}, the Green function $G^R$ satisfies the following property.

\begin{lemma}\label{lem-Green-weak}
Let $G^R$ be the Green function of $\mathcal{L}$ on $B_R$, and let $y \in B_R$. Then $G^R$ satisfies
\begin{equation*}
\mathcal{E}(G^R(\cdot, y), \varphi) = \varphi(y)
\end{equation*}
for any $\varphi \in C^\infty_c(B_R)$. Moreover, $G^R(\cdot, y) \in W^{s, 2}_{\mathrm{loc}}(B_R \setminus \{y\}) \cap L^1_{2s}(\mathbb{R}^n)$. In particular, $G^R(\cdot, y)$ is a weak solution of $\mathcal{L}G^R(\cdot, y)=0$ in $B_R \setminus \{y\}$.
\end{lemma}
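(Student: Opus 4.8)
The plan is to transfer the information encoded in the defining property of $G^R$—which is stated in terms of the pairing $\langle \psi, v\rangle$ for solutions $\varphi$ of $\mathcal{E}(\varphi, v) = \langle\psi, v\rangle$—into a statement about testing $\mathcal{E}(G^R(\cdot, y), \cdot)$ against smooth compactly supported functions. The key observation is that for a fixed $\varphi \in C^\infty_c(B_R)$, the linear functional $v \mapsto \mathcal{E}(\varphi, v)$ on $V^{s,2}_0(B_R)$ is bounded (since $\varphi \in W^{s,2}(\mathbb{R}^n)$ with compact support, so $\mathcal{E}(\varphi, \cdot)$ is controlled by $[\cdot]_{V^{s,2}(B_R)}$), hence by Riesz representation there is $\psi \in (V^{s,2}_0(B_R))^*$; one must however check that $\psi$ can be realized as an $L^\infty(B_R)$ function. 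This is where the smoothness of $\varphi$ is used: since $\varphi \in C^\infty_c(B_R) \subset \mathscr{S}(\mathbb{R}^n)$, one expects $\psi = \mathcal{L}\varphi$ in an appropriate sense, but because $k$ is only measurable this needs care. Instead I would argue that the map $v \mapsto \mathcal{E}(\varphi, v)$ extends to $L^2(B_R)$: writing $\mathcal{E}(\varphi, v) = \int_{B_R} v(x)\bigl(2\,\mathrm{p.v.}\!\int (\varphi(x)-\varphi(y))k(x,y)\,dy\bigr)dx + \text{(boundary terms from } B_R^c)$, and the inner integral is bounded by $C\Lambda\|\varphi\|_{C^{1,1}}$ uniformly in $x \in B_R$ because the second-order Taylor expansion of $\varphi$ tames the singularity at $x=y$; likewise the contribution from $y \in B_R^c$ is bounded since $\varphi$ vanishes there and $|x-y|$ is bounded below on $\operatorname{supp}\varphi \times B_R^c$. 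Therefore $\psi \in L^\infty(B_R)$, and the defining property of $G^R$ with this choice of $\psi$ gives $\int_{\mathbb{R}^n} G^R(x,y)\psi(x)\,dx = \varphi(y)$. It then remains to identify the left-hand side with $\mathcal{E}(G^R(\cdot,y), \varphi)$; by the symmetry $k(x,y)=k(y,x)$ and Fubini (justified once we know both $G^R(\cdot,y)$ and the relevant integrands are integrable—see below), $\int G^R(x,y)\psi(x)\,dx = \mathcal{E}(G^R(\cdot,y), \varphi)$, which is exactly the claimed identity.

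For the regularity statement, I would invoke Theorem~\ref{thm-Green}: the upper bound $G^R(x,y) \leq C|x-y|^{2s-n}$ combined with $G^R(\cdot,y)=0$ on $B_R^c$ shows directly that $G^R(\cdot,y) \in L^1_{2s}(\mathbb{R}^n)$, since $\int_{\mathbb{R}^n}\frac{|G^R(x,y)|}{(1+|x|)^{n+2s}}\,dx \leq C\int_{B_R}|x-y|^{2s-n}\,dx < \infty$ as $2s < n$. For the $W^{s,2}_{\mathrm{loc}}(B_R\setminus\{y\})$ membership, the standard route is a Caccioppoli estimate: $G^R(\cdot,y)$ is a nonnegative bounded (away from $y$) function satisfying the identity just proved, so on any ball $B_\rho(x_0) \Subset B_R\setminus\{y\}$ one tests with $\eta^2 G^R(\cdot,y)$ for a suitable cutoff $\eta$ and absorbs, obtaining a finite Gagliardo seminorm on a slightly smaller ball; the tail term appearing in the Caccioppoli estimate is finite precisely because $G^R(\cdot,y) \in L^1_{2s}(\mathbb{R}^n)$. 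Alternatively one may cite the local boundedness and regularity theory for weak solutions once the solution property is known, but since the solution property is part of what we are proving, the self-contained Caccioppoli argument is cleaner.

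Finally, the ``in particular'' clause: once we know $G^R(\cdot,y) \in W^{s,2}_{\mathrm{loc}}(B_R\setminus\{y\}) \cap L^1_{2s}(\mathbb{R}^n)$ and that $\mathcal{E}(G^R(\cdot,y),\varphi)=\varphi(y)=0$ for all $\varphi \in C^\infty_c(B_R\setminus\{y\})$ (a special case of the identity above, since such $\varphi$ vanish near $y$), the function $G^R(\cdot,y)$ is by Definition~\ref{def-harmonic} a weak solution of $\mathcal{L}G^R(\cdot,y)=0$ in $B_R\setminus\{y\}$. I expect the main obstacle to be the justification that $\psi \in L^\infty(B_R)$ and the attendant Fubini argument: one must confirm that $(x,y') \mapsto (G^R(x,y)-G^R(y',y))(\varphi(x)-\varphi(y'))k(x,y')$ is absolutely integrable on $\mathbb{R}^n\times\mathbb{R}^n$ before rearranging, which follows by splitting into the region near the diagonal (handled by $|\varphi(x)-\varphi(y')| \lesssim |x-y'|$ and the already-established $W^{s,2}_{\mathrm{loc}}$ bound on a neighborhood of $\operatorname{supp}\varphi$, away from $y$) and the far region (handled by the $L^1_{2s}$ bound on $G^R(\cdot,y)$ and boundedness of $\varphi$). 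This is essentially the same splitting used in the proof of Proposition~\ref{prop-test}, so it can be carried out along those lines.
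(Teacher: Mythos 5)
The first and central step of your argument has a genuine gap. You propose to realize the functional $v \mapsto \mathcal{E}(\varphi,v)$ by the $L^\infty$ function $\psi(x) = 2\,\mathrm{p.v.}\!\int (\varphi(x)-\varphi(y'))k(x,y')\,\mathrm{d}y' = \mathcal{L}\varphi(x)$ and claim the principal value is controlled by $\|\varphi\|_{C^{1,1}}$ via a second-order Taylor expansion. That expansion only tames the singularity when the kernel is pointwise symmetric about $x$, i.e.\ $k(x,x+h)=k(x,x-h)$, because the first-order term $\mathrm{p.v.}\!\int_{|h|<\epsilon} D\varphi(x)\cdot h\, k(x,x+h)\,\mathrm{d}h$ must cancel. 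The paper only assumes $k(x,y)=k(y,x)$ with measurable $k$, so for $s\geq 1/2$ this first-order term is not absolutely integrable and the principal value need not exist; $\mathcal{L}\varphi$ is simply not well defined. This is exactly the obstruction the paper flags repeatedly (``$\mathcal{L}\varphi$ is not well defined in general even for nice test functions $\varphi\in C^\infty_c(\mathbb{R}^n)$''), and it is why the definition of $G^R$ cannot be invoked with your choice of $\psi$: a generic $\varphi\in C^\infty_c(B_R)$ does not arise as the solution associated with any $\psi\in L^\infty(B_R)$, since such solutions are in general only H\"older continuous.

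The paper's proof takes a different and safer route: it goes back to the construction of $G^R$ in Kassmann--Kim--Lee as the limit of regularized Green functions $G^R_\rho(\cdot,y)\in V^{s,2}_0(B_R)$, which satisfy $\mathcal{E}(G^R_\rho(\cdot,y),\varphi)=\fint_{B_\rho(y)}\varphi$ for all $\varphi\in C^\infty_c(B_R)$ by construction (here the Green function is the solution and $\varphi$ the test function, so no pointwise evaluation of $\mathcal{L}\varphi$ is ever needed). Using the uniform bound of $\{G^R_\rho(\cdot,y)\}_\rho$ in $W^{\sigma,q}(\mathbb{R}^n)$ for $\sigma<s$, $q<n/(n-s)$, one extracts a weakly convergent subsequence and passes to the limit via the continuity estimate $|\mathcal{E}(u,\varphi)|\leq \Lambda[u]_{W^{\sigma,q}}[\varphi]_{W^{2s-\sigma,q/(q-1)}}$. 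Your treatment of the $L^1_{2s}$ membership via the upper bound in Theorem~\ref{thm-Green} is fine, but the $W^{s,2}_{\mathrm{loc}}(B_R\setminus\{y\})$ claim also needs more care than a direct Caccioppoli estimate applied to $G^R(\cdot,y)$: to test the equation with $\eta^2 G^R(\cdot,y)$ one must already know the function lies in the energy space, so the estimate has to be run uniformly on the approximations $G^R_\rho$ (or one cites an external result, as the paper does with Kim--Weidner). You would need to restructure the proof around the regularized Green functions for the argument to go through for all $s\in(0,1)$.
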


\begin{proof}
We recall from the proof of Theorem~1.3 in \cite{KKL23} that the Green function $G^R$ is obtained as the limit of $G_\rho^R$ as $\rho \to 0$, where $G_\rho^R \in V^{s, 2}_0(B_R)$ is the regularized Green function satisfying
\begin{equation}\label{eq-reg-Green}
\mathcal{E}(G_\rho^R(\cdot, y), \varphi)=\fint_{B_\rho(y)} \varphi \,\mathrm{d}x \quad\text{for all }\varphi \in C^\infty_c(B_R).
\end{equation}
Note that the sequence $\{G_\rho^R(\cdot, y)\}_\rho$ is uniformly bounded in $W^{\sigma, q}(\mathbb{R}^n)$ for any $\sigma \in (0,s)$ and $q \in [1, n/(n-s))$; see \cite[p.~16973]{KKL23}.

Now, we take $\sigma \in (0, s)$ sufficiently close to $s$ so that $2s-\sigma<1$, and fix any exponent $q \in (1, n/(n-s))$. Then there exists a sequence $\rho_j \to 0$ such that $G_{\rho_j}^R(\cdot, y)$ converges weakly to $G^R(\cdot, y)$ in $W^{\sigma, q}(\mathbb{R}^n)$ as $j \to \infty$. Note that, for any $\varphi \in C^\infty_c(B_R)$, the map $u \mapsto \mathcal{E}(u, \varphi)$ is a continuous functional on $W^{\sigma, q}(\mathbb{R}^n)$ since
\begin{align*}
|\mathcal{E}(u, \varphi)|
&\leq \Lambda \int_{\mathbb{R}^n} \int_{\mathbb{R}^n} \frac{|u(x)-u(y)||\varphi(x)-\varphi(y)|}{|x-y|^{n+2s}} \,\mathrm{d}y \,\mathrm{d}x \\
&\leq \Lambda [u]_{W^{\sigma, q}(\mathbb{R}^n)} [\varphi]_{W^{2s-\sigma, q/(q-1)}(\mathbb{R}^n)}.
\end{align*}
Therefore, the left-hand side of \eqref{eq-reg-Green} (with $\rho$ replaced by $\rho_j$) converges to $\mathcal{E}(G^R(\cdot, y), \varphi)$ as $j \to \infty$. It is clear that the right-hand side of \eqref{eq-reg-Green} (with $\rho$ replaced by $\rho_j$) converges to $\varphi(0)$ as $j \to \infty$.

The second assertion follows from Lemma~9.2 in Kim--Weidner~\cite{KW24a}.
\end{proof}

\begin{proof}[Proof of Theorem~\ref{thm-FS}]
Let $G^R$ be the Green function for $\mathcal{L}$ in $B_R=B_R(0)$ with the pointwise estimates \eqref{eq-Green-est}. Then, Lemma~\ref{lem-Green-weak} shows that $G^R(\cdot, 0)$ is a weak solution of $\mathcal{L}G^R(\cdot, 0)=0$ in $B_R \setminus \{0\}$. Thus, it follows from the uniform H\"older estimate (see, for instance, Di Castro--Kuusi--Palatucci~\cite[Theorem~1.2]{DCKP16} or Cozzi~\cite[Theorem~6.4]{Coz17}) and the pointwise upper bound in \eqref{eq-Green-est} that
\begin{align*}
[G^R(\cdot, 0)]_{C^\alpha(\overline{B}_r(x_0))}
&\leq \frac{C}{r^\alpha} \left( \|G^R(\cdot, 0)\|_{L^\infty(B_{2r}(x_0))} + \mathrm{Tail}(G^R(\cdot, 0); x_0, r) \right) \\
&\leq \frac{C}{r^\alpha} \left( r^{2s-n} + r^{2s} \int_{\mathbb{R}^n \setminus B_r(x_0)} \frac{r^{2s-n}}{|x-x_0|^{n+2s}} \,\mathrm{d}x \right) \\
&\leq C r^{2s-n-\alpha},
\end{align*}
provided that $B_{4r}(x_0) \subset B_R \setminus \{0\}$, where $C=C(n, s, \Lambda)>0$. Applying the Arzel\`a--Ascoli theorem together with a standard diagonal argument shows that there exist a sequence $R_j \to \infty$ and a function $\Phi \in C(\mathbb{R}^n \setminus \{0\})$ such that $G^{R_j}(\cdot, 0) \to \Phi$ uniformly on every compact subset of $\mathbb{R}^n \setminus \{0\}$ as $j \to \infty$. Moreover, it follows from \eqref{eq-Green-est} that
\begin{align}
\Phi(x) &\leq C |x|^{2s-n} \quad\text{for all }x \in \mathbb{R}^n, \nonumber\\
\Phi(x) &\geq c |x|^{2s-n} \quad\text{for all }x\text{ in a neighborhood of the origin}, \label{eq-Gamma-lower}
\end{align}
with the same constants $C\geq c>0$, which gives \eqref{eq-FS-bounds}. As a consequence, we also obtain \eqref{eq-FS3}. Note that $|x|^{2s-n} \in L^1_{2s}(\mathbb{R}^n)$.

Now, it follows from the convergence theorem in Korvenp\"a\"a--Kuusi--Palatucci~\cite[Corollary~5]{KKP17} that $\Phi$ is $\mathcal{L}$-harmonic in $\mathbb{R}^n \setminus \{0\}$. By Lemma~\ref{lem-superharmonic}, it is also $\mathcal{L}$-superharmonic in $\mathbb{R}^n$. Thus, \eqref{eq-FS2} holds. Finally, Theorem~\ref{thm-weak-Bocher} shows that there exists $a \geq 0$ such that
\begin{equation*}
\int_{\mathbb{R}^n} \int_{\mathbb{R}^n} (\Phi(x)-\Phi(y))(\varphi(x)-\varphi(y)) k(x, y) \,\mathrm{d}y \,\mathrm{d}x= a\varphi(0)
\end{equation*}
for any $\varphi \in C^\infty_c(\mathbb{R}^n)$. Since $\Phi$ has a singularity at the origin by \eqref{eq-Gamma-lower}, it follows that $a>0$. We may assume that $a=1$ by considering $\Phi/a$ instead of $\Phi$, if necessary. This shows \eqref{eq-FS1}. The uniqueness will follow from Theorem~\ref{thm-sing-Liouv-L}.
\end{proof}

\section{B\^ocher type theorem}\label{sec-Bocher}

In this section, we prove the B\^ocher type Theorem~\ref{thm-Bocher} by refining the isolated singularity Theorem~\ref{thm-iso-sing}. A key ingredient in the proof is the localized maximum principle stated in Theorem~\ref{thm-loc-max} (or the localized comparison principle in Corollary~\ref{cor-loc-comp}). Before proving Theorem~\ref{thm-loc-max}, we first display the standard maximum principle in the nonlocal context. See, for instance, Silvestre~\cite[Proposition~2.21]{Sil07}, Lindgren--Lindqvist~\cite[Lemma~9]{LL14}, Korvenp\"a\"a--Kuusi--Palatucci~\cite[Lemma~6]{KKP17}, Fern\'andez-Real--Ros-Oton~\cite[Lemma~2.3.3 and Corollary 2.3.4]{FRRO24}, Bj\"orn--Bj\"orn--Kim~\cite[Corollary~3.6]{BBK24}, or Kim--Lee~\cite[Lemma~5.2]{KL23} for its proof.

\begin{theorem}[Maximum principle]\label{thm-MP}
Let $\Omega, \Omega' \subset \mathbb{R}^n$ be nonempty open sets such that $\Omega \Subset \Omega'$. Let $u \in W^{s, 2}(\Omega')$ be a weak supersolution of $\mathcal{L}u=0$ in $\Omega$ such that $u_- \in V^{s, 2}_0(\Omega)$. Then $u \geq 0$ a.e.\ in $\Omega$.
\end{theorem}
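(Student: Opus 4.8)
The plan is the standard variational maximum principle argument: test the supersolution inequality with the negative part $u_-$ and use a pointwise monotonicity inequality to force $u_-$ to vanish. First I would confirm that $u_-$ is an admissible test function. Since $u$ is a weak supersolution of $\mathcal{L}u=0$ in $\Omega$, Definition~\ref{def-harmonic} already guarantees $u\in W^{s,2}_{\mathrm{loc}}(\Omega)\cap L^1_{2s}(\mathbb{R}^n)$, so together with the hypotheses $u\in W^{s,2}(\Omega')$ and $\Omega\Subset\Omega'$ the function $u$ meets the requirements of Proposition~\ref{prop-test} with $f\equiv 0$. Since $0\le u_-\in V^{s,2}_0(\Omega)$ by assumption, that proposition yields
\begin{equation*}
\mathcal{E}(u,u_-)\ge 0,
\end{equation*}
the defining double integral being absolutely convergent, as seen from the estimates in the proof of Proposition~\ref{prop-test}.

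The second ingredient is the elementary pointwise inequality behind every such argument. Since $t\mapsto t_+$ is nondecreasing and $t\mapsto t_-$ is nonincreasing, the factors $a_+-b_+$ and $a_--b_-$ always have opposite signs (or one of them vanishes), hence $(a_+-b_+)(a_--b_-)\le 0$ for all $a,b\in\mathbb{R}$; writing $a-b=(a_+-b_+)-(a_--b_-)$ this gives
\begin{equation*}
(a-b)(a_--b_-)=(a_+-b_+)(a_--b_-)-(a_--b_-)^2\le -(a_--b_-)^2 .
\end{equation*}
Inserting $a=u(x)$, $b=u(y)$, multiplying by $k(x,y)\ge 0$, integrating over $\mathbb{R}^n\times\mathbb{R}^n$, and using the ellipticity bound $k\le \Lambda|x-y|^{-n-2s}$ together with $u_-\in V^{s,2}_0(\Omega)\subset W^{s,2}(\mathbb{R}^n)$, I would obtain
\begin{equation*}
0\le \mathcal{E}(u,u_-)\le -\int_{\mathbb{R}^n}\int_{\mathbb{R}^n}\bigl(u_-(x)-u_-(y)\bigr)^2 k(x,y)\,\mathrm{d}y\,\mathrm{d}x\le -\Lambda^{-1}[u_-]_{W^{s,2}(\mathbb{R}^n)}^2\le 0 .
\end{equation*}
Thus $[u_-]_{W^{s,2}(\mathbb{R}^n)}=0$, so $u_-$ equals a constant a.e.\ on $\mathbb{R}^n$; since $\Omega\Subset\Omega'$ forces $\Omega$ to be bounded, its complement has positive measure and $u_-=0$ a.e.\ there, so that constant is $0$. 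Hence $u_-\equiv 0$ a.e., i.e.\ $u\ge 0$ a.e.\ in $\Omega$ (indeed in $\mathbb{R}^n$).

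The only genuinely delicate point is the admissibility step — ensuring that $u_-$ may legitimately be used as a test function and that $\mathcal{E}(u,u_-)$ is a well-defined, absolutely convergent integral so that the pointwise inequality can be integrated termwise. Both facts are exactly what Proposition~\ref{prop-test} delivers, granted the integrability $u\in W^{s,2}(\Omega')\cap L^1_{2s}(\mathbb{R}^n)$ built into the notion of a weak supersolution; the remaining steps are purely elementary.
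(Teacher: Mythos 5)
Your argument is correct and is essentially the standard one: the paper does not prove Theorem~\ref{thm-MP} itself but defers to the cited references, which all run exactly this computation (test the supersolution inequality with $\varphi=u_-$, admissible by Proposition~\ref{prop-test} since $u\in W^{s,2}(\Omega')\cap L^1_{2s}(\mathbb{R}^n)$ and $0\le u_-\in V^{s,2}_0(\Omega)$, then exploit the sign structure of $(u(x)-u(y))(u_-(x)-u_-(y))$); the same device also appears in the paper's own proof of Lemma~\ref{lem-MP-nonhom}. One cosmetic slip: the final estimate $-\iint (u_-(x)-u_-(y))^2k\,\mathrm{d}y\,\mathrm{d}x\le -\Lambda^{-1}[u_-]_{W^{s,2}(\mathbb{R}^n)}^2$ uses the \emph{lower} ellipticity bound $k\ge \Lambda^{-1}|x-y|^{-n-2s}$, not the upper one you quote (the upper bound is what guarantees finiteness of that integral for $u_-\in W^{s,2}(\mathbb{R}^n)$); with that attribution corrected the proof is complete.
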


Recall that, in the local case, a standard proof of B\^ocher type theorem relies on the (extended form of) comparison principle between a singular harmonic function and the fundamental solution on shrinking annuli; see, for instance, the proof of Theorem~5 in Gilbarg--Serrin~\cite{GS56}. This method requires comparing the two functions on the boundary of each annulus.

In the nonlocal setting, however, the maximum principle in Theorem~\ref{thm-MP} requires pointwise control of the functions in the complement of the domain (note that $u_- \in V^{s, 2}_0(\Omega)$ implies $u \geq 0$ a.e.\ in $\Omega^c$). This condition is too restrictive for our purpose, since we wish to compare a singular solution with the fundamental solution without any a priori information about which one dominates the other outside the domain. For this reason, we localize Theorem~\ref{thm-MP} to obtain Theorem~\ref{thm-loc-max}.

We first establish the following lemma, which captures the effect of a non-homogeneous term in the maximum principle for nonlocal operators.

\begin{lemma}\label{lem-MP-nonhom}
Let $0<r<R$ and let $\Omega' \subset \mathbb{R}^n$ be an open set such that $B_R \setminus \overline{B}_r \Subset \Omega'$. Let $f \in L^\infty(B_R \setminus \overline{B}_r)$ and let $u \in W^{s, 2}(\Omega')$ be a weak supersolution of $\mathcal{L}u=f$ in $B_R \setminus \overline{B}_r$ such that $u_- \in V^{s, 2}_0(B_R \setminus \overline{B}_r)$. Then there exists a constant $C=C(n, s, \Lambda)>0$ such that
\begin{equation*}
    u \geq -CR^{2s} \|f\|_{L^\infty(B_R \setminus \overline{B}_r)}
\end{equation*}
a.e.\ in $B_R \setminus \overline{B}_r$.
\end{lemma}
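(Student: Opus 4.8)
The plan is to reduce the non-homogeneous case to the homogeneous maximum principle (Theorem~\ref{thm-MP}) by subtracting an explicit barrier. Set $M \coloneqq \|f\|_{L^\infty(B_R \setminus \overline{B}_r)}$ and look for a function $w$ on $\mathbb{R}^n$ such that $\mathcal{L}w \leq -M$ (as a weak supersolution of $\mathcal{L}w = -M$) in $B_R \setminus \overline{B}_r$, with $w$ bounded and comparable to $R^{2s}M$. The natural candidate is a quadratic-type bump $w(x) \coloneqq C_0 M (R^2 - |x|^2)_+$ or, more robustly for the nonlocal operator, $w(x) \coloneqq C_0 M (2R)^{2s}$ times a smooth cutoff that equals $1$ on $B_R$ and vanishes outside $B_{2R}$; I would compute $\mathcal{L}w$ on $B_R \setminus \overline{B}_r$ and check that with $C_0 = C_0(n,s,\Lambda)$ large enough one has $\mathcal{L}w \leq -M$ there. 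Here the ellipticity bound $k \leq \Lambda |x-y|^{-n-2s}$ controls $\mathcal{L}w$ from above in terms of $\Lambda$, $\|D^2 w\|_\infty$ near $B_R$, and the $L^\infty$-plus-tail norm of $w$, all of which scale like $M R^{-2s} \cdot R^{2s} = M$ after the choice of amplitude; this is the step that pins down the constant $C = C_0$.

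Given such a $w$, consider $\tilde u \coloneqq u + w$. It is a weak supersolution of $\mathcal{L}\tilde u \geq f - M \geq 0$... wait—more precisely, $\mathcal{E}(\tilde u, \varphi) = \mathcal{E}(u,\varphi) + \mathcal{E}(w,\varphi) \geq \int f\varphi - \int M \varphi \geq 0$ for nonnegative $\varphi \in V^{s,2}_0(B_R \setminus \overline{B}_r)$ with compact support, using Proposition~\ref{prop-test} to justify testing against $V^{s,2}_0$ functions (note $u \in W^{s,2}(\Omega')$ and $w \in C^\infty_c(\mathbb{R}^n)$, so $\tilde u \in W^{s,2}(\Omega')$, and $f - M \leq 0 = $ the effective right-hand side). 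Moreover $\tilde u_- \leq u_-$ a.e., because $w \geq 0$ everywhere; since $u_- \in V^{s,2}_0(B_R \setminus \overline{B}_r)$ and $V^{s,2}_0$ is closed under the lattice operation of truncation against a nonnegative perturbation, one gets $\tilde u_- \in V^{s,2}_0(B_R \setminus \overline{B}_r)$ as well. (If this lattice closure is not immediate, one can instead note $\tilde u \geq u \geq -u_-$ with $u = 0$ a.e.\ outside $B_R \setminus \overline{B}_r$ except on the support of $u_-$, and argue directly; the cleanest route is to observe $\tilde u_- = (u+w)_- \leq u_-$ pointwise and that $0 \leq \tilde u_- \leq u_- \in V^{s,2}_0$ forces $\tilde u_- \in V^{s,2}_0$ by, e.g., Lemma~\ref{lem-test} after checking $\tilde u_- \in W^{s,2}$ vanishes outside the annulus.)

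Then Theorem~\ref{thm-MP}, applied on $\Omega = B_R \setminus \overline{B}_r \Subset \Omega'$ to the supersolution $\tilde u$, yields $\tilde u \geq 0$ a.e.\ in $B_R \setminus \overline{B}_r$, i.e.
\begin{equation*}
u \geq -w \geq -C_0 M (2R)^{2s} \eqqcolon -C R^{2s} \|f\|_{L^\infty(B_R \setminus \overline{B}_r)}
\end{equation*}
a.e.\ in $B_R \setminus \overline{B}_r$, with $C = C(n,s,\Lambda)$, which is the claim. The main obstacle I anticipate is the barrier construction: one must exhibit a genuinely nonlocal supersolution of $\mathcal{L}w = -M$ on the annulus whose $L^\infty$-norm is controlled by $R^{2s}M$ with a constant independent of $r$ (so the inner boundary $\partial B_r$ must not spoil the estimate) and independent of $R$ after scaling; the quadratic bump $C_0 M (R^2-|x|^2)_+$ has $\mathcal{L}[(R^2-|\cdot|^2)_+]$ bounded below by a negative constant times $R^{2s-2}\cdot R^2 \cdot$(something)—one checks $\mathcal{L}[(R^2-|\cdot|^2)_+](x) \geq -c(n,s,\Lambda) R^{2s}$ fails to be uniform, so the safer choice is the smooth-cutoff barrier of amplitude $\sim R^{2s}M$, for which $\mathcal{L}w$ on $B_R\setminus\overline B_r$ is bounded in absolute value by $c(n,s,\Lambda)R^{-2s}\cdot R^{2s}M = c M$ using only the $L^\infty$ and tail bounds on $w$ together with the $C^2$ bound near $\partial B_R$; fixing the amplitude to dominate $M$ closes the argument. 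Once the barrier is in place the rest is routine.
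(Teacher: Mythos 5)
Your overall strategy --- subtract an explicit barrier and invoke the homogeneous maximum principle (Theorem~\ref{thm-MP}) --- is viable and genuinely different from the paper's proof. The paper instead argues directly: it sets $G=\{u<-CC_0\}$ with $C_0=\|f\|_{L^\infty}$, tests the equation with $\varphi=(u+CC_0)_-\in V^{s,2}_0(B_R\setminus\overline{B}_r)$, and extracts the coercive far-field term $\int_G\varphi(x)\int_{\mathbb{R}^n\setminus B_{2R}(x)}|x-y|^{-n-2s}\,\mathrm{d}y\,\mathrm{d}x\gtrsim R^{-2s}\int_G\varphi$, which dominates $C_0\int_G\varphi$ once $C\gtrsim R^{2s}$, forcing $|G|=0$. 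Both arguments exploit the same mechanism --- the operator carries a zeroth-order term of size $R^{-2s}$ from $\int_{B_{2R}(x)^c}k(x,y)\,\mathrm{d}y$ --- but the paper's version needs no auxiliary function.

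Two points in your write-up are genuine problems. First, the signs are backwards: you need $w\ge0$ with $\mathcal{E}(w,\varphi)\ge +M\int\varphi$ for admissible $\varphi\ge0$ (a weak supersolution of $\mathcal{L}w=+M$), so that $\mathcal{E}(u+w,\varphi)\ge\int(f+M)\varphi\ge0$; as written you ask for $\mathcal{L}w\le -M$ and then claim $\int f\varphi-\int M\varphi\ge0$, which fails when $f$ is negative, and a $w$ with $\mathcal{L}w\le-M$ would push $u$ the wrong way. Second, and more substantively, your proposed verification of the barrier property cannot work: for measurable kernels the pointwise value $\mathcal{L}w(x)$ is in general undefined even for $w\in C^\infty_c(\mathbb{R}^n)$ when $s\ge 1/2$ (the paper emphasizes this), and in any case estimates on $|\mathcal{L}w|$ via $\|D^2w\|_\infty$ and tail norms go in the wrong direction --- you need a one-sided lower bound, not an upper bound on the absolute value. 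The repair is to work in the weak formulation and use that the bump $w=A\eta$, with $0\le\eta\le1$ and $\eta\equiv1$ on $B_R$, attains its maximum on all of $B_R$: for $0\le\varphi\in C^\infty_c(B_R\setminus\overline{B}_r)$ the only nonvanishing contributions to $\mathcal{E}(w,\varphi)$ come (up to symmetry) from $x\in\supp\varphi$ and $y\in\{\eta<1\}\subset B_R^c$, where $(w(x)-w(y))(\varphi(x)-\varphi(y))=A(1-\eta(y))\varphi(x)\ge0$; the portion with $y\in B_{2R}^c$ already yields $\mathcal{E}(w,\varphi)\ge 2A\Lambda^{-1}\tfrac{|\mathbb{S}^{n-1}|}{2s}(3R)^{-2s}\int\varphi$, so $A=C(n,s,\Lambda)R^{2s}M$ suffices. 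With these corrections (and your observation that $(u+w)_-=0$ a.e.\ outside the annulus, so Lemma~\ref{lem-test-V} places it in $V^{s,2}_0(B_R\setminus\overline{B}_r)$), your argument closes and gives the stated constant.
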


\begin{proof}
Let $C_0 = \|f\|_{L^\infty(B_R \setminus \overline{B}_r)}$. It is enough to prove that the set
\begin{equation*}
G\coloneqq\{u<-CC_0\}
\end{equation*}
has measure zero for sufficiently large constant $C>0$. We define the function
\begin{equation*}
\varphi=(u+CC_0)_-\in W^{s, 2}(\Omega').
\end{equation*}
Since $\varphi=0$ a.e.\ outside $B_R \setminus \overline{B}_r$, we have from Lemma~\ref{lem-test-V} applied with $\Omega=B_R \setminus \overline{B}_r$ that $\varphi \in V^{s, 2}_0(B_R \setminus \overline{B}_r)$. Thus, we can use $\varphi$ as a test function by Proposition~\ref{prop-test}. Since $\varphi=-u-CC_0>0$ in $G$ and $\varphi=0$ on $G^c$, we have that
\begin{align*}
-C_0 \int_G \varphi \,\mathrm{d}x
&\leq \int_\Omega f\varphi \,\mathrm{d}x \leq \mathcal{E}(u, \varphi) \\
&\leq - \Lambda^{-1}\int_G \int_G \frac{|u(x)-u(y)|^2}{|x-y|^{n+2s}} \,\mathrm{d}y \,\mathrm{d}x + 2\Lambda^{-1}\int_G \int_{G^c} \frac{(u(x)-u(y))\varphi(x)}{|x-y|^{n+2s}} \,\mathrm{d}y \,\mathrm{d}x.
\end{align*}
Since $u(x)-u(y) \leq 0$ for $x \in G$ and $y \in G^c$, we obtain that
\begin{align*}
-C_0 \int_G \varphi \,\mathrm{d}x
&\leq 2\Lambda^{-1}\int_G \int_{\mathbb{R}^n \setminus B_{2R}(x)} \frac{(u(x)-u(y))\varphi(x)}{|x-y|^{n+2s}} \,\mathrm{d}y \,\mathrm{d}x \\
&\leq -2CC_0 \Lambda^{-1} \int_G \varphi(x) \int_{\mathbb{R}^n \setminus B_{2R}(x)} \frac{1}{|x-y|^{n+2s}} \,\mathrm{d}y \,\mathrm{d}x \\
&= - \frac{CC_0 |\mathbb{S}^{n-1}|}{s \Lambda (2R)^{2s}} \int_G \varphi \,\mathrm{d}x.
\end{align*}
Taking $C>s\Lambda(2R)^{2s}/|\mathbb{S}^{n-1}|$ yields $|G|=0$.
\end{proof}

We now prove the localized maximum principle in Theorem~\ref{thm-loc-max}. Corollary~\ref{cor-loc-comp} is a direct consequence of Theorem~\ref{thm-loc-max}.

\begin{proof}[Proof of Theorem~\ref{thm-loc-max}]
Let $\eta \in C^\infty_c(B_{\theta R})$ be such that $0 \leq \eta \leq 1$ in $\mathbb{R}^n$ and $\eta \equiv 1$ on $B_{(\theta+1)R/2}$. Then for any nonnegative function $\varphi \in C^\infty_c(B_R \setminus \overline{B}_r)$, we have
\begin{align*}
\mathcal{E}(u\eta, \varphi)
&\geq - \mathcal{E}(u(1-\eta), \varphi) \\
&= - \int_{B_R \setminus \overline{B}_r} \int_{B_R \setminus \overline{B}_r} (u(x)(1-\eta(x)) - u(y)(1-\eta(y)))(\varphi(x) -\varphi(y)) k(x, y)\,\mathrm{d}y \,\mathrm{d}x \\
&\quad - 2 \int_{B_R \setminus \overline{B}_r} \int_{(B_R \setminus \overline{B}_r)^c} (u(x)(1-\eta(x)) - u(y)(1-\eta(y))) \varphi(x) k(x, y) \,\mathrm{d}y \,\mathrm{d}x \\
&= 2 \int_{B_R \setminus \overline{B}_r} \int_{B_{(\theta+1)R/2}^c} u(y)(1-\eta(y))\varphi(x) k(x, y) \,\mathrm{d}y \,\mathrm{d}x \\
&\geq -2\Lambda \int_{B_R \setminus \overline{B}_r} \varphi(x) \int_{B_{\theta R}^c} \frac{u_-(y)}{|x-y|^{n+2s}} \,\mathrm{d}y \,\mathrm{d}x.
\end{align*}
Since $|x-y| > \frac{\theta-1}{\theta}|y| \geq |y|/2$ for $x \in B_R \setminus \overline{B}_r$ and $y \in B_{\theta R}^c$, we obtain
\begin{equation*}
\mathcal{E}(u\eta, \varphi) \geq -2^{n+2s+1} \Lambda (\theta R)^{-2s} \mathrm{Tail}(u_-; 0, \theta R) \int_{B_R \setminus \overline{B}_r} \varphi \,\mathrm{d}x.
\end{equation*}
Thus, $u\eta$ is a weak supersolution of $\mathcal{L}(u\eta) = -2^{n+2s+1}\Lambda (\theta R)^{-2s} \mathrm{Tail}(u_-; 0, \theta R)$ in $B_R \setminus \overline{B}_r$.

On the other hand, since $u\eta \in W^{s, 2}(\Omega')$ and $u\eta \geq 0$ outside $B_R \setminus \overline{B}_r$, it follows from Lemma~\ref{lem-test-V} that $(u\eta)_- \in V^{s, 2}_0(B_R \setminus \overline{B}_r)$. Therefore, Lemma~\ref{lem-MP-nonhom} shows that
\begin{equation*}
u \geq -C \theta^{-2s} \mathrm{Tail}(u_-; 0, \theta R)
\end{equation*}
a.e.\ in $B_R \setminus \overline{B}_r$, for some constant $C=C(n, s, \Lambda)>0$.
\end{proof}

Finally, we provide the proof of Theorem~\ref{thm-Bocher}.

\begin{proof}[Proof of Theorem~\ref{thm-Bocher}]
We may assume that $u \geq 0$ in $\Omega$. We fix a ball $B_{2R} = B_{2R}(0) \Subset \Omega$ with $R>0$. Theorem~\ref{thm-iso-sing} shows that either $u$ has a removable singularity at the origin, which yields the desired conclusion with $a=0$, or
\begin{equation}\label{eq-asymp-u}
    u(x) \eqsim |x|^{2s-n} \quad\text{in }B_{r}
\end{equation}
for some $r \in (0,R)$. We thus assume for the remainder of the proof that $u$ has a non-removable singularity at the origin and that \eqref{eq-asymp-u} holds.

Recall from Theorem~\ref{thm-FS} that $\Phi \eqsim |x|^{2s-n}$ in a neighborhood of the origin, i.e.,
\begin{equation}\label{eq-asymp-Gamma}
    \Phi(x) \eqsim |x|^{2s-n} \quad\text{in }B_{r}
\end{equation}
for possibly a smaller radius $r$. We take a sufficiently large value $A>0$ such that
\begin{equation}\label{eq-A}
    u+C_0 \mathrm{Tail}(u_-, 0, 2R) < A \left( \frac{1}{2} \Phi - \Phi_R - C_0 \mathrm{Tail}(\Phi, 0, 2R) \right) \quad\text{in }B_{r},
\end{equation}
where $C_0 \coloneqq 2^{-2s}C>0$ for the constant $C>0$ given in Corollary~\ref{cor-loc-comp} and
\begin{equation*}
    \Phi_R \coloneqq \max_{\overline{B}_{2R} \setminus B_R} \Phi.
\end{equation*}
This is possible due to the asymptotic behavior \eqref{eq-asymp-u} and \eqref{eq-asymp-Gamma} and the finiteness of tail terms. Note that the right-hand side of \eqref{eq-A} can be made positive by taking smaller radius $r$ if necessary. Now, we let $a \geq 0$ be the largest value such that 
    \begin{equation}\label{eq-uG}
        u-a\Phi \geq -A \Phi_R -C_0 (\mathrm{Tail}(u_-; 0, 2R) + A\mathrm{Tail}(\Phi; 0, 2R)) \quad \text{in $B_{2R}$}.
    \end{equation}
    The existence of this value is guaranteed since the set of all values $a$ for which \eqref{eq-uG} holds is closed, bounded from above by $A/2$, and contains at least $a=0$.
    
    We define
    \begin{equation*}
        v= u-a\Phi.
    \end{equation*}
    Then, $v$ is $\mathcal{L}$-harmonic in $B_{2R} \setminus \{0\}$ and bounded from below in $B_{2R}$. In view of Theorem~\ref{thm-iso-sing} again, either $v$ has a removable singularity at the origin, or 
    \begin{equation*}
        v(x) \eqsim |x|^{2s-n}
    \end{equation*}
    in a neighborhood of the origin. In the latter case, there exist $r_0 \in (0, r)$ and $\varepsilon \in (0, A/2)$ such that
    \begin{equation*}
        v \geq \varepsilon \Phi - (a+\varepsilon)\Phi_R \quad\text{in }\overline{B}_{r_0}.
    \end{equation*}
    Since $u \geq 0$ in $\Omega$, we also have
    \begin{equation*}
        v \geq \varepsilon \Phi - (a+\varepsilon)\Phi_R \quad\text{in }\overline{B}_{2R} \setminus B_R.
    \end{equation*}
Thus, the localized comparison principle in Corollary~\ref{cor-loc-comp} with $r=r_0$ and $\theta=2$ shows that
    \begin{align*}
        u-(a+\varepsilon)\Phi
        &\geq - (a+\varepsilon)\Phi_R -C_0 \mathrm{Tail}( (u-(a+\varepsilon)\Phi)_-;0, 2R) \\
        &\geq - A \Phi_R -C_0 \mathrm{Tail}( (u-A \Phi)_-;0, 2R) \\
        &\geq - A \Phi_R -C_0 (\mathrm{Tail}(u_-; 0, 2R) + A\mathrm{Tail}(\Phi; 0, 2R))
    \end{align*}
    in $B_{2R}$. It contradicts to the maximality of $a$ in \eqref{eq-uG} and thus $v$ must have a removable singularity at the origin. This completes the proof.
\end{proof}

\section{Liouville theorem for singular solutions}\label{sec-liouville}

In this section, we finally prove the Liouville Theorem~\ref{thm-sing-Liouv-L} for singular solutions to the equation $\mathcal{L}u=0$. The key tools are the localized maximum principle, the isolated singularity theorem, the Harnack inequality on annuli, and the Liouville theorem for globally nonnegative solutions.

We begin with the Harnack inequality on annuli, which follows from the standard Harnack inequality and a covering argument.

\begin{lemma}[Harnack inequality on annulus]\label{lem-harnack}
Let $u$ be a weak solution of $\mathcal{L}u=0$ in $\mathbb{R}^n \setminus B_r$ such that $u \geq 0$ in $\mathbb{R}^n \setminus B_r$. Then for any $R \geq 2r$ and $\theta \geq 2$,
    \begin{equation*}
        \sup_{B_{\theta R} \setminus B_{R}}u \leq C\inf_{B_{\theta R} \setminus B_{R}} u +C\left( \frac{r}{\theta R} \right)^n \fint_{B_r} u_- \,\mathrm{d}x,
    \end{equation*}
    where $C=C(n, s, \Lambda) >0$.    
\end{lemma}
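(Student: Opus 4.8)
The plan is to deduce the inequality from the interior Harnack inequality for weak solutions of $\mathcal{L}u=0$ combined with a chaining argument along a finite covering of the annulus. The geometric input is that $B_{\theta R}\setminus B_R$ is connected, which is exactly where $n\ge 2$ enters: for $n=1$ the annulus splits into two intervals and no chaining is possible, which accounts for the dimensional restriction. First I would record the nonlocal Harnack inequality with tail term (see, e.g., Di Castro--Kuusi--Palatucci~\cite{DCKP14} or Cozzi~\cite{Coz17}): there is $C_0=C_0(n,s,\Lambda)>0$ such that if $w\ge 0$ is a weak solution of $\mathcal{L}w=0$ in $B_{2\rho}(z)$, then
\[
\sup_{B_\rho(z)}w\le C_0\inf_{B_\rho(z)}w+C_0\,\mathrm{Tail}(w_-;z,\rho).
\]

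Since $R\ge 2r$, any point $z$ with $|z|\ge R$ satisfies $\mathrm{dist}(z,\overline{B}_r)\ge |z|-r\ge |z|/2$, so a ball of the form $B_{2\rho}(z)$ with $\rho\eqsim |z|$ (for instance $\rho=|z|/8$) lies in $\mathbb{R}^n\setminus\overline{B}_r$; there $u$ is a nonnegative weak solution, so the Harnack inequality above applies to $u$ on such balls. I would then cover the closed annulus $\{R\le |x|\le \theta R\}$ by finitely many balls $B_{\rho_i}(z_i)$ of this type, with $\rho_i\eqsim |z_i|$, arranged so that they cover the annulus and their overlap graph is connected. A convenient way is to decompose the annulus into the dyadic pieces $\{2^kR\le |x|\le 2^{k+1}R\}$, for $0\le k\le \log_2\theta$, cover each by a number $N(n)$ of balls of radius comparable to $2^kR$, and link consecutive pieces through their overlap.

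Chaining the local estimates along overlapping balls, using $\inf_B u\le \sup_{B\cap B'}u\le \sup_{B'}u$ whenever $B\cap B'\ne\emptyset$, turns this into
\[
\sup_{B_{\theta R}\setminus B_R}u\le C\inf_{B_{\theta R}\setminus B_R}u+C\sum_i\mathrm{Tail}(u_-;z_i,\rho_i).
\]
For the tail terms I would use that $u\ge 0$ on $\mathbb{R}^n\setminus B_r$, so $u_-$ is supported in $B_r$, which is disjoint from each $B_{\rho_i}(z_i)$ and satisfies $|y-z_i|\ge |z_i|-r\gtrsim |z_i|$ for $y\in B_r$; hence
\[
\mathrm{Tail}(u_-;z_i,\rho_i)=\rho_i^{2s}\int_{B_r}\frac{u_-(y)}{|y-z_i|^{n+2s}}\,\mathrm{d}y\lesssim\frac{\rho_i^{2s}}{|z_i|^{n+2s}}\int_{B_r}u_-\lesssim\Big(\frac{r}{|z_i|}\Big)^{n}\fint_{B_r}u_-.
\]
Summing over the (boundedly many per scale) balls, using $|z_i|\in[R,\theta R]$, and keeping track of which ball contains the point where the supremum is attained then produces the asserted right-hand side $C\inf_{B_{\theta R}\setminus B_R}u+C(r/\theta R)^n\fint_{B_r}u_-$.

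The main obstacle is the geometric and combinatorial bookkeeping in the two previous paragraphs: one must organize the covering so that the number of balls entering a chain (hence the accumulated Harnack constant) and the accumulated tail contributions stay under control and collapse to precisely the right-hand side, with the constant depending only on $n$, $s$, and $\Lambda$. Once the covering is arranged, the interior Harnack inequality and the tail estimate are applied routinely, and connectedness of the annulus---equivalently the hypothesis $n\ge 2$---is what guarantees that a chain between any two points of the annulus exists.
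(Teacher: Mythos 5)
Your overall strategy is the same as the paper's: the interior nonlocal Harnack inequality with tail term on balls avoiding $B_r$, a finite covering of the annulus, chaining via connectedness (which is indeed exactly where $n\ge 2$ enters, as you say), and control of the tails through $\supp u_-\subset B_r$. The substantive difference is the covering, and this is where your argument has a genuine gap. The paper covers all of $B_{\theta R}\setminus B_R$ by balls of the \emph{single} radius $\theta R/8$ with centers $y_i$ satisfying $|y_i|>R+\theta R/16$. Since $\theta\ge 2$ gives $R\le \theta R/2$, the annulus sits inside $B_{\theta R}$ and a number $N=N(n)$ of such balls suffices; hence the chained constant is $C_0^{N(n)}=C(n,s,\Lambda)$, and every center is at distance $\gtrsim\theta R$ from $B_r$, which is precisely what produces the factor $(r/(\theta R))^{n}$ in front of $\fint_{B_r}u_-$.

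Your dyadic, multi-scale covering delivers neither feature. A chain joining the inner and outer parts of the annulus must pass through all $\approx\log_2\theta$ dyadic shells, so the accumulated Harnack constant is $C_0^{N(n)\log_2\theta}\eqsim\theta^{c}$, not a constant depending only on $n$, $s$, $\Lambda$; moreover, the balls at the innermost scale $|z_i|\eqsim R$ contribute tail terms of size $(r/R)^{n}\fint_{B_r}u_-$, which is $\theta^{n}$ times larger than the asserted $(r/(\theta R))^{n}\fint_{B_r}u_-$ (and these are further multiplied by the accumulated constants along the chain). You correctly flag ``keeping the accumulated constant under control'' as the main obstacle, but the covering you propose is exactly what makes it fail; the repair is to use a single radius comparable to $\theta R$ throughout, as above. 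Two mitigating remarks: the applications in Section~7 only invoke the lemma after $\theta$ has been fixed in terms of $n$, $s$, $\Lambda$, so a $\theta$-dependent constant would suffice there; and the claimed $\theta$-uniformity of the constant multiplying the infimum is in any case delicate, as one sees by testing the inequality on $u(x)=|x|^{2s-n}$ for $(-\Delta)^s$, for which $u_-\equiv 0$ while the ratio of the supremum to the infimum over $B_{\theta R}\setminus B_R$ equals $\theta^{\,n-2s}$.
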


\begin{proof}
    We can cover $B_{\theta R} \setminus B_{R}$ by finitely many open balls $\{B_{\theta R/8}(y_i)\}_{i=1}^N$ with $y_i \in B_{\theta R} \setminus B_{R+\theta R/16}$ so that $B_{\theta R/4}(y_i) \subset \mathbb{R}^n \setminus B_r$. Note that $N$ depends only on $n$. An application of the standard Harnack inequality (see, e.g.\ \cite[Theorem~1.1]{DCKP14}) in $B_{\theta R/8}(y_i)$ shows that
     \begin{equation*}
        \sup_{B_{\theta R/8}(y_i)} u \leq C\inf_{B_{\theta R/8}(y_i)}u +C\mathrm{Tail}(u_-; y_i, \theta R/8),
    \end{equation*}
    where the constant $C>0$ is independent of $i$. Since $u \geq 0$ in $\mathbb{R}^n \setminus B_r$ and 
    \begin{equation*}
        |y-y_i| \geq |y_i|-|y| > \frac{\theta R}{16}+ R-r \geq \frac{\theta R}{16}+\frac{R}{2} \quad \text{for all $y \in B_r$},
    \end{equation*}
    we obtain that
    \begin{equation*}
        \mathrm{Tail}(u_-; y_i, \theta R/8)=\left(\frac{\theta R}{8}\right)^{2s} \int_{B_r}\frac{u_-(y)}{|y-y_i|^{n+2s}}\,\mathrm{d}y \leq  \left(\frac{\theta R}{4}\right)^{2s}\left(\frac{16}{\theta R}\right)^{n+2s} \int_{B_r}u_-(y)\,\mathrm{d}y.
    \end{equation*}
Let $j \in \{1, \ldots, N\}$ be such that
    \begin{equation*}
       \inf_{B_{\theta R} \setminus B_{R}} u \geq \inf_{B_{\theta R/8}(y_j)}u.
    \end{equation*}
Since $n \geq 2$, the annulus $B_{\theta R} \setminus B_{R}$ is connected, and hence
    \begin{equation*}
        \sup_{B_{\theta R} \setminus B_{R}} u \leq \max_{1 \leq i \leq N} \sup_{B_{\theta R/8}(y_i)} u \leq C^N \inf_{B_{\theta R/8}(y_j)} u+C\left( \frac{r}{\theta R} \right)^n \fint_{B_r} u_- \,\mathrm{d}x,
    \end{equation*}
    as desired.
\end{proof}

In the following lemmas, we prove the boundedness of singular solutions in a neighborhood of infinity.

\begin{lemma}\label{lem-bdd}
Assume that $u$ is a weak solution of $\mathcal{L}u=0$ in $\mathbb{R}^n \setminus \overline{B}_{1/2}$ that is bounded from above or below in $\mathbb{R}^n$. Then $u$ is bounded in $\mathbb{R}^n \setminus B_2$.
\end{lemma}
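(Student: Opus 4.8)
\textbf{Proof proposal for Lemma~\ref{lem-bdd}.} The idea is that the lemma is essentially immediate once one has the annulus Harnack inequality (Lemma~\ref{lem-harnack}), whose decisive feature is that its constant depends only on $n,s,\Lambda$ and \emph{not} on the ratio $\theta$. First I would normalize. Since $u\mapsto -u$ preserves the class of weak solutions of $\mathcal{L}u=0$ (the form $\mathcal{E}$ is bilinear and $L^1_{2s}$, $W^{s,2}_{\mathrm{loc}}$ are closed under negation), we may assume $u$ is bounded from below in $\mathbb{R}^n$; and since adding a constant also preserves the class (constants satisfy $\mathcal{E}(c,\varphi)=0$ and lie in $L^1_{2s}(\mathbb{R}^n)$), we may assume $u\ge 0$ in all of $\mathbb{R}^n$. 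In particular $u\ge 0$ in $\mathbb{R}^n\setminus B_{1/2}$ and $u_-\equiv 0$, so Lemma~\ref{lem-harnack} applies with $r=1/2$ and the tail correction drops out.

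Next I would apply Lemma~\ref{lem-harnack} with $r=1/2$, $R=2$ (so $R\ge 2r$), and an arbitrary $\theta\ge 2$, which yields
\[
\sup_{B_{2\theta}\setminus B_2}u\;\le\;C\inf_{B_{2\theta}\setminus B_2}u\;\le\;C\,u(x_0),
\]
where $C=C(n,s,\Lambda)$ does not depend on $\theta$ and $x_0$ is any fixed point with $|x_0|=3$ (note $x_0\in B_{2\theta}\setminus B_2$ for every $\theta\ge 2$, and $u(x_0)<\infty$ since a weak solution has a continuous representative in $\mathbb{R}^n\setminus\overline{B}_{1/2}$ by the interior regularity theory). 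Since $\bigcup_{\theta\ge 2}(B_{2\theta}\setminus \overline{B}_2)=\mathbb{R}^n\setminus\overline{B}_2$, letting $\theta\to\infty$ gives $\sup_{\mathbb{R}^n\setminus B_2}u\le C\,u(x_0)<\infty$. Together with $u\ge 0$ this shows $u$ is bounded in $\mathbb{R}^n\setminus B_2$, and undoing the two normalizations proves the lemma.

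The only point that really matters is the $\theta$-uniformity encoded in Lemma~\ref{lem-harnack}: if one only had the interior Harnack inequality and chained it over a sequence of dyadic annuli marching out to infinity, the resulting constant would grow without bound as the number of annuli increases, and the limit $\theta\to\infty$ could not be taken. Granting Lemma~\ref{lem-harnack} as stated, I do not expect any serious obstacle; the remaining steps (the two normalizations and the monotone exhaustion of $\mathbb{R}^n\setminus\overline{B}_2$) are routine.
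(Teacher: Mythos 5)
There is a genuine gap, and it sits exactly where you located the crux of your argument: the $\theta$-uniformity of the constant in Lemma~\ref{lem-harnack} is not actually available. Although the lemma is \emph{stated} with $C=C(n,s,\Lambda)$, this cannot be correct for large $\theta$: the fundamental solution $\Phi\eqsim|x|^{2s-n}$ (for $(-\Delta)^s$, exactly $c|x|^{2s-n}$) is a positive weak solution in $\mathbb{R}^n\setminus\overline{B}_{1/2}$ with $u_-\equiv 0$, yet on $B_{\theta R}\setminus B_R$ one has $\sup u/\inf u\eqsim\theta^{n-2s}\to\infty$. Consistently with this, the covering in the paper's proof of Lemma~\ref{lem-harnack} uses balls $B_{\theta R/8}(y_i)$ with $|y_i|\ge R+\theta R/16$ and requires $B_{\theta R/4}(y_i)\subset\mathbb{R}^n\setminus B_r$, which forces $R-3\theta R/16\ge r$ and hence fails once $\theta\ge 16/3$; repairing the covering with balls of radius $\eqsim R$ gives $N=N(n,\theta)$ and a constant $C=C(n,s,\Lambda,\theta)$. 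With a $\theta$-dependent constant your passage to the limit $\theta\to\infty$ over the single annulus $B_{2\theta}\setminus B_2$ is vacuous --- precisely the failure mode you describe in your last paragraph. Note also that the paper itself never invokes Lemma~\ref{lem-harnack} with unbounded $\theta$: in the proof of Lemma~\ref{lem-bdd} the parameter $\theta$ is fixed once and for all in terms of $n,s,\Lambda$.

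The missing ingredient, which is the actual content of the paper's proof, is a mechanism for controlling $m(R)\coloneqq\inf_{B_{\theta R}\setminus B_R}u$ \emph{uniformly in $R$} by the value of $u$ on a fixed sphere. The paper does this by comparing $u$ with the barrier $m(R)(1-\overline{\Phi})$ on $B_R\setminus\overline{B}_1$, where $\overline{\Phi}$ is a normalization of the fundamental solution with $\min_{\overline{B}_1}\overline{\Phi}=2$ and $\overline{\Phi}\le 1/4$ outside a fixed $B_{R_0}$. The localized comparison principle (Corollary~\ref{cor-loc-comp}), with $\theta$ chosen large but fixed so that the tail error is at most $m(R)/4$, yields $u(x_0)\ge m(R)/2$ for $|x_0|=R_0$ and every $R\ge R_0$; only then is the annulus Harnack inequality (with this fixed $\theta$) used to convert $m(R)\lesssim M(R_0)$ into $M(R)\lesssim M(R_0)$. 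So your normalizations and the final exhaustion are fine, but the single-step Harnack argument cannot replace the comparison-principle step; without it the bound on $\sup_{B_{\theta R}\setminus B_R}u$ degenerates as the annuli march to infinity.
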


\begin{proof}
We may assume that $u>0$ in $\mathbb{R}^n$. We define
\begin{equation*}
\overline{\Phi} =2\Phi / \min_{\overline{B}_1}\Phi
\end{equation*}
so that $\min_{\overline{B}_1} \overline{\Phi}=2$. Since $\overline{\Phi} \to 0$ as $|x| \to \infty$, there exists $R_0 > 2$ such that
\begin{equation}\label{eq-Gamma-R0}
    \overline{\Phi} \leq 1/4 \quad\text{in }B_{R_0}^c.
\end{equation}
Set for $R > R_0$
\begin{equation*}
    M(R) \coloneqq \sup_{B_{\theta R} \setminus B_{R}} u \quad\text{and}\quad m(R) \coloneqq \inf_{B_{\theta R} \setminus B_{R}} u,
\end{equation*}
where $\theta>2$ to be determined later.

We observe that $u$ and $m(R)(1-\overline{\Phi})$ are $\mathcal{L}$-harmonic in $B_R \setminus \overline{B}_1$, and that
\begin{equation*}
    u \geq m(R)(1-\overline{\Phi}) \quad\text{in }(\overline{B}_{\theta R} \setminus B_{R}) \cup \overline{B}_1.
\end{equation*}
Thus, the localized comparison principle in Corollary~\ref{cor-loc-comp} with $r=1$ shows that
\begin{align*}
    u
    &\geq m(R)(1-\overline{\Phi}) - C \theta^{-2s} (\theta R)^{2s} \int_{B_{\theta R}^c} \frac{(u-m(R)(1-\overline{\Phi}))_-(y)}{|y|^{n+2s}} \,\mathrm{d}y \\
    &\geq m(R)(1-\overline{\Phi}) - C \theta^{-2s} (\theta R)^{2s} \int_{B_{\theta R}^c} \frac{m(R)}{|y|^{n+2s}} \,\mathrm{d}y \\
    &= m(R)\left(1-\overline{\Phi}-\frac{C|\mathbb{S}^{n-1}|}{2s} \theta^{-2s} \right)
\end{align*}
in $B_{R} \setminus \overline{B}_1$.

We take $\theta$ sufficiently large so that $C|\mathbb{S}^{n-1}|\theta^{-2s}/(2s) <1/4$. By using \eqref{eq-Gamma-R0}, we get for $x_0 \in \partial B_{R_0}$ and $R \geq R_0$
\begin{equation}\label{eq-MR0}
    M(R_0) \geq u(x_0) \geq \frac{1}{2}m(R).
\end{equation}
Since Lemma~\ref{lem-harnack} and \eqref{eq-MR0} show that
\begin{equation*}
M(R) \lesssim m(R) \lesssim M(R_0)
\end{equation*}
for any $R\geq R_0$, $M(R)$ is bounded from above for $R \geq R_0$, i.e.\ $u$ is bounded from above in $\mathbb{R}^n \setminus B_{R_0}$. Moreover, $u$ is locally bounded in $\mathbb{R}^n \setminus \overline{B}_1$ by the regularity theory; see, for instance, Theorem~1.1 in Di Castro--Kuusi--Palatucci~\cite{DCKP16}. Therefore, $u$ is bounded in $\mathbb{R}^n \setminus B_r$ for any $r>1$.
\end{proof}

If $u$ is bounded from one side (above or below) near the origin and bounded from the opposite side (below or above, respectively) at infinity, then we require $u$ to be a weak solution in $\mathbb{R}^n \setminus \{0\}$ in order to derive the same result.

\begin{lemma}\label{lem-bdd2}
Assume that $u$ is a weak solution of $\mathcal{L}u=0$ in $\mathbb{R}^n \setminus \{0\}$ that is bounded from one side in a neighborhood of $0$ and that $u$ is bounded from the opposite side in a neighborhood of infinity. Then $u$ is bounded in $\mathbb{R}^n \setminus B_2$.
\end{lemma}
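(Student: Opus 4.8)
The plan is to reduce this statement to the already-established Lemma~\ref{lem-bdd} by removing the singular part of $u$ at the origin. Since replacing $u$ by $-u$ swaps ``above'' and ``below'' on both sides while preserving $\mathcal{L}u=0$ and the conclusion, I may assume that $u$ is bounded from below in a ball $B_\rho=B_\rho(0)$ and bounded from above in a neighborhood of infinity, say $u\le N$ in $B_{R_0}^c$ for some $R_0>2$.

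First I would invoke the B\^ocher type Theorem~\ref{thm-Bocher} with $\Omega=B_\rho$: since $u$ is $\mathcal{L}$-harmonic in $B_\rho\setminus\{0\}$ and bounded from below there, it furnishes a constant $a\ge 0$ and an $\mathcal{L}$-harmonic function $v$ in $B_\rho$ with $u=a\Phi+v$ in $B_\rho\setminus\{0\}$. Setting $w\coloneqq u-a\Phi$, I would argue that $w$ is $\mathcal{L}$-harmonic in $\mathbb{R}^n\setminus\{0\}$ as a linear combination of the $\mathcal{L}$-harmonic functions $u$ and $\Phi$ --- the weak formulation being linear in this combination because both $u$ and $\Phi$ lie in $W^{s,2}_{\mathrm{loc}}(\mathbb{R}^n\setminus\{0\})\cap L^1_{2s}(\mathbb{R}^n)$ (for $\Phi$ this is the bound $\Phi(x)\lesssim|x|^{2s-n}$ from Theorem~\ref{thm-FS}). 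Since $w=v$ a.e.\ in $B_\rho\setminus\{0\}$, after redefining $w(0)\coloneqq v(0)$ the function $w$ becomes a continuous weak solution of $\mathcal{L}w=0$ in all of $\mathbb{R}^n$, using that being a weak solution is a local property; that is, $w$ is $\mathcal{L}$-harmonic in $\mathbb{R}^n$.

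Next I would check that $w$ is bounded from above on $\mathbb{R}^n$: in $B_{R_0}^c$ one has $w=u-a\Phi\le u\le N$ since $a\Phi\ge 0$, while on the compact set $\overline{B}_{R_0}$ the continuous function $w$ is bounded. Thus $w$ meets the hypotheses of Lemma~\ref{lem-bdd} (it solves $\mathcal{L}w=0$ weakly in $\mathbb{R}^n\setminus\overline{B}_{1/2}$ --- indeed in all of $\mathbb{R}^n$ --- and is bounded from above on $\mathbb{R}^n$), so that lemma yields boundedness of $w$ in $\mathbb{R}^n\setminus B_2$. Finally $\Phi$ is bounded there, since $0\le\Phi(x)\lesssim|x|^{2s-n}\le 2^{2s-n}$ for $|x|\ge 2$, so $u=w+a\Phi$ is bounded in $\mathbb{R}^n\setminus B_2$, which is the assertion.

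The one genuinely nontrivial input is the decomposition $u=a\Phi+v$ across the singularity, i.e.\ Theorem~\ref{thm-Bocher}; granting that, the remaining steps are a routine use of linearity, continuity, and Lemma~\ref{lem-bdd}. The point that needs a little care is confirming that $w=u-a\Phi$ stays a weak solution across the origin after the redefinition $w(0)\coloneqq v(0)$, which follows from the locality of the notion of weak solution together with the membership of $u$ and $\Phi$ in $W^{s,2}_{\mathrm{loc}}(\mathbb{R}^n\setminus\{0\})\cap L^1_{2s}(\mathbb{R}^n)$.
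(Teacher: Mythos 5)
Your argument is correct, but it follows a genuinely different route from the paper's. You invoke the full B\^ocher decomposition of Theorem~\ref{thm-Bocher} to subtract the singular part $a\Phi$ outright, obtain an everywhere $\mathcal{L}$-harmonic, one-sidedly bounded function $w=u-a\Phi$, and then reduce to Lemma~\ref{lem-bdd}; the paper never uses Theorem~\ref{thm-Bocher} here. Instead it keeps the singularity and only extracts from Theorem~\ref{thm-iso-sing} the one-sided bound $u\geq -a\overline{\Phi}$ in $\overline{B}_1$, then runs a contradiction argument: if $\sup_{B_{\theta R}\setminus B_R}u$ were unbounded, the Harnack inequality on annuli (Lemma~\ref{lem-harnack}) would produce a radius $\overline{R}$ with $m(\overline{R})>\max\{2a,2M(R_0)\}$, and the barrier $m(\overline{R})(1-\overline{\Phi})$ then lies below $u$ on $(\overline{B}_{\theta\overline{R}}\setminus B_{\overline{R}})\cup\overline{B}_1$ precisely because $m(\overline{R})>2a$ absorbs the singular lower bound; the localized comparison principle (Corollary~\ref{cor-loc-comp}) then forces $M(R_0)\geq\tfrac12 m(\overline{R})$, a contradiction. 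Your reduction is shorter and conceptually cleaner given that Theorem~\ref{thm-Bocher} is already proved at this point (and there is no circularity: only the uniqueness part of Theorem~\ref{thm-FS} depends on the Liouville theorem), whereas the paper's proof needs only the weaker isolated singularity theorem and stays structurally parallel to Lemma~\ref{lem-bdd}. Two small points you should make explicit: first, the gluing step requires that the $v$ of Theorem~\ref{thm-Bocher} equal $u-a\Phi$ on all of $\mathbb{R}^n$, not merely in $\Omega$ --- this is indeed how $v$ is constructed in the paper's proof, but it matters because the nonlocal weak formulation of ``$v$ is $\mathcal{L}$-harmonic in $B_\rho$'' sees $v$ globally; second, Theorem~\ref{thm-Bocher} is stated for $\mathcal{L}$-harmonic (hence continuous) functions, so you should first pass to the continuous representative of the weak solution $u$, which is harmless.
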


\begin{proof}
We use the same notation as in the proof of Lemma~\ref{lem-bdd}.

We may assume that $u>0$ in $\mathbb{R}^n \setminus B_1$ and $u$ is bounded from above in $B_1$ by assumption and the local boundedness of $u$ in $\mathbb{R}^n \setminus \{0\}$. If $u$ is also bounded from below in $B_1$, then Lemma~\ref{lem-bdd} gives the conclusion. Thus, we assume that $u$ is not bounded from below in $B_1$. Then, Theorem~\ref{thm-iso-sing} and \eqref{eq-FS-bounds} show that there exists a constant $a>0$ such that
\begin{equation*}
    u(x) \geq -a\overline \Phi \quad \text{in $\overline B_1$}.
\end{equation*}
We now prove the desired upper bound of $u$ in $\mathbb{R}^n \setminus B_2$ by contradiction; we suppose that $\limsup_{R \to \infty}M(R)=\infty$. In particular, in view of Lemma~\ref{lem-harnack}, there exists a large radius $\overline{R}$ such that $m(\overline R) >\overline M \coloneqq \max\{2a, 2M(R_0)\}$.

We first observe that
\begin{equation*}
    u \geq m(\overline R)(1-\overline{\Phi}) \quad\text{in }(\overline{B}_{\theta \overline R} \setminus B_{\overline R}) \cup \overline{B}_1.
\end{equation*}
As in the proof of Lemma~\ref{lem-bdd}, we apply the localized comparison principle in Corollary~\ref{cor-loc-comp} to obtain that
\begin{equation*}
M(R_0) \geq \frac{1}{2}m(\overline R),
\end{equation*}
which contradicts the choice of $\overline R$. Therefore, we conclude that $\limsup_{R \to \infty}M(R)<\infty$. Combined with the local boundedness in $\mathbb{R}^n \setminus \{0\}$, this completes the proof.
\end{proof}

\begin{theorem}\label{thm-Liouv-weak}
If $u$ is a nonnegative weak solution of $\mathcal{L}u=0$ in $\mathbb{R}^n$, then $u$ must be constant.
\end{theorem}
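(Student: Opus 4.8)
The plan is to deduce the statement from the interior Harnack inequality for nonlocal equations, via the classical oscillation‑at‑infinity argument, after first upgrading the weak solution to a continuous one.

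First I would use the De Giorgi--Nash--Moser regularity theory for nonlocal equations (see \cite[Theorem~1.2]{DCKP16} or \cite[Theorem~6.4]{Coz17}) to replace $u$ by its locally H\"older continuous representative, so that $u$ is $\mathcal{L}$-harmonic in $\mathbb{R}^n$; this is legitimate since $u \in W^{s,2}_{\mathrm{loc}}(\mathbb{R}^n) \cap L^1_{2s}(\mathbb{R}^n)$. Then I set $m \coloneqq \inf_{\mathbb{R}^n} u \in [0,\infty)$ and $w \coloneqq u - m$, so that $w \geq 0$ in $\mathbb{R}^n$ and $\inf_{\mathbb{R}^n} w = 0$. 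The function $w$ is still a weak solution of $\mathcal{L}w=0$ in $\mathbb{R}^n$: constant functions belong to $L^1_{2s}(\mathbb{R}^n)$ (because $n+2s>n$) and satisfy $\mathcal{E}(c,\varphi)=0$ for every test function, while $\mathcal{E}$ is bilinear.

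Next I would apply the interior Harnack inequality for nonnegative weak solutions (\cite[Theorem~1.1]{DCKP14}; this is precisely the ingredient used to prove Lemma~\ref{lem-harnack}) to $w$ on each ball $B_R = B_R(0)$. Since $w \geq 0$ in all of $\mathbb{R}^n$, the excess tail term $\mathrm{Tail}(w_-;0,2R)$ on the right-hand side vanishes, so
\[
\sup_{B_R} w \leq C \inf_{B_R} w
\]
with a constant $C=C(n,s,\Lambda)>0$ independent of $R$ by scaling. Finally I would let $R\to\infty$: the map $R\mapsto \inf_{B_R} w$ is nonincreasing with limit $\inf_{\mathbb{R}^n} w = 0$, hence $\sup_{B_R} w \to 0$; for any fixed $x$ and all $R>|x|$ we have $0\leq w(x)\leq \sup_{B_R} w$, and letting $R\to\infty$ forces $w(x)=0$. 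Thus $w\equiv 0$ and $u\equiv m$ is constant.

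There is no genuinely hard step here: as in the local case, the Liouville theorem is essentially immediate from the Harnack inequality. The only point deserving attention is to invoke the correct form of the nonlocal Harnack inequality, in which the tail contribution is controlled by the negative part of the solution and therefore disappears for a globally nonnegative $u$; the remaining bookkeeping is merely to check that $w=u-m$ is still an admissible weak solution, i.e.\ that $w \in L^1_{2s}(\mathbb{R}^n)$ and that constants are annihilated by $\mathcal{L}$ in the weak sense.
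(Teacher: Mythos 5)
Your proof is correct, but it takes a genuinely different route from the paper. The paper first shows that $u$ is bounded on all of $\mathbb{R}^n$ — near infinity via Lemma~\ref{lem-bdd}, whose proof uses the fundamental solution $\Phi$, the localized comparison principle (Corollary~\ref{cor-loc-comp}), and the Harnack inequality on annuli (Lemma~\ref{lem-harnack}) — and then concludes with the scale-invariant H\"older estimate $[u]_{C^\alpha(B_R)} \leq C R^{-\alpha}\|u\|_{L^\infty(\mathbb{R}^n)}$, which forces the oscillation to vanish as $R \to \infty$. You instead run the classical ``Harnack implies Liouville'' argument on $w = u - \inf_{\mathbb{R}^n} u$: since $w \geq 0$ globally, the tail term $\mathrm{Tail}(w_-;\cdot,\cdot)$ in \cite[Theorem~1.1]{DCKP14} vanishes, the Harnack constant is scale-invariant, and $\inf_{B_R} w \to 0$ kills $\sup_{B_R} w$. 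This is more elementary and entirely bypasses the fundamental solution and the localized comparison principle; its only cost is the (harmless) bookkeeping that $w$ is still an admissible weak solution, which you address. The paper's detour is natural within its architecture because Lemma~\ref{lem-bdd} is needed anyway for Theorem~\ref{thm-sing-Liouv-L}, but as a standalone proof of this theorem yours is cleaner. One cosmetic point: the interior Harnack inequality compares $\sup$ and $\inf$ over a concentric subball of the ball where the equation holds, so you should apply it on, say, $B_{2R}$ to control $\sup_{B_R} w$ by $C\inf_{B_R} w$; since the equation holds in all of $\mathbb{R}^n$ this changes nothing.
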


\begin{proof}
Lemma~\ref{lem-bdd} shows that $u$ is bounded in $\mathbb{R}^n \setminus B_2$. Moreover, by Theorem~1.1 in Di Castro--Kuusi--Palatucci~\cite{DCKP16}, $u$ is locally bounded in $\mathbb{R}^n$. Therefore, $u \in L^\infty(\mathbb{R}^n)$. Now, it follows from the uniform H\"older estimate (see, e.g.\ Kassmann~\cite[Theorem~1.1]{Kas09}) that
\begin{equation*}
    [u]_{C^\alpha(B_R)} \leq \frac{C}{R^\alpha} \|u\|_{L^\infty(\mathbb{R}^n)},
\end{equation*}
where $\alpha \in (0,1)$ and $C>0$ are constants depending only on $n$, $s$, and $\Lambda$. Passing to the limit $R \to 0$ concludes that $u$ is constant.
\end{proof}

\begin{proof}[Proof of Theorem~\ref{thm-sing-Liouv-L}]
If the singularity at the origin is removable, then $u$ is $\mathcal{L}$-harmonic in $\mathbb{R}^n$. Since $u$ is locally bounded in $\mathbb{R}^n$ and bounded from one side in a neighborhood of infinity, it is bounded from one side in $\mathbb{R}^n$. Therefore, Theorem~\ref{thm-Liouv-weak} concludes that $u$ is constant.

Suppose that the singularity at the origin is not removable. Then it follows from Theorem~\ref{thm-Bocher} that $u=a\Phi+v$ for some constant $a \neq 0$ and an $\mathcal{L}$-harmonic function $v$ in a neighborhood of the origin. Moreover, Lemmas~\ref{lem-bdd} and \ref{lem-bdd2} show that
\begin{equation*}
    b^+\coloneqq \limsup_{|x| \to \infty}u(x) \quad \text{and}\quad b^- \coloneqq \liminf_{|x| \to \infty}u(x)
\end{equation*}
exist as finite values.

Let $\varepsilon>0$. Then there exists $\delta \in (0,1)$ such that
\begin{equation*}
(1-\mathrm{sign}(a)\varepsilon)a \Phi + b^- - \varepsilon \leq u \leq (1+\mathrm{sign}(a)\varepsilon)a \Phi + b^+ + \varepsilon \quad\text{in }\overline{B}_\delta \cup B_{1/\delta}^c.
\end{equation*}
Since $u$ and $(1\pm\mathrm{sign}(a)\varepsilon)a\Phi+b^\pm \pm \varepsilon$ are $\mathcal{L}$-harmonic in $B_{1/\delta} \setminus \overline{B}_\delta$, the standard maximum principle in Theorem~\ref{thm-MP} shows that
\begin{equation*}
(1-\mathrm{sign}(a)\varepsilon)a \Phi + b^- - \varepsilon \leq u \leq (1+\mathrm{sign}(a)\varepsilon)a \Phi + b^+ + \varepsilon \quad\text{in }\mathbb{R}^n.
\end{equation*}
Passing to the limit $\varepsilon \to 0$ yields
\begin{equation*}
    a \Phi + b^- \leq u \leq a \Phi + b^+ \quad\text{in }\mathbb{R}^n.
\end{equation*}
But then Theorem~\ref{thm-iso-sing} shows that $b\coloneqq u-a\Phi$ is $\mathcal{L}$-harmonic in $\mathbb{R}^n$. Therefore, the Liouville Theorem~\ref{thm-Liouv-weak} shows that $b$ is constant.
\end{proof}


\end{document}